\documentclass[20pt,reqno]{amsart}  
\usepackage{amsfonts}
\usepackage{graphicx}
\usepackage{amscd}
\usepackage{color}
\usepackage{amsmath}
\usepackage{amsfonts,amssymb,amsthm,mathrsfs,xypic,cite,leftidx,enumitem,comment}
\xyoption{curve}
\usepackage{fancybox}
\oddsidemargin=10pt \evensidemargin=6pt
\usepackage{tikz}
\usepackage{tikz-cd}
\usetikzlibrary{arrows,decorations.pathmorphing,backgrounds,positioning,fit,petri}
\usetikzlibrary{arrows,patterns,matrix}
\usetikzlibrary{decorations.markings}
\usepackage[all,cmtip]{xy}
\usepackage{caption}
\usepackage{subcaption}

\newcommand{\m}{\Lambda}
\newcommand{\Hom}{\operatorname{Hom}}
\newcommand{\End}{\operatorname{End}}
\newcommand{\Aut}{\operatorname{Aut}}
\newcommand{\Ker}{\operatorname{Ker}}
\newcommand{\cok}{\operatorname{Coker}}
\newcommand{\Ima}{\operatorname{Im}}
\newcommand{\rad}{\operatorname{rad}}
\newcommand{\Ext}{\operatorname{Ext}}

\newcommand{\op}{\operatorname{op}}

\newcommand{\Gp}{{\rm Gproj}}

\newcommand{\smon}{{\rm smon}}

\newcommand{\modd}{\textendash\operatorname{mod}}
\newcommand{\pmodd}{\textendash\underline{\operatorname{mod}}}

\newcommand{\Mod}{\textendash\operatorname{Mod}}

\newcommand{\Mimo}{\operatorname{\bf Mimo}}
\newcommand{\Mepi}{\operatorname{\bf Mepi}}
\newcommand{\mimo}{\operatorname{\rm mimo}}
\newcommand{\mepi}{\operatorname{\rm mepi}}
\newcommand{\bKer}{\operatorname{\bf Ker}}
\newcommand{\bcok}{\operatorname{\bf Cok}}
\newcommand{\Rot}{\operatorname{\bf Rot}}
\newcommand{\Htp}{{\rm Htp}}
\newcommand{\Tr}{\operatorname{Tr}}

\newcommand{\calA}{{\mathcal A}}

\newcommand{\calC}{{\mathcal C}}
\newcommand{\calD}{{\mathcal D}}
\newcommand{\calE}{{\mathcal E}}
\newcommand{\calF}{{\mathcal F}}

\newcommand{\calH}{{\mathcal H}}
\newcommand{\calI}{{\mathcal I}}

\newcommand{\cP}{{\mathcal P}}

\newcommand{\calS}{{\mathcal S}}
\newcommand{\calT}{{\mathcal T}}

\newcommand{\calV}{{\mathcal V}}

\newcommand{\bS}{{\mathbb S}}
\newcommand{\rt}{\rightarrow}

\newcommand{\la}{\langle}
\newcommand{\ra}{\rangle}
\newcommand{\st}{\stackrel}

\newtheorem{thm}{Theorem}[section]

\newtheorem{cor}[thm]{Corollary}

\newtheorem{propdef}[thm]{Proposition-Definition}
\newtheorem{lem}[thm]{Lemma}
\newtheorem{exm}[thm]{Example}

\newtheorem{prop}[thm]{Proposition}
\newtheorem{rem}[thm]{Remark}
\newtheorem{defn}[thm]{Definition}
\newtheorem{constr}[thm]{Construction}
\textwidth 15.5true cm \textheight 21.6true cm

\begin{document}

\title [AR translation in the monomorphism categories of exact categories]
{Auslander-Reiten translations in the monomorphism categories of exact categories}
\author [Xiu-Hua Luo, Shijie Zhu ] {Xiu-Hua Luo, Shijie Zhu$^*$}
\thanks{$^*$The corresponding author.}

\thanks{Supported by the NSF of China (No. 12201321).}
\thanks{xiuhualuo$\symbol{64}$ntu.edu.cn  \ \ \ \ shijiezhu$\symbol{64}$ntu.edu.cn}

 \maketitle

\begin{center}
School of Mathematics and Statistics, \ \ Nantong University\\
Jiangsu 226019, P. R. China
\end{center}

\begin{abstract} Let $\calV$ be a dualizing $k$-variety over a field $k$ and $\calC$ be a functorially finite exact subcategory of $\calV\modd$ having enough $\Ext$-projective and $\Ext$-injective objects. Let  $\mathcal H (\mathcal C)$ (resp. $\mathcal S (\mathcal C)$ and $\mathcal F (\mathcal C)$) be its morphism (resp. monomorphism and epimorphism) category. It turns out that these categories have almost split sequences. We show an explicit formula for the Auslander-Reiten translation in $\mathcal S (\mathcal C)$. Furthermore, if $\mathcal C$ is a stably $d$-Calabi-Yau Frobenius category, we calculate objects under powers of Auslander-Reiten translation in the triangulated category $\overline{\mathcal S(\mathcal C)}$.
\vskip5pt

{\it Key words and phrases.  \ (mono)morphism categories, \ exact categories, \ Auslander-Reiten translation, \ Auslander-Reiten-Serre duality, \ functor categories.}

{\it MSC2020: 16G70, 18A20, 18A25, 16B50.}
\end{abstract}

\section {Introduction}

Throughout the paper, $k$ is a field and $\Lambda$ is a finite dimensional algebra over $k$. Denote by $D=\Hom_k(-,k)$ the $k$-dual functor.

\subsection{Background}

The classification of subobjects is a fundamental problem in mathematics. In  \cite{Birk}, G. Birkhoff proposed the problem of classifying all
subgroups of abelian $p$-groups by matrices. This question is related to representations of partial order sets, such as \cite{Ar,Si1,Si2}. C. M. Ringel and M. Schmidmeier \cite{RS1,RS2,RS3 } revisited this subject using a modern point of view by studying the representation type and the Auslander-Reiten theory of the submodule category for artin algebras $k[x]/(x^n)$, for $n>1$. In \cite{RS2}, they developed the Auslander-Reiten theory in the submodule category for an artin algebra and showed that the Auslander-Reiten translation in the submodule category $\calS(\Lambda)$ can be computed by the Auslander-Reiten translation within the module category.
\begin{thm}\label{thm:RS}
Let $\Lambda$ be an artin algebra. Then the submodule category $\calS(\m)$ has almost split sequences and the Auslander-Reiten translation in $\calS(\m)$ satisfies
\begin{align}
\tau_\calS f= \Mimo\tau_\Lambda \cok f
\end{align}
where $\Mimo$ denotes the minimal monomorphism. 
\end{thm}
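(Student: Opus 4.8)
The plan is to pass to the morphism category and transport the Auslander--Reiten theory through the cokernel functor $\cok$.

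First I would realize $\calS(\m)$ as the full subcategory of monomorphisms inside the morphism category $H(\m)$, which is equivalent to $T_2(\m)\modd$ for the triangular matrix algebra $T_2(\m)=\left(\begin{smallmatrix}\m&0\\\m&\m\end{smallmatrix}\right)$. A direct check shows $\calS(\m)$ is closed under direct summands, extensions and kernels of epimorphisms in $H(\m)$ and contains the indecomposable projective $T_2(\m)$-modules $(0\to P)$ and $(P\xrightarrow{1}P)$; writing down right and left $\calS$-approximations of a map $X\xrightarrow{h}Y$ shows it is functorially finite. By the Auslander--Smal{\o} existence theorem for functorially finite extension-closed subcategories, $\calS(\m)$ has almost split sequences; write $\tau_{\calS}$ for its translation. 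Moreover $\calS(\m)$ has enough projective and injective objects, the indecomposable ones being the $(0\to P)$, $(P\xrightarrow{1}P)$ with $P$ projective and, dually, the $(0\to I)$, $(I\xrightarrow{1}I)$ with $I$ injective.

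Next I would use $\cok\colon\calS(\m)\to\m\modd$, $(A\hookrightarrow B)\mapsto B/A$. By the $3\times3$-lemma it is exact on the short exact sequences of $\calS(\m)$, and it sends relative projectives to projective $\m$-modules ($\cok(0\to P)=P$, $\cok(P\xrightarrow{1}P)=0$). With the adjunction $\Hom_{\calS}(-,(0\to M))\cong\Hom_{\m}(\cok(-),M)$ this gives natural isomorphisms $\Ext^i_{\calS}(f,(0\to M))\cong\Ext^i_{\m}(\cok f,M)$; dually, from injective coresolutions $(M\xrightarrow{1}M)\hookrightarrow(I^{\bullet}\xrightarrow{1}I^{\bullet})$ one gets $\Ext^i_{\calS}(f,(M\xrightarrow{1}M))\cong\Ext^i_{\m}(B,M)$ for $f=(A\hookrightarrow B)$. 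Feeding these, via the canonical short exact sequence $0\to(Z_1\xrightarrow{1}Z_1)\to Z\to(0\to\cok Z)\to 0$ attached to any $Z=(Z_1\hookrightarrow Z_2)$, into a relative Auslander--Reiten--Serre duality $\Ext^1_{\calS}(f,Z)\cong D\,\underline{\Hom}_{\calS}(Z,\tau_{\calS}f)$ for $\calS(\m)$, together with the classical duality $\Ext^1_{\m}(-,-)\cong D\,\underline{\Hom}_{\m}(-,\tau_{\m}(-))$ on $\m\modd$, I would compare representing objects and conclude that $\cok$ intertwines $\tau_{\calS}$ with $\tau_{\m}$ up to normalization, i.e.\ $\cok(\tau_{\calS}f)\cong\tau_{\m}(\cok f)$.

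To finish, recall that $\Mimo(M)$ is the minimal monomorphism with cokernel $M$ --- obtained from any monomorphism with cokernel $M$, say $(\Omega M\hookrightarrow P_0)$ for a projective cover $P_0\to M$, by deleting the superfluous direct summands --- so that $\cok\circ\Mimo=\mathrm{id}_{\m\modd}$, and $\Mimo(M)$ is indecomposable and non-$\calS$-injective when $M$ is indecomposable non-projective. For $f$ indecomposable, non-$\calS$-projective and equal to $\Mimo(\cok f)$ with $\cok f$ non-projective (the remaining indecomposables of $\calS(\m)$ --- the relative projectives $(0\to P)$, the objects $(Q\xrightarrow{1}Q)$ and $(0\to Q)$ --- being treated separately), the object $\tau_{\calS}f$ is indecomposable, non-$\calS$-injective and again a minimal monomorphism, hence equals the minimal monomorphism with its own cokernel, and the previous step yields $\tau_{\calS}f\cong\Mimo(\cok\tau_{\calS}f)\cong\Mimo(\tau_{\m}\cok f)$. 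Alternatively one may bypass the functorial argument and build the almost split sequence ending in $f$ directly, as the $\Mimo$-cleaned pullback of the almost split sequence $0\to\tau_{\m}N\to E\to N\to 0$ (with $N=\cok f$) along the canonical map $f\to(0\to N)$ on codomains, checking it is almost split via Auslander's criterion (the left term has local endomorphism ring and the sequence is non-split) --- this is precisely what forces $\Mimo$ to appear. The main obstacle is two-fold: setting up the relative Auslander--Reiten--Serre machinery for the non-abelian category $\calS(\m)$ (the relative-projective/injective duality and the formula $\Ext^1_{\calS}(f,Z)\cong D\,\underline{\Hom}_{\calS}(Z,\tau_{\calS}f)$), and then the bookkeeping needed to extract the exact statement: controlling the possibly higher $\Ext$-contributions of the summands $(Z_1\xrightarrow{1}Z_1)$ when $\m$ is not hereditary, isolating the part of $\calS(\m)$ made of minimal monomorphisms on which the formula is clean, and fixing the normalization of $\Mimo$ so that $\tau_{\calS}f$ lands in its image rather than differing by a relative-projective summand.
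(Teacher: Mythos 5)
Your existence argument for part (1) --- realizing $\calS(\m)$ as a functorially finite, extension-closed, summand-closed subcategory of $\calH(\m)\cong T_2(\m)\modd$ and invoking Auslander--Smal\o{} --- is correct and is essentially the route the paper takes (Theorem \ref{thm:sc_ff}). The formula, however, you have misread, and the gap is fatal. In the statement, $\cok f$ is not the module $B/A$ but the cokernel \emph{morphism} $B\to B/A$ (an object of $\calF(\m)\subseteq\calH(\m)$); $\tau_\Lambda$ is applied to that morphism through the stable category, giving a map $\tau_\Lambda B\to \tau_\Lambda(B/A)$; and $\Mimo$ of a morphism $g:X\to Y$ means $[g,e]^T:X\to Y\oplus I\Ker g$ with $e$ extending the injective envelope of $\Ker g$ --- it is not ``the minimal monomorphism with cokernel $M$'', and $\cok\circ\Mimo$ is not the identity on $\Lambda\modd$. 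With your reading, $\tau_\calS f$ would depend only on the module $B/A$, which is impossible: $\tau_\calS$ is injective on iso-classes of non-projective indecomposables, while for $\Lambda=k[x]/(x^3)$ the objects $(\rad\Lambda\subseteq\Lambda)$ and $(S\subseteq \Lambda/(x^2))$ are non-isomorphic non-projective indecomposables of $\calS(\m)$ with the same cokernel module $S$, so no formula of the shape $\tau_\calS f\cong\Phi(B/A)$ can hold. Concretely, for $\Lambda=k[x]/(x^2)$ and $f=(S\subseteq\Lambda)$ the correct translate is $\tau_\calS f\cong(0\to S)$, whereas your recipe returns $\Mimo(\tau_\Lambda S)=(\Omega S\subseteq\Lambda)=(S\subseteq\Lambda)$.

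Consequently the two steps carrying your formula do not stand. The asserted intertwining $\cok(\tau_\calS f)\cong\tau_\Lambda(\cok f)$ is false in general (for $f=(0\to M)$ with $M$ indecomposable non-projective one has $\tau_\calS f\cong(\tau_\Lambda M\xrightarrow{1}\tau_\Lambda M)$, whose cokernel is $0$), and it is never actually derived: ``comparing representing objects'' in a relative Auslander--Reiten--Serre duality via the filtration $0\to(Z_1\xrightarrow{1}Z_1)\to Z\to(0\to\cok Z)\to 0$ is precisely the hard point, and since $\cok$ collapses many non-isomorphic indecomposables it cannot by itself pin down $\tau_\calS f$. Likewise the classification underlying your final step --- that every indecomposable of $\calS(\m)$ other than $(0\to P)$, $(Q\xrightarrow{1}Q)$, $(0\to Q)$ is ``the minimal monomorphism with its own cokernel'' --- fails already for $k[x]/(x^3)$ by the example above. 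Note finally that the paper does not reprove this theorem (it is quoted from \cite{RS2}); its generalization, Theorem \ref{thm:A}, is proved by a different mechanism --- transferring cw-exact almost split sequences through the stable functor $\calF(\calC)\to\calH(\underline\calC)$ and the functor $\Theta:\calH(\underline\calC)\to\underline\calC\modd$, then computing $D\Tr$ in the functor category --- exactly because the translate must remember the morphism $B\to B/A$, not merely its target module.
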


This paper is devoted to study the morphism categories $\calH(\calC)$, respectively monomorphism categories $\calS(\calC)$ and epimorphism categories $\calF(\calC)$ of exact categories $\calC$, where the objects are morphism, respectively inflations and deflations in $\calC$. It turns out that there have been fruitful applications from this point of view. In \cite{LiZ}, the authors showed the following:
\begin{thm}\label{thm:t2a}
    For a Gorenstein algebra $\Lambda$, the category of Gorenstein projective modules over the matrix algebra $T_2(\Lambda)=\begin{bmatrix}
    \Lambda &\Lambda \\0&\Lambda 
\end{bmatrix}$ is $\Gp(T_2(\Lambda))\cong \calS(\Gp(\Lambda))$.
\end{thm}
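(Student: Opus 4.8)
The plan is to convert the statement into homological algebra over $\Lambda$ via the standard equivalence $T_2(\Lambda)\modd\simeq\calH(\Lambda\modd)$, under which a left $T_2(\Lambda)$-module is a $\Lambda$-homomorphism $f\colon X\to Y$, the indecomposable projectives are the objects $(0\to P)$ and $(P\xrightarrow{1}P)$ with $P$ projective, and the indecomposable injectives are $(I\to 0)$ and $(I\xrightarrow{1}I)$ with $I$ injective. The first task is to record that $T_2(\Lambda)$ is again Gorenstein: the evaluation functors $(-)_1,(-)_2\colon T_2(\Lambda)\modd\to\Lambda\modd$ are exact, and $(-)_1$ (resp.\ $(-)_2$) is left adjoint to the exact functor $(-\to 0)$ (resp.\ $(-\xrightarrow{1}-)$), so, as a right adjoint of an exact functor preserves injectives, the functors $(-\to 0)$ and $(-\xrightarrow{1}-)$ send a finite injective coresolution of ${}_\Lambda\Lambda$ (available since $\Lambda$ is Gorenstein) to finite injective coresolutions of $(\Lambda\to 0)$ and $(\Lambda\xrightarrow{1}\Lambda)$; combining these with the conflation $0\to(0\to\Lambda)\to(\Lambda\xrightarrow{1}\Lambda)\to(\Lambda\to 0)\to 0$ gives $\id_{T_2(\Lambda)}T_2(\Lambda)<\infty$ on the left, and the symmetric computation (using $T_2(\Lambda)^{\op}\cong T_2(\Lambda^{\op})$) does so on the right. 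Hence throughout I may use the identifications $\Gp(T_2(\Lambda))=\{M:\Ext^{>0}_{T_2(\Lambda)}(M,T_2(\Lambda))=0\}$ and $\Gp(\Lambda)={}^{\perp}\Lambda$.

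Next I would carry out the relevant $\Ext$-computations. From the adjunctions above one obtains, for $M=(X\xrightarrow{f}Y)$ and any $\Lambda$-module $B$, natural isomorphisms $\Ext^i_{T_2(\Lambda)}(M,(B\xrightarrow{1}B))\cong\Ext^i_\Lambda(Y,B)$ and $\Ext^i_{T_2(\Lambda)}(M,(B\to 0))\cong\Ext^i_\Lambda(X,B)$, compatible with the maps induced by $f$ on $\Hom$ and $\Ext$. Feeding the conflation $0\to(0\to\Lambda)\to(\Lambda\xrightarrow{1}\Lambda)\to(\Lambda\to 0)\to 0$ into $\Ext^{*}_{T_2(\Lambda)}(M,-)$ and substituting these isomorphisms, a short chase of the resulting long exact sequence yields the criterion: $M=(X\xrightarrow{f}Y)$ belongs to $\Gp(T_2(\Lambda))$ if and only if $X,Y\in\Gp(\Lambda)$ and $f^{*}\colon\Hom_\Lambda(Y,\Lambda)\to\Hom_\Lambda(X,\Lambda)$ is surjective.

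It then remains to match this criterion with membership in $\calS(\Gp(\Lambda))$, i.e.\ with the condition that $f$ be an inflation of the exact category $\Gp(\Lambda)$ — a monomorphism with $X,\cok f\in\Gp(\Lambda)$. In one direction: a Gorenstein projective module embeds into a projective one, and the structure map of any projective $T_2(\Lambda)$-module (a direct sum of objects $(0\to P)$ and $(Q\xrightarrow{1}Q)$ with $P,Q$ projective) is a monomorphism, so an embedding $M\hookrightarrow(A\xrightarrow{g}B)$ into a projective yields a monomorphism $X\to A\xrightarrow{g}B$ that factors through $f$, forcing $f$ to be a monomorphism; then $X,Y\in\Gp(\Lambda)$ by the criterion, and applying $\Hom_\Lambda(-,\Lambda)$ to $0\to X\xrightarrow{f}Y\to\cok f\to 0$ together with the surjectivity of $f^{*}$ gives $\Ext^{>0}_\Lambda(\cok f,\Lambda)=0$, i.e.\ $\cok f\in\Gp(\Lambda)$. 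Conversely, if $f$ is a monomorphism with $X,\cok f\in\Gp(\Lambda)$, then $Y\in\Gp(\Lambda)$ since $\Gp(\Lambda)$ is closed under extensions, and the long exact sequence of $\Hom_\Lambda(-,\Lambda)$ applied to $0\to X\to Y\to\cok f\to 0$ shows $f^{*}$ surjective, so $M\in\Gp(T_2(\Lambda))$ by the criterion. This gives the equivalence on objects; one then checks routinely that it matches the (Frobenius) exact structures, as a short exact sequence of $T_2(\Lambda)$-modules with all terms Gorenstein projective corresponds precisely to a componentwise conflation in $\Gp(\Lambda)$, i.e.\ to a conflation of $\calS(\Gp(\Lambda))$. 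Hence $\Gp(T_2(\Lambda))\cong\calS(\Gp(\Lambda))$.

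The main point requiring care is this setup — establishing that $T_2(\Lambda)$ is Gorenstein and organizing the adjunction-driven $\Ext$-computations — since these are precisely what make the homological criterion available; once they are in place, the two inclusions are short. I would also note that the Gorenstein hypothesis on $\Lambda$ enters only through the identity $\Gp(R)=\{M:\Ext^{>0}_R(M,R)=0\}$, valid for any Iwanaga-Gorenstein algebra $R$, applied to $R=\Lambda$ and to $R=T_2(\Lambda)$; for an arbitrary finite dimensional algebra the same object-level description of $\Gp(T_2(\Lambda))$ still holds, but the argument must then be replaced by the explicit construction, via a horseshoe argument, of a totally acyclic complex of projective $T_2(\Lambda)$-modules glued from complete resolutions of $X$ and $\cok f$, together with a direct verification of its $\Hom_{T_2(\Lambda)}(-,T_2(\Lambda))$-acyclicity — the technically heaviest route.
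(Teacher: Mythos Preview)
Your argument is correct and well organized. The adjunction setup, the $\Ext$-criterion $M=(X\xrightarrow{f}Y)\in\Gp(T_2(\Lambda))\Leftrightarrow X,Y\in{}^{\perp}\Lambda$ and $f^{*}$ surjective, and the identification of this with the condition ``$f$ is an inflation in $\Gp(\Lambda)$'' all go through as you describe. The one small point to tighten is in the forward direction: to conclude that $f$ is monic you invoke an embedding of $M$ into a projective $T_2(\Lambda)$-module, which uses the genuine definition of Gorenstein projectivity rather than only the condition $\Ext^{>0}_{T_2(\Lambda)}(M,T_2(\Lambda))=0$; this is fine, since you have already established that $T_2(\Lambda)$ is Gorenstein and hence the two descriptions coincide, but it is worth flagging that this step is where that equivalence is actually used.

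As for comparison with the paper: there is nothing to compare. This theorem is quoted in the introduction as a result of Li--Zhang \cite{LiZ} and is not proved in the paper; it serves purely as motivation for studying $\calS(\calC)$ for exact categories $\calC$. Your proof is essentially the original Li--Zhang argument specialized to the Gorenstein case (their paper works over arbitrary artin algebras and builds the complete resolution directly, along the lines you sketch in your final paragraph), so you have reconstructed the cited result rather than a proof the present paper supplies.
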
 

In this case, it is well-known that $\calS(\Gp(\Lambda))$ is a functorially finite subcategory of $T_2(\Lambda)\modd$, and hence it has almost split sequences.
In \cite{LZ3}, the authors   showed a similar result about the existence of almost split sequences in the monomorphism categories of complete hereditary cotorsion classes. 
\begin{thm}
If $\calC$ is a contravariantly finite resolving subcategory in $\m\modd$, then so is $\calS(\calC)$ in $T_2(\Lambda)\modd$ and hence it has almost split sequences.
\end{thm}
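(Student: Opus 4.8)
The plan is to show that $\calS(\calC)$ is a functorially finite resolving subcategory of $T_2(\m)\modd$, the existence of almost split sequences then being a consequence of the Auslander--Smal\o\ theory of almost split sequences in functorially finite subcategories closed under extensions. Throughout I identify $T_2(\m)\modd$ with the morphism category of $\m\modd$, whose objects are the $\m$-homomorphisms $M=(M_1\st{f_M}{\lrt}M_0)$; under this identification $\calS(\calC)$ is the full subcategory of those $M$ for which $f_M$ is a monomorphism and each of $M_1$, $M_0$, $\cok f_M$ lies in $\calC$, while the projective objects of $T_2(\m)$ are the direct sums of the $(0\rt P)$ and $(P\st{1}{\rt}P)$ with $P\in\Proj\m$.

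First I would verify that $\calS(\calC)$ is resolving. It contains the projectives of $T_2(\m)$ since $\Proj\m\subseteq\calC$. For the remaining axioms, given a short exact sequence $0\rt M'\rt M\rt M''\rt 0$ in $T_2(\m)\modd$ one passes to the two component sequences and, via the snake lemma, to the induced short exact sequence of cokernels of the structure maps; this reduces closure of $\calS(\calC)$ under extensions, under kernels of epimorphisms, and under direct summands to the corresponding closure properties of the resolving subcategory $\calC\subseteq\m\modd$. This step is routine diagram-chasing.

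The heart of the matter is the contravariant finiteness of $\calS(\calC)$. Because $\calC$ is contravariantly finite and resolving, Wakamatsu's lemma furnishes, for each $\m$-module $N$, a special right $\calC$-approximation $0\rt L_N\rt C_N\st{\xi_N}{\lrt}N\rt 0$ with $C_N\in\calC$ and $\Ext^1_\m(\calC,L_N)=0$; here $\xi_N$ is surjective because a projective cover of $N$ factors through it. Given $M=(M_1\st{f}{\lrt}M_0)$, I would manufacture a right $\calS(\calC)$-approximation by approximating the codomain together with a pullback: take $\xi_{M_0}\colon C_{M_0}\rt M_0$, form the pullback $Q=M_1\times_{M_0}C_{M_0}$, so that the projection $Q\rt M_1$ is surjective with kernel $L_{M_0}$; take $\xi_Q\colon C_Q\rt Q$; let $g\colon C_Q\rt C_{M_0}$ be the induced map; and pass to the split monomorphism $\binom{g}{1}\colon C_Q\rt C_{M_0}\oplus C_Q$. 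This yields an object $X\in\calS(\calC)$ with cokernel $C_{M_0}$ together with an evident morphism $X\rt M$. To prove that $X\rt M$ (or a suitable refinement of it) is a right $\calS(\calC)$-approximation one must lift, for every $Y=(Y_1\st{j}{\lrt}Y_0)\in\calS(\calC)$, an arbitrary morphism $Y\rt M$ through $X$; the tools are the memberships $Y_1,Y_0,\cok j\in\calC$ and the vanishing $\Ext^1_\m(\calC,\calC^{\perp_1})=0$, inserted into the long exact $\Hom$-sequences attached to $0\rt Y_1\st{j}{\lrt}Y_0\rt\cok j\rt 0$.

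I expect this lifting step to be the main obstacle. The construction sketched above, applied naively, forces one to extend a homomorphism from a subobject of $Y$ into the \emph{un-approximated} module $M_0$ (or $M_1$), which is in general impossible; the construction must therefore be iterated, or the right $\calC$-approximations of the several invariants of $M$ (including that of $\cok f$) glued so that every obstruction class one encounters lands in a group of the form $\Ext^1_\m(\calC,\calC^{\perp_1})$ and hence vanishes. Once the contravariant finiteness of the extension-closed subcategory $\calS(\calC)$ is in place, one likewise obtains covariant finiteness; thus $\calS(\calC)$ is functorially finite, and the Auslander--Smal\o\ theorem delivers the almost split sequences in $\calS(\calC)$.
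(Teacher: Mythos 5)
Your global strategy (show that $\calS(\calC)$ is resolving and contravariantly finite in $T_2(\m)\modd\cong\calH(\m)$ and then invoke the approximation theory of almost split sequences) is the right one, and the resolving part is indeed a routine snake-lemma check; but the proposal has a genuine gap at exactly the point you yourself flag as ``the main obstacle'': no right $\calS(\calC)$-approximation is ever produced, and the object you do construct is not one. Concretely, any morphism $(\beta_1,\beta_0)$ from $(Y_1\st{j}\lrt Y_0)$ to your $X=\bigl(C_Q\st{[g,1]^T}\lrt C_{M_0}\oplus C_Q\bigr)$ satisfies $\beta_1=\beta_0''\,j$, where $\beta_0''$ is the second component of $\beta_0$; hence after composing with your map $X\to M$ (whose top component is $C_Q\to Q\to M_1$) the top component $\alpha_1$ of the resulting morphism always factors through $j$. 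Taking $Y=M\in\calS(\calC)$ and $\alpha=\mathrm{id}$ shows that the identity of $M$ cannot factor through $X\to M$ unless $f$ itself is a split monomorphism, so no cleverness in the lifting step can rescue this $X$, and the promised ``iteration or gluing'' is precisely the missing proof. The vanishing $\Ext^1_\m(\calC,\calC^{\perp_1})=0$ cannot help, because the extension problem one actually meets has target $C_Q\in\calC$, not an object of $\calC^{\perp_1}$. The closing claim that covariant finiteness follows ``likewise'' is also unsupported (the hypotheses are not self-dual); for the last clause of the statement one needs instead the known facts that a contravariantly finite resolving subcategory is functorially finite (Krause--Solberg) and that functorially finite extension-closed subcategories have almost split sequences \cite{AS}.

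The missing ingredient is the relative Mimo construction, which is how the present paper handles its analogue (Theorem \ref{thm:sc_ff}), by splitting the problem into the two inclusions $\calS(\calC)\subseteq\calH(\calC)\subseteq\calH(\m)$. Your pullback step is exactly the proof that $\calH(\calC)$ is contravariantly finite in $\calH(\m)$ (the first lemma of Section 4): the object $(C_Q\st{g}\lrt C_{M_0})$ together with the evident map is a right $\calH(\calC)$-approximation of $M$. What you are missing is the second step, Lemma \ref{lem:mimo_contra}: for $g\colon C_Q\to C_{M_0}$ in $\calH(\calC)$ choose an inflation $e\colon C_Q\to W$ into an $\Ext$-injective object $W$ of $\calC$ and use $[g,e]^T\colon C_Q\to C_{M_0}\oplus W$, not $[g,1]^T$. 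A morphism from $(Y_1\st{j}\lrt Y_0)\in\calS(\calC)$ then factors because $e\phi_1$ extends along the inflation $j$, the obstruction lying in $\Ext^1_\m(\cok j,W)=0$ since $\cok j\in\calC$ and $W\in\calC^{\perp_1}$; this is exactly the extension your summand $C_Q$ cannot absorb. Under your hypotheses such $W$ exist: by Wakamatsu's lemma and Salce's argument $(\calC,\calC^{\perp_1})$ is a complete cotorsion pair, so every object of $\calC$ admits an exact sequence $0\to C\to W\to C'\to 0$ with $C'\in\calC$ and $W\in\calC\cap\calC^{\perp_1}$, i.e.\ $\calC$ has enough $\Ext$-injectives. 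Composing the two approximations and using transitivity gives contravariant finiteness of $\calS(\calC)$ in $T_2(\m)\modd$, and together with your resolving check and the cited results this completes the proof; note that the paper itself does not reprove this statement but quotes it from \cite{LZ3}, its own Section 4 treating the functorially finite case by precisely this two-step mechanism.
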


\subsection{Main results}
 Motivated by these  results above, we will focus on studying the existence of almost split sequences in the monomorphism category of an exact category  $\calC$ and their Auslander-Reiten translations in this paper. We will show an explicit formula for the Auslander-Reiten translation in the monomorphism category $\calS (\calC)$ and the epimorphism category $\calF(\calC)$.

\begingroup 
\setcounter{thm}{0}
\renewcommand\thethm{\Alph{thm}}

\begin{thm}\label{thm:A}
Let $\calV$ be a dualizing $k$-variety and $\calC$ be a functorially finite exact subcategory of $\calV\modd$ having enough   $\Ext$-projective and $\Ext$-injective objects. Then the followings hold:
\begin{enumerate}
\item The monomorphism category $\calS(\calC)$ and the epimorphism category $\calF(\calC)$ have almost split sequences.
\item Let $\tau_\calC$ denote the Auslander-Reiten translation of $\calC$, then the Auslander-Reiten translations of $\calS(\calC)$ and $\calF(\calC)$  
are given by:
$$
\tau_\calS(f)\cong\Mimo\tau_\calC \underline{\bcok(f)}, \ \ \tau_\calF(f)\cong\bcok\Mimo\tau_\calC(\underline f),$$
where  $\underline{\bcok (f)}$ and $\underline f$ are the images of the maps in the stable category $\underline \calC$.
\end{enumerate}
\end{thm}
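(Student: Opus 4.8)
The plan is to realize $\calS(\calC)$ and $\calF(\calC)$ as exact subcategories of a module category, settle the existence of almost split sequences by approximation theory, and then pin down $\tau_\calS$ by combining the cokernel functor with the Auslander-Reiten-Serre duality of $\calC$.

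First I would identify $\calH(\calC)$ with the full subcategory of $\calH(\Lambda\modd)\cong T_2(\Lambda)\modd$ on the pairs $(X\xrightarrow{f}Y)$ with $X,Y\in\calC$. Since $\calC$ is an exact, hence extension-closed, subcategory of $\Lambda\modd$, the subcategories $\calH(\calC)$, $\calS(\calC)$ and $\calF(\calC)$ are extension-closed in $T_2(\Lambda)\modd$ and inherit exact structures (conflations are the short exact sequences with all terms in the subcategory) with enough relative projectives and injectives, which I would compute explicitly: for $\calS(\calC)$ the indecomposable relative projectives are the $(P\xrightarrow{1}P)$ and $(0\to P)$ with $P$ projective in $\calC$, and the relative injectives the $(I\xrightarrow{1}I)$ and $(0\to I)$ with $I$ injective in $\calC$; dually for $\calF(\calC)$. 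The crucial point for (1) is that $\calS(\calC)$ and $\calF(\calC)$ are \emph{functorially finite} in $T_2(\Lambda)\modd$: given $(X\xrightarrow{f}Y)$ one constructs right and left approximations from right/left $\calC$-approximations of the two components by a pullback--pushout argument along the defining conflations, extending the resolving case of \cite{LZ3}. Auslander-Smal\o{}'s theorem on almost split sequences in functorially finite, extension-closed subcategories of $T_2(\Lambda)\modd$ then gives (1); in particular $\calC$ itself has almost split sequences, so $\tau_\calC$ and the duality used below are available. The statement for $\calF(\calC)$ also follows from that for $\calS(\calC)$ by applying the duality $D$, which interchanges inflations with deflations, $\calS(\calC)$ with $\calF(\calC)$, and $\bcok$ with the kernel functor --- this is why the two formulas have transposed shapes.

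For (2) I would bring in the two functors linking $\calS(\calC)$ with $\calC$: the cokernel functor $\bcok$ sending an inflation $f\colon X\to Y$ to its attached deflation $Y\to\cok f$, and the monomorphization $\Mimo\colon\calH(\calC)\to\calS(\calC)$ sending a morphism $\phi\colon A\to B$ to the inflation $\binom{\phi}{\,e\,}\colon A\to B\oplus I$, where $e\colon A\to I$ extends an injective envelope of $\ker\phi$; thus $\Mimo\phi$ is the minimal inflation dominating $\phi$, and $\Mimo$ is the identity on inflations. I would record the basic homological facts: the snake lemma makes the underlying object cokernel an exact functor $\calS(\calC)\to\calC$ that carries relative projectives, resp.\ injectives, of $\calS(\calC)$ to projectives, resp.\ injectives, of $\calC$, and hence descends to $\underline\bcok$ on stable and $\overline\bcok$ on costable categories; and $\Mimo$ is well-defined up to isomorphism on morphisms modulo those factoring through injectives, so it induces a functor out of the costable category, compatible with $\overline\bcok$ and with $\bcok\,\Mimo(\phi)\simeq\phi$ up to that identification. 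The heart of the argument is a computation of $\Ext^1$ in $\calS(\calC)$: from a relative projective presentation of $f$ built out of a projective presentation of the deflation $\bcok(f)$ in $\calC$ together with summands $(0\to P)$, one identifies $\Ext^1_{\calS(\calC)}(f,g)$ with $\Ext^1_\calC$- and $\Hom_\calC$-data read off from $\bcok(f)$ and $\bcok(g)$. Feeding this into the Auslander-Reiten-Serre duality $\Ext^1_{\calS(\calC)}(f,g)\cong D\,\overline{\Hom}_{\calS(\calC)}(g,\tau_\calS f)$, together with the duality $\Ext^1_\calC(-,-)\cong D\,\overline{\Hom}_\calC(-,\tau_\calC(-))$ of $\calC$ and the naturality of the $\bcok$/$\Mimo$ identification, yields an isomorphism $\overline{\Hom}_{\calS(\calC)}(g,\tau_\calS f)\cong\overline{\Hom}_{\calS(\calC)}(g,\Mimo\tau_\calC\underline{\bcok(f)})$ natural in $g$; Yoneda in $\overline{\calS(\calC)}$ then gives $\tau_\calS f\cong\Mimo\tau_\calC\underline{\bcok(f)}$, and a minimality argument (neither side has a relative injective summand) upgrades it to an isomorphism in $\calS(\calC)$. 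The $\tau_\calF$ formula follows by dualizing, or by the same argument with the kernel functor and $\Mepi$ in place of $\bcok$ and $\Mimo$.

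I expect the main obstacle to be the bookkeeping of relative (co)projective summands. The functor $\bcok$ is far from faithful on the stable category --- every object $(C\xrightarrow{1}C)$ is killed by $\bcok$ but is generally not relative projective --- so one must isolate the \emph{identity part} of an object of $\calS(\calC)$, show that $\Mimo$ and $\bcok$ restrict to an equivalence between $\overline\calC$ and the part of $\overline{\calS(\calC)}$ spanned by the objects with no identity summand, and track precisely how the split summands $(C\xrightarrow{1}C)$ and $(0\to I)$ enter and leave the projective presentations and the $\Ext^1$-comparison. Equivalently, the delicate point is to check that the Mimo construction turns an almost split sequence of $\calC$ into an almost split sequence of $\calS(\calC)$ up to such split summands. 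Once this correspondence is in place, the duality-matching step is formal.
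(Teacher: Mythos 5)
Your treatment of part (1) is essentially the paper's own: right and left approximations of $\calS(\calC)$ and $\calF(\calC)$ inside $T_2(\Lambda)\modd$ assembled from $\calC$-approximations by pullback/pushout, followed by Auslander--Smal\o{}; that part is sound.

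For part (2) there is a genuine gap exactly at the step you declare to be formal. The identification of $\Ext^1_{\calS(\calC)}(f,g)$ with $\Ext^1_\calC$/$\Hom_\calC$-data of $\bcok(f)$, and its matching, naturally in $g$, with $D\Hom_{\overline{\calS(\calC)}}(g,\Mimo\tau_\calC\underline{\bcok(f)})$, is precisely the content of the theorem, and your proposal supplies no mechanism for it. The tools you gesture at do not provide one: the comparison functor $\pi:\overline{\calS(\calC)}\to\calH(\overline\calC)$ is only an epivalence (full, dense, reflects isomorphisms) and is in general not faithful, so $\Mimo$ and $\bcok$ do not restrict to an equivalence onto the ``identity-free part'', and a natural isomorphism of $\Hom$-functors cannot be transported across it; hence the planned ``compute over $\calC$, then Yoneda in $\overline{\calS(\calC)}$'' argument does not go through as stated. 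Your closing diagnosis is also off target: outside the sporadic/boundary cases the almost split sequence in $\calS(\calC)$ ending at $f$ is \emph{not} obtained by applying $\Mimo$ to an almost split sequence of $\calC$; the formula applies $\tau_\calC$ degreewise to the morphism $\underline{\bcok(f)}$ in $\underline\calC$, and one even has to justify that $\Mimo\tau_\calC\underline{\bcok(f)}$ is well defined, which the paper does via the epivalence.

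For comparison, the paper replaces your Ext-comparison by a transfer through functor categories, precisely because $\calH(\calC)$ is not a module category and carries no Nakayama or transpose functor: after the sporadic cases, the remaining almost split sequences in $\calF(\calC)$ are cw-exact; the stable functor sends them to cw-exact almost split sequences in $\calH^{\rm cw}(\underline\calC)$; the functor $\Theta$ sends those to almost split sequences in $\underline\calC\modd$, where $D\Tr$ is computable by the dualizing-variety machinery; one then compares two minimal projective presentations of the same functor and uses the epivalence $\underline{\calF(\calC)}\to\calH(\underline\calC)$, which reflects isomorphisms, to obtain $\tau_\calF f\cong\bcok\Mimo\tau_\calC(\underline f)$, and the RSS-equivalence yields the $\tau_\calS$ formula. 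If you wish to salvage your route, the missing lemma you must actually prove is an isomorphism $\Ext^1_{\calS(\calC)}(f,-)\cong D\Hom_{\overline{\calS(\calC)}}(-,\Mimo\tau_\calC\underline{\bcok(f)})$ of functors on $\calS(\calC)$; establishing this in the exact-category setting is the real difficulty that the paper's detour is designed to overcome.
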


We need to clarify the notations as well as make some remarks concerning this result.

\begin{enumerate}[label=(\roman*)]
 \item {\bf Exact structures and almost split sequences.} When $\calC$ is an exact category, the morphism category $\calH(\calC)$ and its exact subcategories $\calS(\calC)$, $\calF(\calC)$ have a degreewise exact structure inherited from $\calC$ (see Definition \ref{defn:deg_exact}). The almost split structure in Theorem \ref{thm:A} is considered with respect to this exact structure. 

 Some special almost split sequences in $\calH(\calC)$, $\calS(\calC)$ and $\calF(\calC)$ are classified in Lemma \ref{lem:sporadic}. The others are so-called cw-exact i.e. degreewise split exact (see Definition \ref{defn:cw_exact}).

\item {\bf RSS-equivalence.} Let $A\stackrel{f}\to B\stackrel{g}\to C$ be a conflation in an exact category $\calC$. We denote by $\cok f=g$ and $\Ker g=f$. In fact, this induces a pair of equivalence functors $\bcok:\calS(\calC)\to \calF(\calC)$ and $\bKer:\calF(\calC)\to\calS(\calC)$, which are called the RSS-equivalences. Furthermore, if $\calC$ has enough   $\Ext$-projective objects, they induce functors on the stable categories $\bcok:\underline{\calS(\calC)}\to \underline {\calF(\calC)}$ and $\bKer:\underline{\calF(\calC)}\to \underline{\calS(\calC)}$.

\item {\bf $\Mimo$ and epivalence.} The notion of minimal monomorphism was introduced in \cite{RS2}. For any morphism $f:A\to B$ in a module category $\m\modd$, the minimal monomorphism $\Mimo f=[f,e]^T:A\to B\oplus I\Ker f$, where $e$ is an extension of the injective envelope $i:\Ker f\to I\Ker f$ by the inclusion $\Ker f\hookrightarrow A$. It was shown that $\Mimo f$ is unique up to isomorphism, independent on the choice of $e$ and it is a minimal right $\calS(\m)$-approximation of $f$. In \cite{BM}, the authors pointed out that $\Mimo$ is a functor $\calH(\m)\to \overline{\calS(\m)}$. In \cite{LS,GKKP2}, it was shown that right $\calS(\m)$-approximations $\mathfrak R f\to f$  give rise to a functor $\mathfrak R:\calH(\m)\to \overline{\calS(\m)}$. In this paper, we generalize this notion for a Krull-Schmidt exact category $\calC$ with enough  $\Ext$-injective objects and denote the image of right $\calS(\calC)$-approximations of $f$ in the stable subcategory $\overline{\calS(\calC)}$ by $\Mimo f$. Hence, $\Mimo:\calH(\calC)\to\overline{\calS(\calC)}$ is a functor.

Another important functor is the epivalence $\pi:\overline{\calS(\calC)}\to \calH(\overline \calC)$ (see Definition \ref{def:epiv}, Theorem \ref{thm:epivalence}), which sends an inflation $f$ to its image $\overline f$ in the stable category $\overline\calC$ and $\pi':\underline{\calF(\calC)}\to \calH(\underline \calC)$ which sends a deflation $f$ to its image $\underline f$ in $\underline \calC$. Since an epivalence induces a bijection of indecomposable objects between categories, it turns out that the inverse image of an indecomposable object $\overline f$ in $\calH(\overline \calC)$ under the functor $\pi$ agrees with $\Mimo f$. Hence we denote $\Mimo \overline f:=\Mimo f$ as an object in $\overline{\calS(\calC)}$ (Remark \ref{rem:mimo_well_def}).

\item {\bf Functoriality.} We clarify the notations in these two Auslander-Reiten formulas.
In general, if a Krull-Schmidt exact category $\calC$ with enough  $\Ext$-projective and $\Ext$-injective objects has almost split sequences, then the Auslander-Reiten translation is a functor $\tau_\calC:\underline\calC\to \overline\calC$. However, our formulas for $\tau_\calS$ and $\tau_\calF$ only hold for objects $f$ in $\underline{\calS(\calC)}$ or $\underline{\calF(\calC)}$, but not for morphisms $(\alpha,\beta):f\to g$ in $\underline{\calS(\calC)}$ or $\underline{\calF(\calC)}$. 
Well-definedness of the formulas can be checked directly from the following compositions:
$$
\begin{tikzcd}
\tau_\calS:\underline{\calS(\calC)}\ar[r,"\bcok"]& \underline{\calF(\calC)} \ar[r,"\pi'"]& \calH(\underline\calC) \ar[r,"\tau_\calC"]&\calH(\overline \calC)\ar[r,"\Mimo"]& \overline{\calS(\calC)}
\end{tikzcd}
$$$$
\begin{tikzcd}
\tau_\calF:\underline{\calF(\calC)} \ar[r,"\pi'"]& \calH(\underline\calC)\ar[r, "\tau_\calC"]& \calH(\overline\calC) \ar[r,"\Mimo"] & \overline{\calS(\calC)}\ar[r,"\bcok"] &
\overline{\calF(\calC)}
\end{tikzcd}
$$

\item {\bf Comparison of the proofs.} Although the formulas seem quite similar than those in Theorem \ref{thm:RS}, we would like to mention that we have to choose a different route of proof in this paper. The original proof in \cite{RS2} used techniques of transferring almost split sequences. Since there are functors $\bKer:\calH(\m)\to \calS(\m)$ and 
$\bcok:\calH(\m)\to \calF(\m)$ which preserve certain almost split sequences, one can transfer the problem of calculating the Auslander-Reiten translation in $\calS(\m)$ and $\calF(\m)$ to the problem of calculating Auslander-Reiten translation in $\calH(\m)$, which is equivalent to the module category $T_2(\Lambda)\modd$. Then the Auslander-Reiten translation in $\calH(\m)$ can be computed as $\tau_\calH=D\Tr$, where $\Tr$ is the transpose functor on $T_2(\Lambda)\modd$. However, one may immediately realize that substituting $\calH(\Lambda)$ by $\calH(\calC)$ will cause the problem that there no longer exists  a ``Nakayama functor'' or a ``transpose functor'' on $\calH(\calC)$, since it is not necessarily a module category. 

To solve this problem, we need to transfer almost split sequences through functor categories. The key observation is due to a recent survey by Hafezi and Eshraghi \cite{HE}, where they studied the functor $\Theta: \calH(\calC)\to \calC\modd$ from the morphism category to the category of finitely presented functors on an additive category $\calC$, showing that the functor $\Theta$ preserves cw-exact almost split sequences of $\calH(\calC)$ (see Theorem \ref{thm:EH}).

We briefly explain the strategy of our proof here. First of all, up to some reductions it  suffices to prove the formula $\tau_\calF(f)\cong\bcok\Mimo\tau_\calC(\underline f)$ holds for deflations $f$ which is the end term of a cw-exact almost split sequence (see Section \ref{subsec:proof}). We use the stable functor $\calF(\calC)\to \calH(\underline \calC)$ ($f\mapsto \underline f$) to transfer cw-exact almost split sequences in $\calF(\calC)$ to cw-exact almost split sequences in $\calH(\underline \calC)$ (Lemma \ref{lem:stable almost split}). And then we can  use the functor $\Theta:\calH(\underline\calC)\to \underline\calC\modd$ to transfer cw-exact almost split sequences in $\calH(\underline \calC)$ to almost split sequences in $\underline\calC\modd$. Finally, the problem boils down to computing the Auslander-Reiten translation on $\underline\calC\modd$, which has been investigated by \cite{AR74}.

\item {\bf Generalizations and relevant results.} In this paper, we focus on the ``submodule categories'' as its importance of being a prototype of the general (separated) monomorphism categories which have been studied intensively in \cite{LZ1,LZ2,ZX}. Meanwhile, further investigations and generalizations are expected. Possible generalizations are two-folded. 
On one hand, in contrast of our assumption that $\calC$ is an exact subcategory of an abelian category and has enough $\Ext$-projective and $\Ext$-injective objects, one may consider the Auslander-Reiten theory of $\calS(\calC)$ for a $\Hom$-finite Krull-Schmidt exact category $\calC$ having almost split sequences, but without enough $\Ext$-projective or $\Ext$-injective objects (e.g. the category of coherent sheaves ${\rm coh} \mathbb P^1$). For such categories $\calC$, it is beyond our knowledge so far to show that $\calS(\calC)$ also has almost split sequences. 
On the other hand, one may consider the Auslander-Reiten theory for separated monomorphism categories $\smon(Q,\calC)$ of an exact category $\calC$ over an acyclic quiver $Q$. In \cite{XZZ}, the Auslander-Reiten translation formula was given for an $\mathbb A_n$ type quiver $Q$ with linear orientation and $\calC=\m\modd$ for some artin algebra $\m$. In \cite{GKKP3}, the authors explicitly described the Auslander-Reiten translation for the monomorphism category of dualisable pro-species. 
\end{enumerate}

Especially, we are interested in the monomorphism categories $\calS(\calC)$ of Frobenius categories $\calC$, which by \cite{C1} are also Frobenius. 
 As an application, when $\calC$ is a stably $d$-Calabi-Yau Frobenius category, we calculate objects under the powers of Auslander-Reiten translation in the triangulated category $\overline{\calS(\calC)}$.
\vskip5pt
 
\begin{thm}\label{thm:B}
Let $\calV$ be a dualizing $k$-variety and $\calC$ be a functorially finite stably $d$-Calabi-Yau Frobenius category of $\calV\modd$. Let $\la 1\ra$ be the suspension functor of the stable category $\overline{\calS(\calC)}$. The Auslander-Reiten translation $\tau_\calS:\overline{\calS(\calC)}\to\overline{\calS(\calC)}$ satisfies 
$
\tau_\calS^6 f\cong f\la 6d-4\ra 
$
for every object $f$ in $\overline{\calS(\calC)}$.
\end{thm}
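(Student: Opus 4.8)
The plan is to push the whole computation into the morphism category $\calH(\overline\calC)$ through the epivalence $\pi\colon\overline{\calS(\calC)}\to\calH(\overline\calC)$, and to identify there, \emph{on objects}, both $\tau_\calS$ and the suspension $\la 1\ra$ with completely explicit operations. Since $\calC$ is Frobenius, $\underline\calC=\overline\calC$ is triangulated with suspension $\Sigma=\Omega^{-1}$, and $\calS(\calC)$, $\calF(\calC)$ are Frobenius by \cite{C1}; in particular $\overline{\calS(\calC)}=\underline{\calS(\calC)}$, $\calH(\underline\calC)=\calH(\overline\calC)$, and $\pi'$ may be viewed as an epivalence $\overline{\calF(\calC)}\to\calH(\overline\calC)$. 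I write $\Sigma$ also for the autoequivalence of $\calH(\overline\calC)$ obtained by applying $\Sigma$ to source and target, and let $\Rot\colon\calH(\overline\calC)\to\calH(\overline\calC)$ be the operation on isomorphism classes of objects sending a morphism $u\colon X\to Y$ to $\bigl(Y\to\mathrm{cone}(u)\bigr)$; this is well defined since cones are unique up to isomorphism, and rotating a distinguished triangle $X\xrightarrow{u}Y\to Z\to\Sigma X$ three times shows $\Rot^{3}\cong\Sigma$ on objects (replacing $-\Sigma u$ by $\Sigma u$ does not change the isomorphism class of an object of $\calH(\overline\calC)$).

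The first key step combines Theorem \ref{thm:A} with the definitions of $\Mimo$ and $\pi'$. By Theorem \ref{thm:A} and the factorization recorded just after it, $\tau_\calS(f)\cong\Mimo\,\tau_\calC\,\pi'\bcok(f)$ in $\overline{\calS(\calC)}$ for every object $f$; applying $\pi$ and using that $\Mimo$ agrees with the inverse of $\pi$ on objects (Remark \ref{rem:mimo_well_def}) gives $\pi(\tau_\calS f)\cong\tau_\calC\bigl(\pi'\bcok(f)\bigr)$. If $f$ is the inflation of a conflation $A\xrightarrow{f}B\xrightarrow{g}C$ in $\calC$, then $\pi'\bcok(f)=\underline g$, and the triangle $A\xrightarrow{\overline f}B\xrightarrow{\underline g}C\to\Sigma A$ of $\underline\calC$ identifies $\underline g$ with $\Rot(\overline f)=\Rot(\pi f)$ in $\calH(\overline\calC)$, whence
\[
\pi\circ\tau_\calS\ \cong\ \tau_\calC\circ\Rot\circ\pi\qquad\text{on objects.}
\]
The second key step is a syzygy computation in $\calS(\calC)$: lifting $f\colon A\hookrightarrow B$ along the deflation onto $(A\hookrightarrow B)$ from the projective object $\bigl(P_A\xrightarrow{\binom{1}{0}}P_A\oplus P_C\bigr)$ of $\calS(\calC)$ (with $P_A\twoheadrightarrow A$, $P_C\twoheadrightarrow C$ deflations from projectives of $\calC$), the kernel is an inflation $\Omega A\hookrightarrow K$ with $K=\ker(P_A\oplus P_C\twoheadrightarrow B)$; the induced map $P_A\oplus P_C\twoheadrightarrow B$ is the horseshoe cover of $A\to B\to C$, so $(\Omega A\hookrightarrow K)$ represents $\Omega_{\calS(\calC)}(f)$ in $\overline{\calS(\calC)}$ and becomes $\bigl(\Omega A\xrightarrow{\Omega\overline f}\Omega B\bigr)=\Omega(\pi f)$ in $\calH(\overline\calC)$. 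Hence $\pi(\la -1\ra f)\cong\Omega(\pi f)$, i.e.
\[
\pi\circ\la 1\ra\ \cong\ \Sigma\circ\pi\qquad\text{on objects.}
\]

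It remains to insert the Calabi--Yau hypothesis. For the Frobenius category $\calC$ the exact Auslander--Reiten translation $\tau_\calC\colon\underline\calC\to\overline\calC$ agrees with the Auslander--Reiten translation of the triangulated category $\underline\calC$, which equals $\mathbb S\circ\Sigma^{-1}$ for the Serre functor $\mathbb S$; as $\underline\calC$ is stably $d$-Calabi--Yau, $\mathbb S\cong\Sigma^{d}$ and therefore $\tau_\calC\cong\Sigma^{d-1}$ as a triangulated autoequivalence. Being triangulated, $\tau_\calC$ sends distinguished triangles to distinguished triangles, hence commutes with $\Rot$ on objects of $\calH(\overline\calC)$ (where it acts on source and target). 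Iterating the first identity and using this,
\[
\pi(\tau_\calS^{6}f)\ \cong\ (\tau_\calC\circ\Rot)^{6}\,\pi f\ \cong\ \tau_\calC^{6}\,\Rot^{6}\,\pi f\ \cong\ \Sigma^{6(d-1)}\,\Sigma^{2}\,\pi f\ =\ \Sigma^{6d-4}\,\pi f\ \cong\ \pi\bigl(f\la 6d-4\ra\bigr),
\]
the last isomorphism by iterating the second identity. Since $\pi$ is an additive epivalence between Krull--Schmidt categories it reflects isomorphism classes of objects, so from $\pi(\tau_\calS^{6}f)\cong\pi(f\la 6d-4\ra)$ we conclude $\tau_\calS^{6}f\cong f\la 6d-4\ra$.

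The main obstacle is the identity $\pi\circ\la 1\ra\cong\Sigma\circ\pi$: it rests on pinning down the projective--injective objects of $\calS(\calC)$ and the syzygy functor there, and on matching via the horseshoe lemma the syzygy taken inside $\calS(\calC)$ with the diagonal syzygy of $\calH(\overline\calC)$ — alternatively this can be routed through the functor $\Theta\colon\calH(\underline\calC)\to\underline\calC\modd$ of Theorem \ref{thm:EH}, under which syzygies are transparent. The other point needing care is that the formulas for $\tau_\calS$ and $\tau_\calF$ are asserted only on objects, so every natural isomorphism above must be read on objects; once this is granted, the rotation bookkeeping $\Rot^{3}\cong\Sigma$ and the input $\tau_\calC\cong\Sigma^{d-1}$ are formal (and, incidentally, the same bookkeeping with $n=3$ seems to yield the sharper $\tau_\calS^{3}f\cong f\la 3d-2\ra$, of which Theorem \ref{thm:B} is a consequence).
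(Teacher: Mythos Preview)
Your proof is correct and follows essentially the same route as the paper: both arguments push the computation into $\calH(\overline\calC)$ via the epivalence $\pi$, establish the two key identities $\pi\circ\tau_\calS\cong\tau_\calC\circ\Rot\circ\pi$ and $\pi\circ\la 1\ra\cong\Sigma\circ\pi$ on objects, and then finish with the bookkeeping $\Rot^{6}\cong\Sigma^{2}$ together with $\tau_\calC\cong\Sigma^{d-1}$. The only cosmetic difference is that you verify the suspension compatibility by a syzygy/horseshoe argument, whereas the paper does the dual cosyzygy computation (its Proposition~\ref{prop:shift}); your final remark that the same manipulation already gives $\tau_\calS^{3}f\cong f\la 3d-2\ra$ is also correct, since $-\Sigma u$ and $\Sigma u$ are isomorphic as objects of $\calH(\overline\calC)$.
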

\endgroup

\subsection{Outline of the paper}
This paper is organized in the following way. In section 2, we recall the definitions of exact categories, stable categories and almost split sequences. In section 3, we recall the exact structures on morphism categories and characterize almost split sequences in morphism categories as well as their relation with almost split sequences in the functor categories. In section 4, we study the existence of almost split sequences in monomorphism categories and prove Theorem \ref{thm:A}(1). Section 5 is devoted to studying the minimal monomorphisms and epivalences. In section 6, we present the proof of Theorem \ref{thm:A}(2). In section 7,  Auslander-Reiten translations of monomorphism categories of Frobenius categories are investigated. In section 8, we provide examples to further illustrate our results.

\section{Almost split sequences in exact categories}
 \subsection {Exact categories and (co)stable categories}
First, we recall some fundamental notions of exact categories in the sense of Quillen \cite{Q} and Keller \cite[Appendix A]{Ke}. One can also refer to \cite{B} for a detailed survey.

An exact structure $\calE$ on an additive category $\calC$ is a fixed class of composable morphism pairs $A\stackrel{i}\to B\stackrel{p}\to C$, where $i$ is a kernel of $p$ and $p$ is a cokernel of $i$, satisfying certain axioms, see e.g. \cite[Definition 2.1]{B}. An {\bf exact category} $(\calC,\calE)$ is a pair of an additive category $\calC$ and an exact structure $\calE$ on it. An element  $A\stackrel{i}\to B\stackrel{p}\to C$ of $\calE$ is called a conflation or short exact sequence, where $i$ is called an inflation or admissible monomorphism and $p$ is called a deflation or admissible epimorphism. Later on, when there is no confusion, we simply call $\calC$ an exact category, omitting the exact structure $\calE$. A conflation is usually denoted by $0\to A\stackrel{i}\to B\stackrel{p}\to C\to 0$.

\begin{exm}\label{exm:e_split}
For any additive category $\calC$, denote by $\calE^{\rm split}$ the exact structure containing only split exact sequences in $\calC$. Then $\calC^{\rm split}:=(\calC,\calE^{\rm split})$ is an exact category. In fact, $\calE^{\rm split}$ is the smallest exact structure on $\calC$.     
\end{exm}

Let $(\calC,\calE)$ and $(\calC', \calE')$ be exact categories. We say an additive functor $F:\calC\to \calC'$ preserves exact sequences if $FA\st{Ff}\to FB\st{Fg}\to FC \in \calE'$ whenever $A\st{f}\to B\st{g}\to C\in\calE.$ We say $F$ reflects exact sequences, if $A\st{f}\to B\st{g}\to C\in\calE$ whenever $FA\st{Ff}\to FB\st{Fg}\to FC \in \calE'$. An exact functor is an additive functor which preserves exact sequences. An {\bf exact equivalence} is an additive equivalence functor $F:(\calC,\calE)\to (\calC',\calE')$, which preserves and reflects exact sequences.

\begin{exm}
 Let $(\calC,\calE)$ be an exact category and $\calE$ contain at least one non-split exact sequences. Then the identity functor $1:(\calC,\calE^{\rm split})\to (\calC,\calE)$ is an exact functor, but not an exact equivalence.   
\end{exm}

Let $(\calC,\calE)$ be an exact category. A full subcategory $\calD\subseteq \calC$ is called an {\bf exact subcategory} of $(\calC,\calE)$ if $\calD$ is closed under extensions and equipped with an exact structure which contains $A\to B\to C\in\calE$, where $A,B,C\in\calD$. In some literature, exact subcategory are called fully exact subcategory.

In an exact category $(\calC,\calE)$, conflations $0\to A\to B\to C\to 0$ form an abelian group $\Ext^1(C,A)$ under Baer sum, which is also an $\End(A)$-$\End(C)$-bimodule with the actions given by pushouts and pullbakcs respectively. A morphism $f:X\to Y$ is called projectively trivial if the induced map $\Ext^1(f,M):\Ext^1(Y,M)\to \Ext^1(X,M)$ is zero for any object $M\in\calC$. An object $P$ is called $\Ext$-projective if the identity $1_P$ is projectively trivial. We say $\calC$ has enough $\Ext$-projective objects if for each $X\in\calC$, there is a deflation $f:P\to X$ for some $\Ext$-projective object $P$. Notions with respect to injectively trivial morphisms and $\Ext$-injective objects can be defined dually.

\begin{rem}
When $\calC$ has enough $\Ext$-projective (injective) objects, a morphism $f$ is projectively (injectively) trivial if and only if it factors through an $\Ext$-projective (injective) object. When $\calC$ is an abelian category and $\calE$ consists of all the short exact sequences in $\calC$, then $\Ext$-projective (injective) objects are simply projective and injective objects in $\calC$.  
\end{rem}

Let $\calC$ be an exact category. Denote by $\underline \calC$ the stable category, whose objects are the same as $\calC$ and morphism sets are $\Hom_{\underline \calC}(X,Y)=\Hom_{\calC}(X,Y)/\cP(X,Y)$, where $\cP(X,Y)$ denotes the set of projectively trivial morphisms from $X$ to $Y$. The costable category $\overline\calC$ is defined dually. Given a morphism $f\in\calC$, denote by $\underline f$ (resp. $\overline f$) the class of morphisms represented by $f$ in $\underline \calC$ (resp. $\overline\calC$). Define $Q_\calC:\calC\to \underline\calC$ the stable functor which sends morphisms $f\mapsto \underline f$. The costable functor $Q^\calC: \calC\to \overline \calC$ is defined dually. A {\bf Frobenius category} is an exact category $(\calC,\calE)$ with enough $\Ext$-projectives and $\Ext$-injectives, moreover, the $\Ext$-projectives coincide with the $\Ext$-injectives.

\begin{thm}\cite{Hap1}
Let $\calC$ be a Frobenius category. Then $\underline \calC=\overline \calC$ is a triangulated category.    
\end{thm}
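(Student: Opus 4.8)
The plan is to follow Happel's construction of the triangulated structure, adapted to the exact category $\calC$. Since $\calC$ is Frobenius, its projective and injective objects coincide, so the ideal of morphisms factoring through a projective equals the ideal of morphisms factoring through an injective. Consequently the quotients defining $\underline\calC$ and $\overline\calC$ are identical, and $\underline\calC=\overline\calC$ as additive categories; write $\calT$ for this common stable category. I first construct the suspension functor. Because $\calC$ has enough injectives, for each object $X$ I fix a conflation $0\to X\xrightarrow{i_X} I(X)\xrightarrow{p_X}\Sigma X\to 0$ with $I(X)$ injective and set $\Sigma X$ to be the cokernel. To see that $\Sigma$ descends to a functor on $\calT$, I use injectivity of $I(X)$: any $f\colon X\to Y$ lifts to a morphism of conflations, inducing $\Sigma f\colon \Sigma X\to \Sigma Y$, and a Schanuel-type comparison (the lift being unique up to a map factoring through $I(X)$) shows that $\Sigma f$ depends only on $\underline f$ and that $\Sigma X$ is independent of the chosen injective up to isomorphism in $\calT$. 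Dually, using enough projectives, I define the loop functor $\Omega$ via $0\to\Omega X\to P(X)\to X\to 0$. The Frobenius hypothesis then forces $\Sigma$ and $\Omega$ to be mutually inverse autoequivalences of $\calT$: the defining conflation of $\Sigma X$ simultaneously exhibits a projective presentation witnessing $\Omega\Sigma X\cong X$ in $\calT$, and symmetrically, so $\Sigma$ is genuinely a suspension.

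Next I would specify the distinguished triangles. Given any conflation $0\to X\xrightarrow{f} Y\xrightarrow{g} Z\to 0$, I form the pushout of $f$ along the injective inflation $i_X\colon X\to I(X)$. The exact-category axioms guarantee that this pushout exists and that the induced map remains an inflation, producing a commutative diagram with exact rows whose lower row is $0\to I(X)\to E\to Z\to 0$; the right-hand square then yields a connecting morphism $h\colon Z\to\Sigma X$. I declare the image in $\calT$ of $X\xrightarrow{f} Y\xrightarrow{g} Z\xrightarrow{h}\Sigma X$ to be a \emph{standard triangle}, and I call a diagram a distinguished triangle if it is isomorphic in $\calT$ to a standard one. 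Using the comparison lemmas from the first step, the connecting map $h$ is independent of the choices up to the equivalences above, so this class is well-defined and closed under isomorphism.

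Finally I would verify the axioms of a triangulated category. Axiom (TR1) is immediate from the construction (the split conflation gives the identity triangle, and every morphism embeds in a conflation after adding an injective), and (TR3), the completion of a commutative square to a morphism of triangles, follows from the lifting property of injectives together with the universal property of the pushout. The rotation axiom (TR2) is the first genuinely technical point: I must compare the pushout construction applied to $g$ with $\Sigma$ applied to the pushout for $f$ and check that the rotated diagram agrees, up to the sign $-\Sigma f$, with a standard triangle in $\calT$. I expect the main obstacle to be the octahedral axiom (TR4): starting from composable conflations, I would realize all the relevant objects through three pushout squares simultaneously, assemble the fourth conflation relating the cones, and then carry out a single large diagram chase to extract the required octahedron. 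This chase is the heart of Happel's argument, and the delicate part is bookkeeping of which squares commute strictly in $\calC$ and which only commute after passing to $\calT$, i.e.\ modulo maps factoring through projective-injective objects; once that is controlled, all four axioms hold and $\underline\calC=\overline\calC=\calT$ is triangulated with suspension $\Sigma$.
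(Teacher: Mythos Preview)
Your sketch is a correct outline of Happel's original argument and, as far as it goes, contains no genuine gap: the construction of $\Sigma$ and $\Omega$ via injective and projective presentations, their mutual inverseness in the Frobenius case, the definition of standard triangles by pushout along $i_X$, and the verification of (TR1)--(TR4) are exactly the steps carried out in \cite{Hap1}. Note, however, that the paper does not supply its own proof of this statement at all; it is quoted as a background result with a citation to Happel, so there is nothing in the paper to compare your argument against beyond the reference itself.
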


\subsection{Krull-Schmidt categories.} We are mostly interested in additive categories which are Krull-Schmidt. 
Recall that an additive category $\mathcal C$ is called {\bf Krull-Schmidt} if any object $M\in\mathcal C$ has a unique (up to permutation) decomposition: $M=\mathop{\oplus}\limits_{i=1}^{n}M_i$,where $\End_{\mathcal C}(M_i)$ is a local ring for all $1\leq i\leq n$.    
It is well-known that a category is Krull-Schmidt if and only if it has split idempotents and the endomorphism ring of each object is semi-perfect \cite[Corollary 4.4]{Kr}. In particular, $\Hom$-finite idempotent split $k$-linear categories are Krull-Schmidt.

Recall that in an additive category $\calC$, two morphisms $f:X\to Y$ and $f':X'\to Y$ are called right equivalent if there exist $g:X\to X'$ and $h:X'\to X$ such that $f=f'g$ and $f'=fh$. A morphism $f:X\to Y$ is called right minimal if any endomorphism $g\in \End(X)$ such that $f=fg$ is an automorphism.  
It is well-known that in a Krull-Schmidt category, any morphism $f:X\to Y$ has a unique up to right equivalence decomposition $f=(f_1,f_2):X=X_1\oplus X_2\to Y$ such that $f_1$ is right minimal and $f_2=0$ (see \cite[Corollary 1.4]{KS}), where $f_1$ is called the right minimal version of $f$. The notion of left minimal mophisms is defined dually.

A morphism $f:P\to M$ in an exact category $\mathcal C$ is called an {\bf $\Ext$-projective cover}, if $f$ is a right minimal deflation and $P$ is $\Ext$-projective. The dual notion of {\bf $\Ext$-injective envelopes} are defined similarly. Due to the existence of right (left) minimal morphisms in Krull-Schmidt categories, it is easy to see the existence of $\Ext$-projective cover and $\Ext$-injective envelopes. 

\begin{prop}\label{prop:KS_cover}
    Let $\calC$ be a Krull-Schmidt exact category with enough $\Ext$-projective (resp. $\Ext$-injective) objects. Then each object admits an $\Ext$-projective cover (resp. $\Ext$-injective envelope).
\end{prop}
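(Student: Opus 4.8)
The plan is to prove the statement for projective covers; the assertion about injective envelopes then follows by duality, passing to the opposite exact category $\calC^{\op}$, which is again Krull--Schmidt and exact and whose projective objects are exactly the injective objects of $\calC$, so that a projective cover in $\calC^{\op}$ is precisely an injective envelope in $\calC$. Fix an object $X$. Since $\calC$ has enough projectives, I would start from a deflation $q\colon Q\to X$ with $Q$ projective. Unwinding the definition, to exhibit a projective cover it suffices to replace $q$ by a \emph{right minimal} deflation from a projective object, that is, a deflation $p\colon P\to X$ with $P$ projective such that every $\phi\in\End_\calC(P)$ with $p\phi=p$ is an automorphism. Thus the task is to cut $q$ down to its right minimal part while preserving both projectivity of the source and the property of being a deflation.

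First I would carry out the purely additive extraction. Because $\calC$ is Krull--Schmidt, $Q$ is a finite direct sum of indecomposables with local endomorphism rings, so it has only finitely many direct summands up to isomorphism. Among all direct summands $P'$ of $Q$ through which $q$ factors along the split projection $Q\to P'$, choose one, call it $P$, with the fewest indecomposable summands, and write $Q=P\oplus Q_0$ so that $q=q_P\pi_P$ with $q_P:=q|_P$ and $q|_{Q_0}=0$. I would then check that $q_P$ is right minimal: if some $\phi\in\End_\calC(P)$ satisfied $q_P\phi=q_P$ without being an automorphism, then, $\End_\calC(P)$ being semiperfect, a Fitting-type argument yields a proper summand of $P$ through which a suitable power $\phi^n$ factors; since $q_P\phi^n=q_P$, the map $q$ would then factor through that proper summand, contradicting the minimal choice of $P$. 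This is the standard construction of the right minimal version of a morphism in a Krull--Schmidt category.

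It remains to reconcile this additive construction with the exact structure, and this is the step I expect to require the most care. The object $P$ is a direct summand of the projective $Q$, hence projective; the delicate point is that $q_P\colon P\to X$ be a deflation. Let $k\colon K\to Q$ denote the kernel in the conflation with deflation $q$, and let $e=\iota_P\pi_P\in\End_\calC(Q)$ be the idempotent projecting $Q$ onto $P$. Using $q|_{Q_0}=0$ one computes $qe=q$, so the pair $(e,1_X)$ is compatible with $q$; since $k$ is a kernel, $e$ lifts uniquely to an idempotent $e_K\in\End_\calC(K)$ with $ek=ke_K$, giving an idempotent endomorphism $(e_K,e,1_X)$ of the conflation. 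A Krull--Schmidt exact category is idempotent complete, so this idempotent splits the conflation into a direct sum of two conflations; the summand singled out by $(e_K,e,1_X)$ has middle term $P$, end term $X$ and deflation $q_P$. Hence $q_P$ is a deflation, and being a right minimal deflation from a projective, it is a projective cover of $X$.

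I expect the genuine obstacle to lie entirely in this last transfer: the right minimal version of $q$ is produced by an argument that uses only the additive structure, and one must guarantee that it survives as a deflation for the given exact structure. In an ambient module category this is automatic, since a surjection restricts to a surjection on a summand still mapping onto the quotient; in a general exact category it must be extracted from the axioms, and encoding it as an idempotent endomorphism of the conflation and invoking idempotent completeness of Krull--Schmidt exact categories is the cleanest route I see.
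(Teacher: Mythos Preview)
The paper does not actually prove this proposition; it records it as well known and cites \cite[Proposition~1.2]{KS}. Your proposal is essentially a reconstruction of that standard argument: extract a right minimal version of a projective deflation and verify that it is still a deflation.

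One step does not go through as written. The ``Fitting-type argument'' you invoke---that some power $\phi^n$ of a non-automorphism factors through a proper summand of $P$---fails in a general Krull--Schmidt category: already for $P$ indecomposable with $\End_\calC(P)$ a non-artinian local ring, no power of a radical element need vanish, and there is no proper nonzero summand to factor through. A clean route to right minimality of $q_P$ is to work in the Krull--Schmidt category $\calH(\calC)$: decompose $q_P=\bigoplus f_i$ into indecomposables $f_i\colon M_i\to N_i$; your minimal choice of $P$ forces $N_i\neq 0$ whenever $M_i\neq 0$. Writing $(\phi,1_X)\in\End_{\calH(\calC)}(q_P)$ as a block matrix, each diagonal entry $(\phi_{ii},1_{N_i})$ lies in the local ring $\End_{\calH(\calC)}(f_i)$ and is a unit (otherwise $(1-\phi_{ii},0)$ would be a unit, forcing $N_i=0$), while each off-diagonal entry $(\phi_{ij},0)$ has vanishing second component and is therefore a non-isomorphism between indecomposables, hence radical. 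Thus $(\phi,1_X)$, and so $\phi$, is invertible. Your deflation step is correct; it can be streamlined by noting that once your idempotent lift produces a kernel for $q_P$, the obscure axiom---redundant in every exact category, see \cite[Proposition~2.16]{B}---immediately gives that $q_P$ is a deflation because $q_P\pi_P=q$ is.
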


Denote by $\calC_\cP$ (resp. $\calC_\calI$) the full subcategory consisting of objects in $\calC$ which do not contain any non-zero $\Ext$-projective (resp. $\Ext$-injective) summands. There is an equivalence $\underline{\calC_\cP}\cong \underline\calC$ (resp. $\overline{\calC_\calI}\cong \overline\calC$)  induced by the inclusion $\calC_\cP\to\calC$ (resp. $\calC_\calI\to\calC$). The following result is easy to prove (cf.\cite[IV. Exercise 7]{ARS}).

\begin{lem}\label{lem:cp_iso}
Let $\calC$ be a  Krull-Schmidt exact category with enough $\Ext$-projectives. A morphism $f:A\to B$ in $\calC_\cP$ is an isomorphism if and only if $\underline f$ is an isomorphism in $\underline \calC$. 
\end{lem}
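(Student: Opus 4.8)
The ``only if'' direction is immediate, since the stable functor $Q_\calC:\calC\to\underline\calC$ sends isomorphisms to isomorphisms. For the converse, suppose $\underline f$ is an isomorphism and choose $g:B\to A$ with $\underline g=(\underline f)^{-1}$. Then $1_A-gf$ factors through a projective object and so does $1_B-fg$; that is, $1_A-gf\in\cP(A,A)$ and $1_B-fg\in\cP(B,B)$. The plan is to strengthen these to the assertions that $gf$ is an automorphism of $A$ and $fg$ is an automorphism of $B$, and then to read off that $f$ is an isomorphism.

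The heart of the argument, and the only point at which the hypotheses on $\calC$ are used, is the following: for every object $X\in\calC_\cP$ one has $\cP(X,X)\subseteq\rad\End_\calC(X)$. To prove this, write $X=\bigoplus_i X_i$ with the $X_i$ indecomposable and use the standard description of the radical of a Krull-Schmidt category, namely that an endomorphism of $X$ lies in $\rad\End_\calC(X)$ exactly when none of its components $X_i\to X_j$ (between indecomposable summands) is an isomorphism. Suppose, towards a contradiction, that some $\phi\in\cP(X,X)$ has a component $X_i\to X_j$ that is an isomorphism. Factoring $\phi=\beta\alpha$ through a projective object $P$, with $\alpha:X\to P$ and $\beta:P\to X$, this component factors as $X_i\xrightarrow{\mu}P\xrightarrow{\psi}X_j$ with $\psi\mu$ an isomorphism; hence $\mu$ is a split monomorphism, so the nonzero indecomposable $X_i$ is a direct summand of $P$ and is therefore projective, contradicting $X\in\calC_\cP$. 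This establishes the inclusion.

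Granting this, $1_A-gf\in\cP(A,A)\subseteq\rad\End_\calC(A)$, and since $1-r$ is invertible whenever $r$ lies in the Jacobson radical of a ring, $gf=1_A-(1_A-gf)$ is an automorphism of $A$; symmetrically $fg$ is an automorphism of $B$, using $B\in\calC_\cP$. Then $(gf)^{-1}g$ is a left inverse of $f$ and $g(fg)^{-1}$ is a right inverse of $f$, and a morphism admitting both a left and a right inverse is an isomorphism. I expect the only genuine obstacle to be the inclusion $\cP(X,X)\subseteq\rad\End_\calC(X)$, that is, correctly exploiting the Krull-Schmidt structure together with the absence of projective summands; the rest of the argument is formal. (The ``enough projectives'' assumption is not actually needed for this lemma beyond providing the ambient setup in which $\underline\calC$ is defined.)
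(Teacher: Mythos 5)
Your proposal is correct, and it is exactly the standard argument the paper has in mind: the paper omits the proof, citing it as an easy exercise (cf.\ [ARS, IV, Exercise 7]), and your route --- reducing to the inclusion $\cP(X,X)\subseteq\rad\End_\calC(X)$ for $X$ without nonzero projective summands via the Krull-Schmidt description of the radical, and then inverting $gf$ and $fg$ --- is the expected one. The key step is sound: if a component $X_i\to X_j$ of a map factoring through a projective $P$ were an isomorphism, then $X_i$ would be a retract of $P$, hence projective, contradicting $X\in\calC_\cP$; and your closing remark that enough projectives is not really needed for this particular lemma is also accurate.
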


 We introduce the notion of epivalences following \cite[Definition 5.5]{GKKP2}.  
 \begin{defn}\label{def:epiv}
    An additive functor $F:\calC\to \calD$ is said to be an {\bf epivalence}, if it is full, dense and reflects isomorphisms.
 \end{defn} 
 
 An important fact about epivalences is that $F$ preserves and reflects indecomposable objects. Hence an epivalence $F:\calC\to \calD$ immediately yields a bijection between indecomposable objects in $\calC$ and $\calD$. Notice that the notion of epivalences was introduced in \cite[Chapter II]{Aus} by the name of ``representation equivalence''. We prefer to use the name ``epivalence'', because it emphasizes the fact that for an epivalence $F:\calC\to \calD$, $\calC$ may contain more morphisms.
 Now we show an example of epivalence. 
\begin{lem}\label{lem:cp_epiv}
Let $\calC$ be a Krull-Schmidt exact category with enough  $\Ext$-projectives. Let $\iota$ be the inclusion of $\calC_\cP$ as a full subcategory of $\calC$. The restriction $Q_{\calC}|_{\calC_{\cP}}=Q_\calC\circ \iota:\calC_\cP\to \underline\calC$ is an epivalence.
\end{lem}
\begin{proof}
It is easy to see  $Q_\calC\circ\iota$ is full and dense. It reflects isomorphisms due to Lemma \ref{lem:cp_iso}.
\end{proof}

\begin{cor}\label{cor:qc_epiv}
The functor $Q_\calC|_{\calC_{\cP}}$ induces a bijection between indecomposable objects in $\calC_\cP$ and~$\underline\calC$.
\end{cor}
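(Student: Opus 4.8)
This corollary is an immediate consequence of Lemma~\ref{lem:cp_epiv}: by that lemma $F:=Q_\calC|_{\calC_\cP}$ is an epivalence, and, as recalled just above, an epivalence induces a bijection between the isomorphism classes of indecomposable objects of its source and target. So the plan is simply to invoke Lemma~\ref{lem:cp_epiv} and, for completeness, to spell out the (standard) reason why an epivalence has this property in the present situation.

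Concretely, I would proceed in three steps. First, I would check that $F$ sends an indecomposable $X\in\calC_\cP$ to an indecomposable object: since $F$ is full, the ring homomorphism $\End_\calC(X)\to\End_{\underline\calC}(FX)$ is surjective, so $\End_{\underline\calC}(FX)$ is a quotient of the local ring $\End_\calC(X)$, hence zero or local; and it cannot be zero, for otherwise $F(\mathrm{id}_X)=\mathrm{id}_{FX}$ would be an isomorphism, whence $\mathrm{id}_X$ would be an isomorphism because $F$ reflects isomorphisms, forcing $X=0$. (Here one uses that $\underline\calC$, being the stable category of a Krull--Schmidt exact category, is again Krull--Schmidt, so that a nonzero object with local endomorphism ring is indecomposable.) Second, for surjectivity I would take an indecomposable $Z\in\underline\calC$, use density of $F$ to produce $X\in\calC_\cP$ with $FX\cong Z$, decompose $X$ into indecomposables of the Krull--Schmidt category $\calC_\cP$, and observe that by the first step $Z$ is then a direct sum of nonzero indecomposables, so by its indecomposability only one summand occurs and $X$ may be taken indecomposable. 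Third, for injectivity on isomorphism classes I would lift an isomorphism $FX\cong FX'$ (with $X,X'\in\calC_\cP$ indecomposable) to a morphism $h\colon X\to X'$ using fullness of $F$, and conclude that $h$ is an isomorphism since $F$ reflects isomorphisms; hence $X\cong X'$.

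There is essentially no obstacle here: all the substance has already been absorbed into Lemmas~\ref{lem:cp_iso} and~\ref{lem:cp_epiv}, and what remains is the routine extraction of a bijection on indecomposables from an epivalence. The only point deserving a word of care is the tacit appeal to the Krull--Schmidt property of $\underline\calC$ and of $\calC_\cP$ (the latter closed under direct summands in the Krull--Schmidt category $\calC$), which is what makes the phrase ``the set of indecomposable objects'' and the decomposition step above legitimate; both categories inherit this property from the standing hypothesis on $\calC$.
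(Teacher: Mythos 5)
Your proposal follows exactly the paper's (implicit) route: the paper offers no separate argument for this corollary, treating it as immediate from Lemma~\ref{lem:cp_epiv} together with the standard fact, recalled just before Definition~\ref{def:epiv}, that an epivalence induces a bijection on indecomposables; your three steps are just the routine unpacking of that fact, and they are fine. One sentence, however, does not work as written: to see that $FX\neq 0$ for $X\in\calC_\cP$ indecomposable you argue that ``$F(\mathrm{id}_X)=\mathrm{id}_{FX}$ is an isomorphism, hence $\mathrm{id}_X$ is an isomorphism, forcing $X=0$'' --- but $\mathrm{id}_X$ is always an isomorphism, so reflection applied to it yields nothing and certainly does not force $X=0$. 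The repair is immediate: if $\End_{\underline\calC}(FX)=0$ then $FX\cong 0$, so the zero morphism $X\to X$ (equivalently $X\to 0$) is sent by $F$ to an isomorphism and is therefore itself an isomorphism, giving $X=0$, a contradiction; alternatively, note directly that $\mathrm{id}_X$ factoring through a projective would make the indecomposable $X$ projective, contradicting $X\in\calC_\cP$, so $FX\neq 0$ without any appeal to reflection of isomorphisms.
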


\subsection{Almost split sequences}
Following \cite[Definition 3.4]{INP}, we introduce the Auslander-Reiten-Serre duality for exact categories.  

\begin{defn}\label{defn:ARS_duality}
 Let $\calC$ be a $k$-linear exact category. \\
 (1) A right Auslander-Reiten-Serre (ARS) duality is a pair $(\tau, \eta)$ of an additive functor $\tau:\underline\calC\to\overline\calC$ and a binatural isomorphism
$\eta_{A,B} : \Hom_{\underline\calC}(A, B) \cong D\Ext^1_\calC(B, \tau A)$ for any $A, B \in\calC$ .\\
(2) If moreover $\tau$ is an equivalence, we say that $(\tau,\eta)$ is an Auslander-Reiten-Serre (ARS) duality.
\end{defn}

\begin{defn}\label{defn:ass_ex_cat}
Let $\calC$ be an additive category. We call $f:A\to B$ in $\calC$ a source map or left almost split, if it is not a section and each $u: A\to B'$ which is not a section factors through $f$. Dually, we call $g:B\to C$ a sink map or right almost split, if it is not a retraction and each $v: B'\to C$ which is not a retraction factors through $g$.
Furthermore, if $\calC$ is an exact category. An almost split sequence
is a conflation $A\stackrel{f}\to B\stackrel{g}\to C$ such that $f$ is minimal left almost split and $g$ is minimal right almost split. 
  We say an exact category $\calC$ has {\bf left (right) almost split sequences} if every endo-local non-$\Ext$-injective (projective) object is the starting (ending) term of an almost split sequence. 
\end{defn}

 Notice that the definitions of source maps and sink maps are based on the homomorphisms of the category $\calC$ and irrelevant to the exact structures over $\calC$. However, we want to emphasize that almost split sequences depend not only on the category but also on the exact structure.  

\begin{lem}\label{exact-AR} Let $\calE\subseteq\calE'$ be two exact structures on $\calC$.  If $\delta: 0\to A \to B\to C\to 0$ is an almost split sequence in $\calE$, then it is still almost split in $\calE'$.
\end{lem}

\section{Morphism categories and functor categories}

Let $\calC$ be an additive category. The objects in the morphism category $\calH(\calC)$ are morphisms \mbox{ $f:X_1\to X_2$} in $\mathcal C$ also denoted by $\left(\begin{smallmatrix} X_1\\ X_2\end{smallmatrix}\right)_f$. A morphism in $\calH(\calC)$
$$
\left(\begin{smallmatrix} \phi_1 \\ \phi_2 \end{smallmatrix}\right): \left(\begin{smallmatrix} X_1\\ X_2\end{smallmatrix}\right)_{f} \to {\left(\begin{smallmatrix}Y_1\\ Y_2\end{smallmatrix}\right)}_{g}
$$
is a pair of $\calC$-morphisms $\phi_1$, $\phi_2$ such that $\phi_2 f=g\phi_1$. 

\subsection{Exact structures on morphism categories} 

\begin{defn}[The {\rm deg}-exact structure]\label{defn:deg_exact} Let $(\calC,\calE)$ be an exact category.
 Let  $\calE^{\rm deg}$ be the class of all pairs of composable morphisms
$$\xymatrix@1{ & {\left(\begin{smallmatrix} X_1\\ X_2\end{smallmatrix}\right)}_{f}
		\ar[rr]^-{\left(\begin{smallmatrix} \phi_1 \\ \phi_2 \end{smallmatrix}\right)}
		& & {\left(\begin{smallmatrix}Y_1\\ Y_2\end{smallmatrix}\right)}_{g}\ar[rr]^-{\left(\begin{smallmatrix} \psi_1 \\ \psi_2\end{smallmatrix}\right)}& &{\left(\begin{smallmatrix}Z_1 \\ Z_2\end{smallmatrix}\right)}_{h}&  }$$
in $\calH(\calC)$ such that the induced composable morphisms $X_i\st{\phi_i}\rt Y_i\st{\psi_i}\rt Z_i$ are in $\calE$ for $i=1, 2$.
\end{defn}

It is straightforward to see that $\calE^{\rm deg}$ defines an exact structure on $\calH(\calC)$, which is usually referred to as the degreewise exact structure.
In particular, for any additive category $\calC$ there is an exact structure $\calE^{\rm split}$ as mentioned in Example \ref{exm:e_split}. Then the exact category $(\calC,\calE^{\rm split})$ induces an exact structure on $\calH(\calC)$ which can be explicitly described as below:

\begin{defn}[The {\rm cw}-exact structure] \label{defn:cw_exact}
Let  $\calE^{\rm cw}$ be the class of all pairs of composable morphisms
$$\xymatrix@1{ \delta: & {\left(\begin{smallmatrix} X_1\\ X_2\end{smallmatrix}\right)}_{f}
		\ar[rr]^-{\left(\begin{smallmatrix} \phi_1 \\ \phi_2 \end{smallmatrix}\right)}
		& & {\left(\begin{smallmatrix}Y_1\\ Y_2\end{smallmatrix}\right)}_{g}\ar[rr]^-{\left(\begin{smallmatrix} \psi_1 \\ \psi_2\end{smallmatrix}\right)}& &{\left(\begin{smallmatrix}Z_1 \\ Z_2\end{smallmatrix}\right)}_{h}&  } \ \    $$
such that the induced composable morphisms $X_i\st{\phi_i}\rt Y_i\st{\psi_i}\rt Z_i$ split in $\calC$ for $i=1, 2$.
The elements in $\calE^{\rm cw}$ are called cw-exact sequences. 
\end{defn}

From the definition, {\rm cw}-exact structure arises naturally as a $\rm deg$-exact structure.
\begin{prop}\label{prop:cw_deg}
Let $\calC$ be an additive category.
Then there is an equivalence of exact categories 
$(\calH(\calC),\calE^{\rm cw})\cong(\calH(\calC^{split}),\calE^{\rm deg})$.
\end{prop} 

It is worth mentioning the following easy and useful observation due to \cite{HE}.
\begin{rem}\label{rem:cw_tri_form}
Any {\rm cw}-exact sequence
$$\xymatrix@1{ \delta: & {\left(\begin{smallmatrix} X_1\\ X_2\end{smallmatrix}\right)}_{f}
		\ar[rr]^-{\left(\begin{smallmatrix} \phi_1 \\ \phi_2 \end{smallmatrix}\right)}
		& & {\left(\begin{smallmatrix}Y_1\\ Y_2\end{smallmatrix}\right)}_{g}\ar[rr]^-{\left(\begin{smallmatrix} \psi_1 \\ \psi_2\end{smallmatrix}\right)}& &{\left(\begin{smallmatrix}Z_1 \\ Z_2\end{smallmatrix}\right)}_{h}&  } $$
		is isomorphism to
$$\xymatrix@1{ \delta': & {\left(\begin{smallmatrix} X_1\\ X_2\end{smallmatrix}\right)}_{f}
		\ar[rr]^-{\left(\begin{smallmatrix}  \left[\begin{smallmatrix}1\\0\end{smallmatrix}\right]\\ \left[\begin{smallmatrix}1\\0\end{smallmatrix}\right] \end{smallmatrix}\right)}
		& & {\left(\begin{smallmatrix}X_1\oplus Z_1\\ X_2\oplus Z_2\end{smallmatrix}\right)}_{g'}\ar[rr]^-{\left(\begin{smallmatrix} \left[\begin{smallmatrix}0&1\end{smallmatrix}\right] \\ \left[\begin{smallmatrix}0&1\end{smallmatrix}\right]\end{smallmatrix}\right)}& &{\left(\begin{smallmatrix}Z_1 \\ Z_2\end{smallmatrix}\right)}_{h}&  }  $$
where $g'=\begin{bmatrix} f&q\\0&h\end{bmatrix}$ with $q:Z_1\to X_2$ a possibly non-zero morphism in $\calC$.
\end{rem}

When $\calC$ is an exact category,  there are two exact structures on $\calH(\calC)$, namely $\calE^{\rm cw}$ and $\calE^{\rm deg}$. We denote the exact category $(\calH(\calC), \calE^{\rm deg})$ and $(\calH(\calC), \calE^{\rm cw})$ by ${\calH}(\calC)^{\rm deg}$ and ${\calH}(\calC)^{\rm cw}$ respectively.  
 It is clear that $\calE^{\rm cw}\subseteq\calE^{\rm deg}$. So any {\rm cw}-exact sequence is still exact in $(\calH(\calC),\calE^{\rm deg})$. Intuitively, $\calE^{\rm deg}$ is the more natural exact structure because when $\calC\subseteq \Lambda\modd$ is an exact subcategory, $(\calH(\calC), \calE^{\rm deg})$ is an exact subcategory of $\calH(\Lambda)\cong  T_2(\m)\modd$. Hence, in the following, when we refer to the exact category ${\calH}(\calC)$ without emphasizing the exact structure, we mean $(\calH(\calC),\calE^{\rm deg})$. Although the exact structure $\calE^{\rm cw}$ is much smaller than $\calE^{\rm deg}$ in general, it is surprising that when $(\calH(\calC), \calE^{\rm deg})$ has almost split sequences, most of the almost split sequences are {\rm cw}-exact. Hence, when it comes to characterizing almost split sequences, $\calH(\calC)^{\rm cw}$ becomes a convenient tool.

\subsection{Functors on morphisms categories}\label{sec:fun_morph}
Let $F:\calC\to \calD$ be an additive covariant functor between additive categories $\calC$ and $\calD$. Then $F$ naturally induces an additive functor $\calH(F):\calH(\calC)\to \calH(\calD)$, which sends $\left(\begin{smallmatrix}A\\B\end{smallmatrix}\right)_f$ to 
$\left(\begin{smallmatrix}FA\\FB\end{smallmatrix}\right)_{Ff}$ and sends morphisms $(\alpha,\beta):f\to g$ to $(F\alpha, F\beta):Ff\to Fg$.

\begin{prop}\label{prop:HF_equiv}
(1) Let $F:\calC\to \calD$ be an additive functor. If $F$ is an equivalence, then so is $\calH(F)$. (2) Let $F:\calC\to \calD$ be an additive functor between exact categories.  Then $F$ preserves (reflects) exact sequences if and only if $\calH(F)$ preserves (reflects) exact sequences.
\end{prop}
\begin{proof}
 (1) It suffices to prove $\calH(F)$ is dense and fully-faithful. For the density of $\calH(F)$, let $\left(\begin{smallmatrix}A'_1\\A'_2\end{smallmatrix}\right)_g \in \calH(\calD)$. By the density of $F$, there are $A_1, A_2\in \calC$, such that $F(A_1)\cong A'_1, F(A_2)\cong A'_2$. Since $F$ is full, there is a homomorphism $f: A_1\to A_2$ such that $F(f)\cong g.$ So $\calH(F)$ is dense.
 \vskip5pt
  For the fullness of $\calH(F)$, let $\left(\begin{smallmatrix}A_1\\A_2\end{smallmatrix}\right)_f$ and $\left(\begin{smallmatrix}B_1\\B_2\end{smallmatrix}\right)_g$ be objects in $\calH(\calC)$ and $(\alpha', \beta'):Ff\to Fg$ be a homomorphism in  $\calH(\calD)$, i.e. $Fg\alpha'=\beta'Ff$. By the fullness of $F$, there are homomorphisms $\alpha:A_1\to B_1$ and $\beta:A_2\to B_2$ such that $F\alpha=\alpha'$ and $F\beta=\beta'$. Hence $FgF\alpha=F\beta Ff$. Since $F$ is a faithful functor, we have $g\alpha=\beta f$. That is to say,  $(\alpha,\beta):f\to g$ is a homomophism in $\calH(\calC)$ such that $F(\alpha,\beta)=(\alpha',\beta')$. Hence $\calH(F)$ is full. It is easy to see that the faithfulness of $\calH(F)$ is guaranteed by the faithfulness of $F$. This completes the proof of (1). Statement (2) is straightforward by the definition of the exact structure $\calE^{\rm deg}$.
\end{proof}

Let $\calC$ and $\calD$ be additive categories. Since any additive functor $F:\calC\to \calD$ preserves split exact sequences. It follows immediately from (2) that $\calH(F)$ is an exact functor $\calH(\calC)^{\rm cw}\to \calH(\calD)^{\rm cw}$.

Let $\calC$ be an exact category. Define the {\bf monomorphism category} $\mathcal S(\calC)$ (resp. {\bf epimorphism category} $\mathcal F(\calC)$) as the full subcategory of $(\mathcal H(\calC),\calE^{\rm deg})$, consisting of objects which are inflations (resp. deflations) in $\calC$.  
Notice that 
different exact structures on $\calC$ may result in different monomorphism (epimorphism) categories. 
 
There are a couple of functors:
\begin{eqnarray*}
\bKer:\calF(\calC) \to \calS(\calC),   &\bcok:\calS(\calC) \to\calF(\calC)  \\
\left(\begin{matrix}A\\B\end{matrix}\right)_f  \mapsto \left(\begin{matrix}\ker f\\A\end{matrix}\right)_i,
&\left(\begin{matrix}A\\B\end{matrix}\right)_f  \mapsto  \left(\begin{matrix}B\\\cok f\end{matrix}\right)_p
\end{eqnarray*}
where $i$ is the canonical embedding and $p$ is the canonical projection.
The following result is usually referred to as the Ringel-Schmidmeier-Simson (RSS) equivalence:
\begin{prop}\label{prop:RSS}
 Let $\calC$ be an exact category.
Additive functors $\bKer$ and $\bcok$ form a pair of inverse equivalences.
\end{prop}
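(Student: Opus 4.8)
The plan is to check that $\bcok$ and $\bKer$ are well-defined additive functors and then exhibit natural isomorphisms $\bKer\circ\bcok\cong\mathrm{id}_{\calS(\calC)}$ and $\bcok\circ\bKer\cong\mathrm{id}_{\calF(\calC)}$; everything is forced by the axioms of an exact category. For well-definedness: if $f\colon A\to B$ is an inflation it is the first map of some conflation, hence has a cokernel $p\colon B\to\cok f$, and $p$ is again a cokernel of $f$. Replacing the third term of the original conflation by $\cok f$ changes it only by an isomorphism, and since $\calE$ is closed under isomorphism, $A\xrightarrow{f}B\xrightarrow{p}\cok f$ is a conflation and $p$ a deflation, so $\bcok(f)\in\calF(\calC)$; dually $\bKer(g)\in\calS(\calC)$ for every deflation $g$. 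On morphisms, given $(\phi_1,\phi_2)\colon f\to f'$ in $\calS(\calC)$ one has $p'\phi_2 f=p'f'\phi_1=0$, so the universal property of the cokernel $p$ gives a unique $\bar\phi\colon\cok f\to\cok f'$ with $\bar\phi p=p'\phi_2$; put $\bcok(\phi_1,\phi_2):=(\phi_2,\bar\phi)$. Uniqueness of $\bar\phi$ yields functoriality and additivity of $\bcok$ at once, and $\bKer$ is treated dually, using that an inflation is a kernel of its cokernel.

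Next I would build the unit and counit. Fix $f\colon A\to B$ in $\calS(\calC)$ and write $\bcok(f)=(p\colon B\to\cok f)$ and $\bKer\bcok(f)=(\kappa\colon K\to B)$, where $\kappa$ is the inclusion of $\ker p$. Since $A\xrightarrow{f}B\xrightarrow{p}\cok f$ is a conflation, $f$ is \emph{a} kernel of $p$, and uniqueness of kernels gives a unique isomorphism $\theta_f\colon A\to K$ with $\kappa\theta_f=f$; then $(\theta_f,\mathrm{id}_B)\colon f\to\bKer\bcok(f)$ is an isomorphism in $\calS(\calC)$. Naturality of $\theta$ is a short chase: for $(\phi_1,\phi_2)\colon f\to f'$ the two candidate maps $A\to K'$ agree after composing with the monomorphism $\kappa'$, hence agree. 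Dually, for a deflation $g\colon A\to B$ one uses that $g$ is a cokernel of the inclusion $\ker g\hookrightarrow A$ to produce a natural isomorphism $\bcok\bKer(g)\cong g$ in $\calF(\calC)$. Together these say $\bcok$ and $\bKer$ are mutually quasi-inverse, hence inverse equivalences, and one can check the two natural isomorphisms satisfy the triangle identities if a strict statement of ``inverse'' is wanted.

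The content here is formal, so the only real ``obstacle'' is organizational: one must be disciplined about the canonical identifications ($C\cong\cok f$, $A\cong\ker p$, and so on) coming from uniqueness of (co)kernels, and verify that each square produced by a universal property genuinely commutes. The single structural ingredient — that an inflation is the kernel of its cokernel and a deflation is the cokernel of its kernel — is built into the definition of an exact structure, so no hypothesis on $\calC$ beyond being exact (and, for the functors to be defined, that inflations have cokernels and deflations have kernels, which is automatic) is used.
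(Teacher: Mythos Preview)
Your proof is correct and is exactly the standard argument one would give: well-definedness from the fact that inflations admit cokernels (and dually), functoriality from the universal properties, and the natural isomorphisms from the fact that in any exact category an inflation is the kernel of its cokernel and a deflation is the cokernel of its kernel. The paper itself does not supply a proof of this proposition; it simply records it as the well-known Ringel--Schmidmeier--Simson equivalence and moves on, so there is nothing to compare against beyond noting that your argument is the expected one.
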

As immediate consequences of the RSS equivalence. We see that $\bKer$ and $\bcok$  are exact equivalences which preserve indecomposable objects,  $\Ext$-projectives,  $\Ext$-injectives and almost split sequences.

As above, for an exact functor $F:\calC\to \calD$, we can consider the induced functors $\calS(F):\calS(\calC)\to \calS(\calD)$ and $\calF(F):\calF(\calC)\to \calF(\calD)$, which send $\left(\begin{smallmatrix}A\\B\end{smallmatrix}\right)_f$ to 
$\left(\begin{smallmatrix}FA\\FB\end{smallmatrix}\right)_{Ff}$ and send morphisms $(\alpha,\beta):f\to g$ to $(F\alpha, F\beta):Ff\to Fg$. It is easy to see that $\calS(F)$ and $\calF(F)$ are also exact.

\begin{prop}
    Let $F:\calC\to \calD$ be an exact functor.  If $F$ is an exact equivalence, then $\calS(F)$ and $\calF(F)$ are also exact equivalences. 
\end{prop}
\begin{proof}
    To show that $\calS(F)$ and $\calF(F)$ are fully-faithful is similar to the proof of Proposition \ref{prop:HF_equiv}, which we omit. To see $\calS(F)$ (or $\calF(F))$ is dense, notice that since $F$ is full and dense, for any object  $\left(\begin{smallmatrix}A\\B\end{smallmatrix}\right)_f$ in $\calS(\calD)$ (resp. $\calF(\calD)$), there is an object $\left(\begin{smallmatrix}X\\Y\end{smallmatrix}\right)_u$ in $\calH(\calC)$ such that $\left(\begin{smallmatrix}FX\\FY\end{smallmatrix}\right)_{Fu}\cong\left(\begin{smallmatrix}A\\B\end{smallmatrix}\right)_f$. But $u$ must be a inflation (resp. deflation) as $F$ reflects exact sequences. Clearly, both $\calS(F)$ and $\calF(F)$ preserve and reflect exact sequences. 
\end{proof}

\subsection{Almost split sequences in morphism categories}

In the following, we will characterize the almost split sequences in morphism categories.  

There are some special almost split sequences in the morphism categories which are easy to check by definitions. We call them the {\bf sporadic almost split sequences}.

\begin{lem} \label{lem:sporadic} 
	  Let $\calC$ be an exact category. For objects $A,C\in\calC$, let $p:P(C)\to C$ be the $\Ext$-projective cover and $e:A\to I(A)$ be the $\Ext$-injective envelope. 
Assume $\delta$ is  an almost split sequence in $\calC$, $p'$ is a lift of the  $\Ext$-projective cover $p$ and $e'$ is a lift of the  $\Ext$-injective envelope $e$.
	$$\xymatrix{&&&P(C)\ar[d]^p\ar[dl]_{p'}\\
	\delta:\ \ 0\ar[r]&A\ar[r]^f\ar[d]_e&B\ar[dl]^{e'}\ar[r]^g&C\ar[r]&0\\
	&I(A)}
	$$
	
	\begin{enumerate}

          \item \cite[Lemma 4.5]{HE} The following exact sequences are almost split in ${\calH}(\calC)$.
			$$\xymatrix@1{  0\ar[r] & {\left(\begin{smallmatrix} A\\ A\end{smallmatrix}\right)}_{1}
			\ar[rr]^-{\left(\begin{smallmatrix} 1 \\ f\end{smallmatrix}\right)}
			& & {\left(\begin{smallmatrix}A\\ B\end{smallmatrix}\right)}_{f}\ar[rr]^-{\left(\begin{smallmatrix} 0 \\ g\end{smallmatrix}\right)}& &
			{\left(\begin{smallmatrix}0\\ C\end{smallmatrix}\right)}_{0}\ar[r]& 0. } \ \    $$
            $$\xymatrix@1{  0\ar[r] & {\left(\begin{smallmatrix} A\\ 0\end{smallmatrix}\right)}_{0}
			\ar[rr]^-{\left(\begin{smallmatrix} f \\ 0 \end{smallmatrix}\right)}
			& & {\left(\begin{smallmatrix} B\\ C\end{smallmatrix}\right)}_{g}\ar[rr]^-{\left(\begin{smallmatrix} g \\ 1\end{smallmatrix}\right)}& &
			{\left(\begin{smallmatrix}C \\ C\end{smallmatrix}\right)}_{1}\ar[r]& 0. } \ \    $$

		\item  The following exact sequences are almost split in ${\calS}(\calC)$.
	
        $$\xymatrix@1{  0\ar[r] & {\left(\begin{smallmatrix} A\\ A\end{smallmatrix}\right)}_{1}
			\ar[rr]^-{\left(\begin{smallmatrix} 1 \\ f\end{smallmatrix}\right)}
			& & {\left(\begin{smallmatrix}A\\ B\end{smallmatrix}\right)}_{f}\ar[rr]^-{\left(\begin{smallmatrix} 0 \\ g\end{smallmatrix}\right)}& &
			{\left(\begin{smallmatrix}0\\ C\end{smallmatrix}\right)}_{0}\ar[r]& 0. } \ \    $$
			$$\xymatrix@1{  0\ar[r] & {\left(\begin{smallmatrix} A\\ I(A)\end{smallmatrix}\right)}_{e}
			\ar[rr]^-{\left(\begin{smallmatrix} f \\ {\left(\begin{smallmatrix} 1 \\ 0\end{smallmatrix}\right)}\end{smallmatrix}\right)}
			& & {\left(\begin{smallmatrix}B\\ I(A)\oplus C\end{smallmatrix}\right)}_{\left(\begin{smallmatrix}e'\\\end{smallmatrix}\right)}\ar[rr]^-{\left(\begin{smallmatrix} g \\ {\left(\begin{smallmatrix}1&0\end{smallmatrix}\right)}\end{smallmatrix}\right)}& &
			{\left(\begin{smallmatrix}C\\ C\end{smallmatrix}\right)}_{1}\ar[r]& 0. } \ \    $$		
		\item   The following exact sequences are almost split in ${\calF}(\calC)$.
        $$\xymatrix@1{  0\ar[r] & {\left(\begin{smallmatrix} A\\ 0\end{smallmatrix}\right)}_{0}
			\ar[rr]^-{\left(\begin{smallmatrix} f \\ 0 \end{smallmatrix}\right)}
			& & {\left(\begin{smallmatrix} B\\ C\end{smallmatrix}\right)}_{g}\ar[rr]^-{\left(\begin{smallmatrix} g \\ 1\end{smallmatrix}\right)}& &
			{\left(\begin{smallmatrix}C \\ C\end{smallmatrix}\right)}_{1}\ar[r]& 0. } \ \    $$
			$$\xymatrix@1{  0\ar[r] & {\left(\begin{smallmatrix} A\\ A\end{smallmatrix}\right)}_{1}
			\ar[rr]^-{\left(\begin{smallmatrix} [1,0]^T\\ f \end{smallmatrix}\right)}
			& & {\left(\begin{smallmatrix} A\oplus P(C)\\ B\end{smallmatrix}\right)}_{[f,p']}\ar[rr]^-{\left(\begin{smallmatrix} [0,1]\\ g\end{smallmatrix}\right)}& &
			{\left(\begin{smallmatrix}P(C)\\ C\end{smallmatrix}\right)}_{p}\ar[r]& 0. } \ \    $$		
 	\end{enumerate}
\end{lem}
\begin{proof}
As the proof is essentially the same as in the module category, we refer to \cite[Proposition 7.4]{RS2}.  
\end{proof}

\label{rem:nonspo_cw}
  
 Next, we show that non-sporadic almost split sequences in $\calH(\calC)$  (resp. ${\calS}(\calC)$, ${\calF}(\calC)$) are \rm{cw}-exact.

\begin{lem}\label{cor:dg_slplit_ almost_split}
 Let $\calC$ be an exact category and 
 $$\xymatrix@1{\delta: 0 \ar[r] & {\left(\begin{smallmatrix} X_1\\ X_2\end{smallmatrix}\right)}_{f}
		\ar[rr]^-{\left(\begin{smallmatrix} \phi_1 \\ \phi_2 \end{smallmatrix}\right)}
		& & {\left(\begin{smallmatrix}Z_1\\ Z_2\end{smallmatrix}\right)}_{h}\ar[rr]^-{\left(\begin{smallmatrix} \psi_1 \\ \psi_2\end{smallmatrix}\right)}& &
		{\left(\begin{smallmatrix}Y_1 \\ Y_2\end{smallmatrix}\right)}_{g}\ar[r]& 0  } \ \    $$
be an almost split sequence in ${\calH}(\calC)$ (resp. ${\calS}(\calC)$, ${\calF}(\calC)$). If $\delta$ is not isomorphic to one of the almost split sequences in Lemma \ref{lem:sporadic} (1) (resp. (2), (3)), then the sequences $0 \rt X_i\st{\phi_i}\rt Z_i\st{\psi_i} \rt Y_i\rt 0$, $i=1, 2$, split.
\end{lem}

\begin{proof}
    We just prove for the case $\delta\in\calH(\calC)$. Since $\delta$ is not isomorphic to one of the sequences in Lemma \ref{lem:sporadic} $(1)$, $g$ is not a split monomorphism. It is easy to check that  
    $\left(\begin{smallmatrix} 1 \\ g \end{smallmatrix}\right): \left(\begin{smallmatrix} Y_1\\ Y_1\end{smallmatrix}\right)_{1} \to {\left(\begin{smallmatrix}Y_1\\ Y_2\end{smallmatrix}\right)}_{g}
$ is not a split epimorphism. Hence $\left(\begin{smallmatrix} 1 \\ g \end{smallmatrix}\right)$ factors through ${\left(\begin{smallmatrix}Z_1\\ Z_2\end{smallmatrix}\right)}_{h}$ i.e. the top row splits. Similarly, $f$ is not a split epimorphism. Thus $\left(\begin{smallmatrix} f \\ 1 \end{smallmatrix}\right): \left(\begin{smallmatrix} X_1\\ X_2\end{smallmatrix}\right)_{f} \to {\left(\begin{smallmatrix}X_2\\ X_2\end{smallmatrix}\right)}_{1}
$ is not a split monomorphism and hence factors through ${\left(\begin{smallmatrix}Z_1\\ Z_2\end{smallmatrix}\right)}_{h}$. So the bottom row splits.

A similar argument as \cite[Corollary 7.6]{RS2} shows that if $\delta$ is an almost split sequences in $\calS(\calC)$ or $\calF(\calC)$, which is not in sporadic cases (2) and (3), then both rows of $\delta$ split.
\end{proof}
 
Now we discuss the key technique, which allows us to transfer cw-exact almost split sequences in $\calH(\calC)$ to almost split sequences in $\calC\modd$.

\begin{constr}\label{Construction-Morphism-Functor}\cite[Construction 4.1]{HE}
 Let $(X_1\stackrel{f}\rightarrow X_2)$ be an object of ${\calH}(\calC)$. Define
$$ (X_1\stackrel{f}\rightarrow X_2)\stackrel{\Theta}\mapsto\ \cok\Hom_{\calC}(-,f).$$	
If $\phi=\left(\begin{smallmatrix}
	\phi_1\\ \phi_2
	\end{smallmatrix}\right):X=\left(\begin{smallmatrix}
	X_1\\ X_2
	\end{smallmatrix}\right)_f\rightarrow \left(\begin{smallmatrix}
	Z_1\\ Z_2
	\end{smallmatrix} \right)_{h}=Z$  is a morphism in $\calH(\calC)$,  $\Theta (\phi)$ is defined to be the unique morphism that makes the bottom left square of the following Figure \ref{fig:functor} commutes.
\end{constr}

Let $\mathrm{Y}=(Y_1\st{g}\rt Y_2)$ be an indecomposable object in $\calH(\calC)$ which is neither a split monomorphism nor a split epimorphism. Take
	$$\delta:\,\,\xymatrix@1{  & {\left(\begin{smallmatrix} X_1\\ X_2\end{smallmatrix}\right)}_{f}
		\ar[rr]^-{\left(\begin{smallmatrix} \phi_1 \\ \phi_2 \end{smallmatrix}\right)}
		& & {\left(\begin{smallmatrix}Z_1\\ Z_2\end{smallmatrix}\right)}_{h}\ar[rr]^-{\left(\begin{smallmatrix} \psi_1 \\ \psi_2\end{smallmatrix}\right)}&&
		{\left(\begin{smallmatrix}Y_1 \\ Y_2\end{smallmatrix}\right)}_{g}&  } \ \    $$
to be the almost split sequence in $\calH(\calC)^{{\rm cw}}$ ending at $\mathrm{Y}$.	
For simplicity, set $\mathrm{Z}=(Z_1\st{h}\rt Z_2)$,  $\mathrm{X}=(X_1\st{f}\rt X_2)$, $\phi=(\phi_1, \phi_2)^T$ and $\psi=(\psi_1, \psi_2)^T$.	

Assume $\calC$ has pseudokernel. By
applying $\Theta$, one gets the commutative diagram with exact rows.
	\begin{figure}[h]
 $$
 \xymatrix{
		0 \ar[r] &(-, X_1)\ar[d]^{(-,f)} \ar[r]^{(-,\phi_1)} & (-, Z_1)
		\ar[d]^{(-,h)}\ar[r]^{(-,\psi_1)} & (-, Y_1) \ar[d]^{(-,g)} \ar[r] & 0\\
		0 \ar[r] &(-, X_2) \ar[d] \ar[r]^{(-,\phi_2)} & (-, Z_2)
		\ar[d] \ar[r]^{(-, \psi_2)} & (-, Y_2) \ar[d] \ar[r] & 0\\
		& \Theta(\mathrm{X}) \ar[r]^{\Theta(\phi)}\ar[d] & \Theta(\mathrm{Z}) \ar[r]^{\Theta(\psi)}\ar[d]  & \Theta(\mathrm{Y}) \ar[r]\ar[d] &0\\ & 0  & 0  & 0 &  }
  $$
 	    \caption{}
	    \label{fig:functor}
	\end{figure} 
 in $\calC\modd$ whose bottom row is indeed the image $\Theta(\delta)$ of $\delta$ under the functor $\Theta$.

\begin{thm}\label{almostpreserving}\cite[Lemma 4.8, Theorem 4.9]{HE}\label{thm:EH}
Let $\calC$ be an additive category having peudokernels. In the above diagram, 
\begin{enumerate}
    \item the map $\Theta(\phi)$ is a monomorphism. Thus, $ \Theta(\delta)$ is a short exact sequence in $ \calC\modd.$
\item $\Theta(\delta)$ is an almost split sequence in $\calC\modd $.
\end{enumerate}
\end{thm}

\subsection{Auslander-Reiten translation in functor categories}\label{sec:AR_fun}

Let $\calV$ be a Hom-finite $k$-linear Krull-Schmidt category. Denote by $\calV\Mod$ the category of additive contravariant functors from $\calV$ to abelian groups and $\calV\modd$ the subcategory consisting of finitely presented functors. Then $\calV$ is called a {\bf dualizing $k$-variety} \cite{AR74}
if the $k$-dual functors $D:\calV\Mod\to\calV^{\rm op}\Mod$ and $D:\calV^{\rm op}\Mod\to\calV\Mod$ given by $D(F)(C)=D(FC)$ for every object $C$ of $\calV$ and $F\in\calV\Mod$ or $\calV^{\rm op}\Mod$ induce dualities
\[D:\calV\modd\to\calV^{\rm op}\modd\ \mbox{ and }\ D:\calV^{\rm op}\modd\to\calV\modd.\] 

  Let $\calV$ be a dualizing $k$-variety. It turns out that $\calV\modd$ is an abelian subcategory of $\calV\Mod$ that admits enough projective and enough injective objects \cite[Theorem 2.4]{AR74}.
The following result shows a connection among almost split sequences, Auslander-Reiten duality, and dualizing vareities.
 \begin{thm}\cite[Corollary 4.5]{INP}\label{thm:ass_equiv_ars}
Let $\calC$ be a $k$-linear $\Ext$-finite Krull-Schmidt exact category with enough  $\Ext$-projectives and $\Ext$-injectives. Then the
following conditions are equivalent.
\begin{enumerate}
\item $\calC$ has almost split  sequences.
\item $\calC$ has an Auslander-Reiten-Serre duality.
\item The stable category $\underline \calC$ is a dualizing $k$-variety.
\item The stable category $\overline \calC$ is a dualizing $k$-variety.
\end{enumerate}
\end{thm}

 For a dualizing $k$-variety $\calV$, since the stable category $\calV\textendash\underline{\rm mod}$ is also a dualizing $k$-variety \cite[Proposition 6.2]{AR74}, we can immediately conclude that $\calV\modd$ has almost split sequences.
In the sequel, we will demonstrate how to calculate Auslander-Reiten translations in the functor category $\calV\modd$.

Following \cite{AR74}, we introduce the notion of transpose for finitely presented functors: Let
$$
(-,B)\stackrel{(-,f)}\to(-,A)\to F\to 0
$$
be a minimal projective presentation of $F\in\calV\modd$. Define the {\bf transpose} $\Tr F\in\calV^{\op}\modd$ as $\cok\Hom_\calV(f,-)$ i.e. there is a minimal projective presentation:
$$
(A,-)\stackrel{(f,-)}\to(B,-)\to \Tr F\to 0.
$$

In fact, $\Tr$ yields a functor $\Tr:\calV\pmodd\to\calV^{\op}\pmodd$. We have the following description of the Auslander-Reiten translation in the functor category $\calV\modd$. 

\begin{prop}\cite[Proposition 2.1]{AR94}\label{prop:DTr_fun}
Let $\calV$ be a dualizing $k$-variety. Then $\calV\modd$ has almost split sequences. Furthermore,
if $0\to F\to G\to H\to 0$ is an almost split sequence in $\calV\modd$, then $F\cong D{\rm Tr} H$.
\end{prop}

\section{Existence of almost split sequences}
In this section, we study homological finiteness and the existence of almost split sequences in morphism categories.

\subsection{Homological finiteness}
 Let $\calC$ be a functorially finite exact subcategory of an abelian category $\calA $ with enough $\Ext$-projective and $\Ext$-injective. Our goal is to show that $\calH(\calC)$, $\calS(\calC)$, $\calF(\calC)$ are functorially finite subcategories of $\calH(\calA)$. Furthermore, we will discuss when these categories have almost split sequences.
 First of all, we study the functorial  finiteness of $\calH(\calC)$.

 \begin{lem}\label{lem:homo_contra}
Let $\calC$ be a contravariantly (resp. covariantly) finite subcategory of an abelian category $\calA $. Then $\calH(\calC)$ is a contravariantly (resp. covariantly) finite subcategory of $\calH(\calA )$.
 \end{lem}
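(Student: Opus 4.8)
The plan is to establish the contravariant statement in detail and to obtain the covariant one by a dual argument. Since $\calC$ is a full subcategory of $\m\modd$, the morphism category $\calH(\calC)$ is a full subcategory of $\calH(\m)$, so it suffices to produce, for an arbitrary object $\left(\begin{smallmatrix}A_1\\ A_2\end{smallmatrix}\right)_g$ of $\calH(\m)$, a right $\calH(\calC)$-approximation. The naive attempt — choosing right $\calC$-approximations $p_i\colon C_i\to A_i$ of each $A_i$, lifting $g p_1$ through $p_2$ to a map $h\colon C_1\to C_2$, and forming the morphism $(p_1,p_2)\colon\left(\begin{smallmatrix}C_1\\ C_2\end{smallmatrix}\right)_h\to\left(\begin{smallmatrix}A_1\\ A_2\end{smallmatrix}\right)_g$ — does not work: when one factors the two components of a test morphism through $p_1$ and $p_2$ separately, the resulting maps need not commute with $h$ and $k$, because $p_2$ is in general not monic. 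The remedy is to build the top object with the help of a pullback.

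Concretely, I would first take a right $\calC$-approximation $p_2\colon C_2\to A_2$, then form the pullback
$$
\begin{tikzcd}
P\ar[r,"\pi_C"]\ar[d,"\pi_A"'] & C_2\ar[d,"p_2"]\\
A_1\ar[r,"g"'] & A_2
\end{tikzcd}
$$
in $\m\modd$, and finally take a right $\calC$-approximation $p'\colon C_1\to P$. Setting $p_1=\pi_A p'$ and $h=\pi_C p'$ one has $p_2 h = g p_1$, so $(p_1,p_2)$ is a morphism $\left(\begin{smallmatrix}C_1\\ C_2\end{smallmatrix}\right)_h\to\left(\begin{smallmatrix}A_1\\ A_2\end{smallmatrix}\right)_g$ in $\calH(\calC)$. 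To check that it is a right $\calH(\calC)$-approximation, I would take an arbitrary morphism $(q_1,q_2)\colon\left(\begin{smallmatrix}D_1\\ D_2\end{smallmatrix}\right)_k\to\left(\begin{smallmatrix}A_1\\ A_2\end{smallmatrix}\right)_g$ with $D_1,D_2\in\calC$, factor $q_2=p_2 s_2$ through $p_2$, note that $g q_1=q_2 k=p_2(s_2 k)$, and invoke the universal property of the pullback to get $t\colon D_1\to P$ with $\pi_A t=q_1$, $\pi_C t=s_2 k$; factoring $t=p' s_1$ through $p'$ then yields $p_1 s_1=q_1$ and $h s_1=\pi_C t=s_2 k$, and the latter identity says precisely that $(s_1,s_2)$ is a morphism in $\calH(\calC)$, with $(p_1,p_2)(s_1,s_2)=(q_1,q_2)$. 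Hence $\calH(\calC)$ is contravariantly finite in $\calH(\m)$.

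For the covariant case I would run the dual construction: starting from $\left(\begin{smallmatrix}A_1\\ A_2\end{smallmatrix}\right)_g$, take a left $\calC$-approximation $\iota_1\colon A_1\to C_1$, form the pushout of $g$ along $\iota_1$ in $\m\modd$, take a left $\calC$-approximation of that pushout, and repeat the same diagram chase with all arrows reversed to exhibit a left $\calH(\calC)$-approximation. The one genuinely delicate point, and the reason the naive degreewise recipe is insufficient, is arranging that the two comparison maps produced by the approximations in the two degrees assemble into a single morphism of $\calH(\calC)$; the (co)universal property of the pull-/pushback is exactly what forces this compatibility, and everything else is routine.
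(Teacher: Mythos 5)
Your construction (right $\calC$-approximation of the target, pullback along it, then right $\calC$-approximation of the pullback, with the verification via the universal property of the pullback) is exactly the argument the paper gives, only with different notation, and your dual treatment of the covariant case matches the paper's "similar" remark. The proposal is correct and essentially identical to the paper's proof.
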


 \begin{proof}
 We just show the contravariant finiteness. The covariant finiteness is similar.

 For any morphism $f:A\to B$ in $\calA$. Let $w:Y\to B$ be a right $\calC$-approximation of $B$ and take the pullback. We can obtain the following commutative diagram:
 $$
 \xymatrix{X\ar[r]^u\ar[d]_{v\circ u}&C\ar[r]^g\ar[d]^{\ \ (p.b.)}_v&Y\ar[d]^w\\ A\ar@{=}[r]&A\ar[r]^f&B,}
 $$
where $u$ is a right $\calC$-approximation of $C$. Now we check that $\left(\begin{smallmatrix}vu\\ w\end{smallmatrix}\right): {\left(\begin{smallmatrix}X\\ Y\end{smallmatrix}\right)}_{gu} \to \left(\begin{smallmatrix}A\\ B\end{smallmatrix}\right)_{f}$ is a right $\calH(\calC)$-approximation. Indeed, if there is a morphism $\left(\begin{smallmatrix}\phi_1\\ \phi_2\end{smallmatrix}\right): {\left(\begin{smallmatrix}X'\\ Y'\end{smallmatrix}\right)}_{h} \to \left(\begin{smallmatrix}A\\ B\end{smallmatrix}\right)_{f}$,
 then there is a morphism $s: Y'\to Y$ such that $ws=\phi_2$ following from the fact that $w$ is a right $\calC$-approximation. So $wsh=\phi_2h=f\phi_1$. By the universal property of pullback, we have $t: X'\to C$ such that $sh=gt,\ \phi_1=vt$. Since $u$ is a right $\calC$-approximation of $C$, there is a morphism $r: X'\to X$ such that $t=ur$. Hence $gur=gt=sh$ and $vur=vt=\phi_1$, i.e. $\left(\begin{smallmatrix}\phi_1\\ \phi_2\end{smallmatrix}\right) $ factors through  $\left(\begin{smallmatrix}vu\\ w\end{smallmatrix}\right)$.  
 \end{proof}

Next, we will discuss the homological finiteness of the monomorphism or epimorphism categories. We recall the first result essentially from \cite[Lemma 2.1]{RS2}.

\begin{lem}\label{lem:hom_fin_ab}
    Let $\calA$ be an abelian category, then 
\begin{enumerate}
    \item $\calS(\calA)$ is a covariantly finite subcategory of $\calH(\calA)$.
    \item $\calF(\calA)$ is a contravariantly finite subcategory of $\calH(\calA)$.
\end{enumerate} 
\end{lem}

Now we turn to study the homological finiteness of monomorphism or epimorphism categories of exact subcategories in an abelian category.
 
 \begin{lem}\label{lem:mimo_contra}
 Let $\calC$ be an exact subcategory  of an abelian category $\calA $. \\
 (1) If $\calC$ has enough $\Ext$-injective, then $\calS(\calC)$ is a contravariantly finite subcategory of $\calH(\calC)$.\\
 (2) If $\calC$ has enough $\Ext$-projective, then $\calF(\calC)$ is a covariantly finite subcategory of $\calH(\calC)$.
\end{lem}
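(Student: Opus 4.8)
The plan is to construct, for each object $f:A\to B$ in $\calH(\calC)$, an explicit right $\calS(\calC)$-approximation by a ``minimal monomorphism''-type construction, thereby proving (1); statement (2) then follows by the RSS-equivalence $\bKer:\calF(\calC)\to\calS(\calC)$ (Proposition~\ref{prop:RSS}), which is an exact equivalence interchanging the two situations together with the duality interchanging projectives and injectives. So I focus on (1). First I would take an injective envelope $j:A\to I$ in $\calC$, which exists since $\calC$ is Krull–Schmidt exact with enough injectives (Proposition~\ref{prop:KS_cover}). Then I form the morphism
$$
\binom{f}{0}:A\longrightarrow B\oplus I,
$$
wait---more precisely, I want the monomorphism $\binom{f}{j}:A\to B\oplus I$, which is a genuine inflation in $\calC$ because $j$ is an inflation and hence so is $\binom{f}{j}$ (its composite with the projection to $I$ is the inflation $j$, and a standard exact-category argument shows $\binom{f}{j}$ is itself an inflation). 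This object $\binom{f}{j}\in\calS(\calC)$ comes equipped with an obvious morphism $\bigl(\mathrm{id}_A,\,[\,1\ \ 0\,]\bigr):\binom{f}{j}\to f$ in $\calH(\calC)$, namely identity in the top row and the projection $B\oplus I\to B$ in the bottom row.

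Next I would verify that this morphism is a right $\calS(\calC)$-approximation. Given any monomorphism $g:X\to Y$ in $\calC$ together with a morphism $(\phi_1,\phi_2):g\to f$, i.e.\ $\phi_2 g = f\phi_1$, I must lift it through $\bigl(\mathrm{id}_A,[\,1\ 0\,]\bigr)$. The top component is forced to be $\phi_1$. For the bottom component I need a map $Y\to B\oplus I$ whose composite with $[\,1\ 0\,]$ is $\phi_2$ and which makes the square with $g$ and $\binom{f}{j}$ commute; equivalently I need $\psi_2=[\,\phi_2\ \ \rho\,]^T$ for some $\rho:Y\to I$ with $\rho g = j\phi_1$. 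But $g$ is a monomorphism (an inflation) in $\calC$ and $I$ is injective, so the map $j\phi_1:X\to I$ extends along $g$ to the desired $\rho:Y\to I$. This proves $\calS(\calC)$ is contravariantly finite in $\calH(\calC)$.

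The step I expect to require the most care is not the approximation property itself---that is essentially the classical submodule-category argument of Ringel–Schmidmeier transplanted to the exact setting---but rather the verification that $\binom{f}{j}:A\to B\oplus I$ is legitimately an \emph{inflation} in the exact category $\calC$, so that $\binom{f}{j}$ genuinely lies in $\calS(\calC)\subseteq(\calH(\calC),\calE^{\rm deg})$ and the approximating morphism lives in $\calH(\calC)$ with the inherited exact structure. One needs here the exact-category fact that if $\binom{f}{j}$ followed by a retraction (here the split projection $B\oplus I\to I$) is an inflation, then $\binom{f}{j}$ is an inflation; this is standard (see \cite{B}) but should be cited carefully. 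For part (2), once (1) is in hand, I would simply note that $\bKer$ and $\bcok$ form inverse exact equivalences, that $\bcok$ carries $\calF(\calC)$ onto $\calS(\calC)$, and that the whole setup for $\calF(\calC)\subseteq\calH(\calC)$ is obtained from that for $\calS(\calC)\subseteq\calH(\calC)$ by passing to $\calC^{\op}$, under which ``enough injectives'' becomes ``enough projectives'' and ``contravariantly finite'' becomes ``covariantly finite''; hence covariant finiteness of $\calF(\calC)$ in $\calH(\calC)$ follows formally.
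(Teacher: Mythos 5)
Your construction is essentially identical to the paper's: there one takes an inflation $e:A\to I$ with $I$ injective, forms the inflation $[f,e]^T:A\to B\oplus I$ together with the morphism $(\mathrm{id}_A,[1,0])$ down to $f$, and checks the right-approximation property exactly as you do, by extending $e\phi_1$ along the inflation $h$ using injectivity of $I$; part (2) is disposed of there as "similar" (the dual construction $[f,p]:A\oplus P\to B$), which amounts to the same formal duality you invoke, so your extra appeal to the RSS-equivalence is unnecessary. Two small cosmetic points: you do not need the injective \emph{envelope} (whose existence would require a Krull--Schmidt hypothesis not assumed in this lemma) --- any inflation into an injective, i.e.\ "enough injectives", suffices --- and the cleanest justification that $[f,j]^T$ is an inflation is the pushout lemma for exact categories (pushing out $j$ along $f$ yields a conflation $A\to B\oplus I\to P$, with $P\in\calC$ by extension-closedness), rather than the cancellation fact you cite, which in full generality requires weak idempotent completeness or an existing cokernel.
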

 \begin{proof}
 We just prove (1), (2) is similar. Let $f:X\to Y$ be a morphism in $\calC$. Since $\calC$ has enough $\Ext$-injectives, there is an inflation $[f,e]^T: X\to Y\oplus I$ in $\calH(\calC)$, where $I$ is an $\Ext$-injective object in $\calC$. Now we check that
 $$
 \xymatrix{X\ar@{=}[r]\ar[d]^{\left[\begin{smallmatrix}f\\ e\end{smallmatrix}\right]}&X\ar[d]^f\\Y\oplus I\ar[r]^{[1,0]}&Y}
 $$
 is a right $\calS(\calC)$-approximation. Indeed, if there is a morphism $\left(\begin{smallmatrix}\phi_1\\ \phi_2\end{smallmatrix}\right)  : {\left(\begin{smallmatrix}X'\\ Y'\end{smallmatrix}\right)}_{h} \to \left(\begin{smallmatrix}X\\ Y\end{smallmatrix}\right)_{f}$, where $h$ is an inflation, then
there is a morphism $e':Y'\to I$ such that $e'h=e\phi_1$. So $\left(\begin{smallmatrix}\phi_2 h\\ e'h\end{smallmatrix}\right)=\left(\begin{smallmatrix}f\phi_1\\ e\phi_1\end{smallmatrix}\right)$. That is to say, $\left(\begin{smallmatrix}\phi_1\\ \phi_2\end{smallmatrix}\right)$ factors through $\left(\begin{smallmatrix} X\\ Y\oplus I\end{smallmatrix}\right)$.
 \end{proof}

\begin{cor}\label{cor:sc_contr_fin}
(1) If $\calC$ is a contravariantly finite exact subcategory of an abelian category $\calA$ having enough $\Ext$-injective, then $\calS(\calC)$ is a contravariantly finite subcategory of $\calH(\calA)$ and $\calS(\calA)$.\\
(2) If $\calC$ is a covariantly finite exact subcategory of an abelian category $\calA$ having enough $\Ext$-projective, then $\calF(\calC)$ is a covariantly finite subcategory of $\calH(\calA)$ and $\calF(\calA)$.
\end{cor}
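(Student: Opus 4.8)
The plan is to deduce this corollary formally from the two preceding lemmas, using only the elementary fact that contravariant (and covariant) finiteness behaves well with respect to chains of full subcategories. I will carry out part~(1); part~(2) is obtained by dualising every arrow in the argument.

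First I would record two routine principles. \emph{Transitivity}: if $\calA\subseteq\calB\subseteq\calD$ are full subcategories, $\calA$ is contravariantly finite in $\calB$, and $\calB$ is contravariantly finite in $\calD$, then $\calA$ is contravariantly finite in $\calD$; indeed, for $D\in\calD$ one composes a right $\calB$-approximation $B\to D$ with a right $\calA$-approximation $A\to B$, and any morphism into $D$ from an object of $\calA$ factors first through $B\to D$ (being in particular a morphism into $D$ from an object of $\calB$) and then through $A\to B$. \emph{Restriction}: if $\calA\subseteq\calB\subseteq\calD$ are full subcategories and $\calA$ is contravariantly finite in $\calD$, then $\calA$ is already contravariantly finite in $\calB$, because a right $\calA$-approximation in $\calD$ of an object $B\in\calB$ has its source in $\calA\subseteq\calB$ and its universal property only refers to objects of $\calA$ together with $B$, all of which lie in the full subcategory $\calB$.

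With these in hand the proof is immediate. For the first assertion of~(1), apply transitivity to $\calS(\calC)\subseteq\calH(\calC)\subseteq\calH(\m)$: Lemma~\ref{lem:mimo_contra}(1) (using that $\calC$ has enough injectives) shows $\calS(\calC)$ is contravariantly finite in $\calH(\calC)$, and the earlier lemma asserting that $\calH(\calC)$ is contravariantly finite in $\calH(\m)$ when $\calC$ is contravariantly finite in $\m\modd$ supplies the second link. For the second assertion, $\calS(\calC)\subseteq\calS(\m)\subseteq\calH(\m)$ is again a chain of full subcategories, so restriction gives that $\calS(\calC)$ is contravariantly finite in $\calS(\m)$ as well. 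Part~(2) follows by the same scheme with ``contravariantly'' replaced by ``covariantly'' throughout, using the covariant half of that earlier lemma for $\calH(\calC)\subseteq\calH(\m)$ and Lemma~\ref{lem:mimo_contra}(2) for $\calF(\calC)\subseteq\calH(\calC)$, together with $\calF(\calC)\subseteq\calF(\m)\subseteq\calH(\m)$. There is no real obstacle here: the mathematical content lives entirely in the two preceding lemmas, and the only point requiring (minor) care is that ``contravariantly finite subcategory'' must be read as ``contravariantly finite \emph{full} subcategory'', so that approximations and their factorisation properties transport verbatim along the inclusions.
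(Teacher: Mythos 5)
Your proposal is correct and follows essentially the same route as the paper: combine the lemma that $\calH(\calC)$ is contravariantly finite in $\calH(\m)$ with Lemma \ref{lem:mimo_contra} via transitivity along $\calS(\calC)\subseteq\calH(\calC)\subseteq\calH(\m)$, and then restrict along $\calS(\calC)\subseteq\calS(\m)\subseteq\calH(\m)$ to get contravariant finiteness in $\calS(\m)$ (the paper's ``hence it is clearly contravariantly finite in $\calS(\m)$''). Your only addition is to spell out the transitivity and restriction principles explicitly, which the paper leaves implicit.
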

\begin{proof}
We prove (1), (2) is similar. Consider the inclusions of subcategories  $\calS(\calC)\subseteq \calH(\calC)\subseteq\calH(\calA)$, 
By Lemmas \ref{lem:homo_contra} and \ref{lem:mimo_contra}, the contravariant finiteness of $\calS(\calC)$ in $\calH(\calA)$ follows from the fact that contravariant finiteness is transitive. Hence it is clearly contravaraintly finite in $\calS(\calA)$. 
\end{proof}

To show the covariant finiteness of $\calS(\calC)$ and contravariant finiteness of $\calF(\calC)$, we need to apply the RSS-equivalence.

 \begin{lem}\label{lem:sc_cov_fin}
 (1) If $\calC$ is a contravariantly finite exact subcategory of  an abelian category $\calA$ having enough  $\Ext$-injective, then $\calF(\calC)$ is a contravariantly finite subcategory of $\calF(\calA)$.\\
 (2) If $\calC$ is a covariantly finite exact subcategory of an abelian category $\calA$ having enough $\Ext$-projective, then $\calS(\calC)$ is a covariantly finite subcategory of $\calS(\calA )$.
 \end{lem}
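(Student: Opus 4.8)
The plan is to deduce this from Corollary \ref{cor:sc_contr_fin} by transporting (co)variant finiteness across the RSS-equivalence of Proposition \ref{prop:RSS}, rather than constructing approximations by hand. For part (1), the hypotheses on $\calC$ are precisely those of Corollary \ref{cor:sc_contr_fin}(1), so $\calS(\calC)$ is already a contravariantly finite subcategory of $\calS(\m)$. I would then observe that the RSS-equivalence $\bcok\colon\calS(\m)\to\calF(\m)$ (Proposition \ref{prop:RSS} applied to $\m\modd$) restricts to the RSS-equivalence $\bcok\colon\calS(\calC)\to\calF(\calC)$, compatibly with the inclusions $\calS(\calC)\hookrightarrow\calS(\m)$ and $\calF(\calC)\hookrightarrow\calF(\m)$. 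The only thing needing verification here is that the cokernel of an inflation formed inside $\calC$ agrees with its cokernel formed inside $\m\modd$, which is immediate because a conflation of $\calC$ is in particular a short exact sequence of $\m$-modules; the same remark shows $\bKer\colon\calF(\calC)\to\calS(\calC)$ is the restriction of $\bKer\colon\calF(\m)\to\calS(\m)$, so we genuinely identify the pair $(\calS(\calC)\subseteq\calS(\m))$ with $(\calF(\calC)\subseteq\calF(\m))$.

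Next I would record the elementary fact that an equivalence of categories sends contravariantly finite subcategories to contravariantly finite subcategories (and covariantly finite to covariantly finite). Concretely, if $F\colon\calB\to\calB'$ is an equivalence restricting to an equivalence between full subcategories $\calA\subseteq\calB$ and $\calA'\subseteq\calB'$, and $g\colon A\to X$ is a right $\calA$-approximation of $X$, then $F(g)\colon F(A)\to F(X)$ is a right $\calA'$-approximation of $F(X)$: given $h\colon A'\to F(X)$ with $A'\in\calA'$, choose $A_0\in\calA$ with $F(A_0)\cong A'$, lift $h$ along $F$ to a morphism $A_0\to X$ in $\calB$ (using fullness of $F$), factor it through $g$, and apply $F$ to obtain the desired factorization of $h$; since every object of $\calB'$ is isomorphic to one of the form $F(X)$, this proves contravariant finiteness, and the covariant statement is dual. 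Applying this with $F=\bcok$ yields part (1).

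For part (2) I would run the mirror-image argument: under the present hypotheses Corollary \ref{cor:sc_contr_fin}(2) gives that $\calF(\calC)$ is covariantly finite in $\calF(\m)$, and transporting this along the restricted RSS-equivalence $\bKer\colon\calF(\m)\to\calS(\m)$ (whose restriction $\calF(\calC)\to\calS(\calC)$ uses that the kernel of a deflation of $\calC$ lies in $\calC$) shows $\calS(\calC)$ is covariantly finite in $\calS(\m)$. I do not anticipate a genuine obstacle: the only mild point of care is the compatibility of the RSS-equivalence with the two ambient embeddings, and that is forced by $\calC$ being an exact subcategory of $\m\modd$; everything else is formal.
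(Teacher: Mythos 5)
Your proposal is correct and is essentially the paper's own argument in functorial dress: the paper likewise quotes Corollary \ref{cor:sc_contr_fin}, takes a left $\calF(\calC)$-approximation of the cokernel of $f$ and then passes to kernels --- which is exactly applying $\bKer$ --- verifying the approximation property by the diagram chase that your abstract ``equivalences transport approximations'' lemma packages. The one point genuinely needing care, namely that conflations of $\calC$ are short exact sequences of $\m$-modules so that the ambient RSS-equivalence restricts compatibly to $\calS(\calC)\leftrightarrow\calF(\calC)$, is addressed correctly in your write-up.
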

\begin{proof}
We proof (2), (1) is similar.

Let $0\to A\stackrel{f}\to B\stackrel{g}\to C\to 0$ be a conflation in $\mathcal C$. We need to construct a left $\calS(\calC)$-approximation of $f$. Due to Corollary \ref{cor:sc_contr_fin}, $\calF(\calC)$ is a covariantly finite subcategory of $\calF(\calA)$. So $g$ admits a left $\calF(\calC)$-approximation $\left(\begin{smallmatrix}\phi_2\\ \phi_3\end{smallmatrix}\right)  : {\left(\begin{smallmatrix}B\\ C\end{smallmatrix}\right)}_{g} \to \left(\begin{smallmatrix}Y\\ Z\end{smallmatrix}\right)_{v}$. Passing to the kernels we obtain a commutative diagram:
$$
\xymatrix{0\ar[r]&A\ar[r]^f\ar[d]^{\phi_1}&B\ar[r]^g\ar[d]^{\phi_2}&C\ar[r]\ar[d]^{\phi_3}&0\\
0\ar[r]&X\ar[r]^u&Y\ar[r]^v&Z\ar[r]&0.}
$$

Now we check that
$\left(\begin{smallmatrix}\phi_1\\ \phi_2\end{smallmatrix}\right)  : {\left(\begin{smallmatrix}A\\ B\end{smallmatrix}\right)}_{f} \to \left(\begin{smallmatrix}X\\ Y\end{smallmatrix}\right)_{u}$ is a left $\calS(\calC)$-approximation. Indeed, let
$\left(\begin{smallmatrix}\alpha\\ \beta\end{smallmatrix}\right)  : {\left(\begin{smallmatrix}A\\ B\end{smallmatrix}\right)}_{f} \to \left(\begin{smallmatrix}M\\ N\end{smallmatrix}\right)_{s}$ be a morphism in $\calS(\calC)$. We have a commutative diagram
$$
\xymatrix{0\ar[r]&A\ar[r]^f\ar[d]^{\alpha}&B\ar[r]^g\ar[d]^{\beta}&C\ar[r]\ar[d]^{\gamma}&0\\
0\ar[r]&M\ar[r]^s&N\ar[r]^{t}&L\ar[r]&0,}$$
with $\left(\begin{smallmatrix}\beta\\ \gamma\end{smallmatrix}\right)$ factors through $\left(\begin{smallmatrix}\phi_2\\ \phi_3\end{smallmatrix}\right)$. This, together with the property of kernel and its universal property, guarantees that $\left(\begin{smallmatrix}\alpha\\ \beta\end{smallmatrix}\right)$ factors through $\left(\begin{smallmatrix}\phi_1\\ \phi_2\end{smallmatrix}\right)$. Hence it is a left $\calS(\calC)$-approximation.
\end{proof}

\begin{thm}\label{thm:sc_ff}
Let $\calC$ be a functorially finite exact subcategory of an abelian category $\calA$ having enough  $\Ext$-projective and $\Ext$-injective. Then $\calS(\calC)$ and $\calF(\calC)$ are functorially finite subcategories of $\calH(\calA)$. 
 
\end{thm}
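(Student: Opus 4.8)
The plan is to assemble Theorem~\ref{thm:sc_ff} from the approximation lemmas already established, using only two further formal ingredients: the transitivity of contravariant and covariant finiteness, and the classical fact that $\calS(\m)=\bmono(\m)$ and $\calF(\m)=\bepi(\m)$ are themselves functorially finite (indeed resolving/coresolving) subcategories of the module category $\calH(\m)\cong T_2(\m)\modd$.

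\textbf{Functorial finiteness.} First I would note that, since $\calC$ is functorially finite in $\m\modd$, it is simultaneously contravariantly and covariantly finite, so the hypotheses of Corollary~\ref{cor:sc_contr_fin} and Lemma~\ref{lem:sc_cov_fin} are met. Corollary~\ref{cor:sc_contr_fin}(1) then gives that $\calS(\calC)$ is contravariantly finite in $\calH(\m)$, while Lemma~\ref{lem:sc_cov_fin}(2) gives that $\calS(\calC)$ is covariantly finite in $\calS(\m)$. Combining the latter with the functorial finiteness of $\calS(\m)$ in $T_2(\m)\modd$ and transitivity of covariant finiteness along $\calS(\calC)\subseteq\calS(\m)\subseteq\calH(\m)$, I conclude that $\calS(\calC)$ is covariantly finite, hence functorially finite, in $\calH(\m)$. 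The argument for $\calF(\calC)$ is dual: Corollary~\ref{cor:sc_contr_fin}(2) yields covariant finiteness in $\calH(\m)$, and Lemma~\ref{lem:sc_cov_fin}(1) together with the functorial finiteness of $\calF(\m)$ in $T_2(\m)\modd$ and transitivity along $\calF(\calC)\subseteq\calF(\m)\subseteq\calH(\m)$ yields contravariant finiteness in $\calH(\m)$.

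\textbf{Almost split sequences.} For the clause ``hence they have almost split sequences'' I would first verify that $\calS(\calC)$ and $\calF(\calC)$ are extension-closed in $\calH(\m)\cong T_2(\m)\modd$ and that the restriction of the abelian exact structure of $T_2(\m)\modd$ coincides with $\calE^{\rm deg}$ on them. Indeed, a short exact sequence $0\to f'\to f\to f''\to 0$ in $T_2(\m)\modd$ is precisely a degreewise short exact sequence of $\m$-modules; if $f',f''\in\calS(\calC)$, then each degree of $f$ lies in $\calC$ because $\calC$ is extension-closed in $\m\modd$, $f$ is itself a monomorphism by a routine diagram chase, and $\cok f$ lies in $\calC$ by the snake lemma applied to the three cokernels, so $f\in\calS(\calC)$; the verification for $\calF(\calC)$ is entirely analogous, using kernels in place of cokernels. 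Since $T_2(\m)\modd$ is a $\Hom$-finite Krull--Schmidt abelian category, I then invoke the Auslander--Smal{\o} theorem on almost split sequences in subcategories: an extension-closed functorially finite subcategory of a module category has (right and left) almost split sequences, and these almost split sequences stay inside the subcategory, hence are $\calE^{\rm deg}$-conflations in our case. This gives that $\calS(\calC)$ and $\calF(\calC)$ have almost split sequences.

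\textbf{Main obstacle.} I expect the only non-formal point to be the appeal to the functorial finiteness of $\calS(\m)$ and $\calF(\m)$ inside $T_2(\m)\modd$, which is what upgrades the relative finiteness produced by Lemma~\ref{lem:sc_cov_fin} to finiteness inside all of $\calH(\m)$; this is standard (for instance $\bmono(\m)$ is resolving in $T_2(\m)\modd$ and admits explicit approximations via $\Mimo$), but it should be cited rather than reproved here. The remaining work --- transitivity of approximations and the extension-closedness check --- is routine bookkeeping.
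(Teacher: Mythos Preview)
Your proposal is correct and follows essentially the same route as the paper: you combine Corollary~\ref{cor:sc_contr_fin} and Lemma~\ref{lem:sc_cov_fin} with the transitivity of (co)contravariant finiteness along $\calS(\calC)\subseteq\calS(\m)\subseteq\calH(\m)$ (resp.\ $\calF(\calC)\subseteq\calF(\m)\subseteq\calH(\m)$), citing the functorial finiteness of $\calS(\m)$ and $\calF(\m)$ in $T_2(\m)\modd$, and then invoke Auslander--Smal{\o} after noting extension-closedness. The only difference is that you spell out the extension-closedness verification via the snake lemma, whereas the paper simply asserts it as clear.
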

\begin{proof}
Consider inclusions of subcategories $\calS(\calC)\subseteq \calS(\calA)\subseteq \calH(\calA)$ and $\calF(\calC)\subseteq \calF(\calA)\subseteq \calH(\calA)$. Due to Lemma \ref{lem:hom_fin_ab} and \ref{lem:mimo_contra}, both $\calS(\calA)$ and $\calF(\calA)$ are functorially finite subcategories of $\calH(\calA)$. Then the assertions follows immediately from Corollary \ref{cor:sc_contr_fin} and Lemma \ref{lem:sc_cov_fin} as well as the transitivity of functorially finiteness. 
\end{proof}

A crucial case to be investigated is for $\calV$ being a dualizing $k$-variety. Denote by $\cP(\calV\modd)$ the full subcategory of representable functors in $\calV\modd$. Recall that the Yoneda functor $Y:\calV\to \cP(\calV\modd)$, sending an object $C\in\calV$ to the representable functor $(-, C)$, is an equivalence of additive categories. Therefore a dualizing $k$-variety $\calV$ can be viewed as a subcategory of the abelian category $\calV\modd$.

\begin{cor}
Let $\calV$ be a dualizing $k$-variety. Then categories $\calH(\calV\modd)$, $\calH(\cP(\calV\modd))$, $\calH(\calV)$ are dualzing $k$-varieties as well.
\end{cor}
\begin{proof}
According to \cite[Proposition 4.2]{AR75}, $\calH(\calV\modd)$ is a dualizing $k$-variety. Since $\cP(\calV\modd)$ is functorially finite in $\calV\modd$ (see \cite[Lemma 2.5]{IJ}), it follows by Lemma \ref{lem:homo_contra} that $\calH(\cP(\calV\modd))$ is functorially finite in $\calH(\calV\modd)$. Thus, it is a dualizing $k$-variety. Notice that $\calV\cong \cP(\calV\modd)$ via the Yoneda functor, it follows that $\calH(\calV)\cong\calH(\cP(\calV\modd))$ is a dualizing $k$-variety.
\end{proof}

However, we emphasize that $Y$ is usually not an exact functor. So $\calV$ usually cannot be viewed as an exact subcategory of $\calV\modd$.
In fact, $\cP(\calV\modd)$ is an exact subcategory of $\calV\modd$ with the exact structure containing only the split exact sequences. Let $\calV^{\rm split}$ be the additive category $\calV$ with the exact structure containing only split exact sequences. Then the equivalence $Y:\calV^{\rm split}\to \cP(\calV\modd)$ preserves and reflects exact sequences. Due to Proposition \ref{prop:HF_equiv}, we have the following conclusion.

\begin{prop}\label{prop:yoneda_cw}
 Let $\calV$ be a dualizing variety. Then the Yoneda embedding induces an equivalence
 $ \calH(Y):\calH(\calV)^{\rm cw} \to \calH(\cP(\calV\modd)),$ where $\calH(Y)$ preserves and reflects exact sequences.
\end{prop}

Thus $\calH(\calV)^{\rm cw}$ can be identified with $\calH(\cP(\calV\modd))$ as an exact full subcategory of $\calH(\calV\modd)$ via the functor $\calH(Y)$.

Before we continue to discuss the existence of almost split sequences, it is worth mentioning that the homological finiteness of subcategories and being a dualizing $k$-variety do not depend on the exact structure on these categories i.e. When we say that $\calH(\calC)$ is functorially finite in $\calH(\calA)$, it means that both $\calH(\calC)^{\rm cw}$ and $\calH(\calC)^{\rm deg}$ are functorially finite in $\calH(\calA)$ as an additive subcategory. However, the almost split structures will depend on the exact structures on each category.

\subsection{Existence of almost split sequences}

\begin{thm} \label{C-iff-H(C)}
    Let $\calV$ be a dualizing $k$-variety. Then $\calH(\calV)$ has almost split sequences if and only if $\calV$ has almost split sequences.
\end{thm}
\begin{proof}
  Assume that $\calH(\calV)$ has almost split sequences. For a non-projective object $C\in\calV$,  consider the almost split sequence ending at the object $\left(\begin{smallmatrix}0\\ C\end{smallmatrix}\right)_{0}$.
			$$\xymatrix@1{  0\ar[r] & {\left(\begin{smallmatrix} D\\ A\end{smallmatrix}\right)}_{g}
			\ar[rr]^-{\left(\begin{smallmatrix} 1 \\ \alpha\end{smallmatrix}\right)}
			& & {\left(\begin{smallmatrix}D\\ B\end{smallmatrix}\right)}_{f}\ar[rr]^-{\left(\begin{smallmatrix} 0 \\ \beta\end{smallmatrix}\right)}& &
			{\left(\begin{smallmatrix}0\\ C\end{smallmatrix}\right)}_{0}\ar[r]& 0. } \ \    $$		
It is straightforward to check that the bottom row $0 \rt  A\st{\alpha} \rt B\st{\beta} \rt C \rt 0$ is an almost split sequence. 
Similarly, consider the almost split sequence starting at $\left(\begin{smallmatrix} A \\ 0 \end{smallmatrix}\right)$:
$$\xymatrix@1{  0\ar[r] & {\left(\begin{smallmatrix} A\\ 0\end{smallmatrix}\right)}_{0}
			\ar[rr]^-{\left(\begin{smallmatrix} \alpha \\ 0 \end{smallmatrix}\right)}
			& & {\left(\begin{smallmatrix} B\\ D\end{smallmatrix}\right)}_{f}\ar[rr]^-{\left(\begin{smallmatrix} \beta \\ 1\end{smallmatrix}\right)}& &
			{\left(\begin{smallmatrix}C \\ D\end{smallmatrix}\right)}_{g}\ar[r]& 0. } \ \    $$
It is straightforward to check that the upper row $0 \rt  A\st{\alpha} \rt B\st{\beta} \rt C \rt 0$ is an almost split sequence.             

The converse implication is essentially inferred in the proof of \cite[Proposition 4.6]{HE}: Assuming $\calV$ is a dualizing $k$-variety, then $\calH(\calV)^{\rm cw}$ has almost split sequences due to  \cite[Proposition 4.6]{HE}. Consider an indecomposable non-projective object $\left(\begin{smallmatrix}X \\ Y\end{smallmatrix}\right)_h$ in $\calH(\calV)$. If $h$ is not a split monomorphism, then it is non-projective in $\calH(\calV)^{\rm cw}$. Thus there is an almost split sequence ending at $\left(\begin{smallmatrix}X \\ Y\end{smallmatrix}\right)_h$ in $\calH(\calV)^{\rm cw}$. By Lemma \ref{exact-AR}, it is an almost split sequence in $\calH(\calV)$ too. Otherwise, if $h$ is a split monomorphism, then the almost split sequence ending at $\left(\begin{smallmatrix}X \\ Y\end{smallmatrix}\right)_h$ is given by Lemma \ref{lem:sporadic}. So $\calH(\calV)$ has right almost split sequences. Similarly, one can show that it has left almost split sequences.   
\end{proof}

\begin{cor}\label{lem:Cmod_AR}
    Let $\calV$ be a dualizing $k$-variety. Then $\calH(\calV\modd)$ has almost split sequences.
\end{cor} 
\begin{proof}
    Notice that by Proposition \ref{prop:DTr_fun}, $\calV\modd$ has almost split sequences. Then the assertion follows 
by Theorem \ref{C-iff-H(C)}.
\end{proof}

 We recall a criterion for the existence of almost split sequences due to Liu, Ng and Paquette, which is a generalization of \cite[Theorem 2.4]{AS}.
\begin{lem}\cite[Corollary 3.6]{LNP} \label{C:AR-sequence}
Let $\calA$ be an exact $R$-category with almost split sequences where $R$ is a commutative ring, and $\calC$ be a functorially finite subcategory of $\calA$ and closed under extensions, then $\calC$ has almost split sequences.
\end{lem} 

 Now we present the main result for the existence of almost split sequences in monomorphism categories.

 \begin{thm}\label{thm:sc_ar}
Let $\calV$ be a dualizing $k$-variety and $\calC$ be a functorially finite exact subcategory of $\calV\modd$ having enough $\Ext$-projective and $\Ext$-injective objects. Then $\calS(\calC)$ and $\calF(\calC)$ have almost split sequences.  
\end{thm}
\begin{proof}
    Due to Corollary \ref{lem:Cmod_AR}, $\calH(\calV\modd)$ has almost split sequences. Also due to Theorem \ref{thm:sc_ff}, $\calS(\calC)$ and $\calF(\calC)$ are functorially finite, extension closed subcategories of $\calH(\calV\modd)$. Thus by Lemma \ref{C:AR-sequence}, they have almost split sequences.
\end{proof}
 
\section{Minimal monomorphisms and the stable category}\label{sec:mimo}

Throughout this section, let $\calC$ be a Krull-Schmidt exact category with enough  $\Ext$-injectives. We will introduce the notion of minimal monomorphisms for morphisms in $\calC$ and $\overline\calC$. 
One should compare these results with \cite[\S4\&\S5]{RS2}, where the minimal monomorphisms are defined in the module categories. We remind the reader about the main difference here: in an exact category, a morphism does not necessarily have a kernel.  
 
\subsection{The minimal monomorphism functor}
 Let $f:A\to B\in \calH(\calC)$. As shown in Lemma \ref{lem:mimo_contra}, $f$ has a right $\calS(\calC)$-approximation $\Mimo f:A\stackrel{[f,e]^T}\to B\oplus I_A$, where $e:A\to I_A$ is an arbitrary monomorphism with $I_A$ $\Ext$-injective. In addition, there is an exact sequence $$0\to i_A\to \Mimo f \to f\to 0,$$ where $i_A: 0\to I_A$ is an $\Ext$-injective object in $\calS(\calC)$. Notice that the construction of $\Mimo f$ as an object in $\calS(\calC)$ depends on the choice of $e$.
 
 \medskip

Let $\alpha: {\left(\begin{smallmatrix} A\\ B\end{smallmatrix}\right)}_{f}\to {\left(\begin{smallmatrix} C\\ D\end{smallmatrix}\right)}_{g}$ be a morphism in $\calH(\calC)$. For fixed $\Mimo f $ and $\Mimo g $, since $\Mimo g $ is a right $\calS(\calC)$-approximation of $g$, there is a homomorphism $\Mimo(\alpha):\ \Mimo f \to \Mimo g $ such that the following diagram commutes, 
 \begin{align*}
\xymatrix { 0 \ar[r] & i_{A}\ar[d]^{h} \ar[r]^{\sigma_f\qquad }&\Mimo f \ar[r]^{\qquad \pi_f}\ar[d]^{ \Mimo(\alpha)}&f \ar[r] \ar[d]^{\alpha}& 0    \\ 0 \ar[r] & i_{C} \ar[r]^{\sigma_g \qquad}&\Mimo g \ar[r]^{\qquad\pi_g} &g \ar[r] & 0.   }
\end{align*}
 Also notice that $\Mimo(\alpha)$ is not uniquely determined in $\calS(\calC)$. So $\Mimo$ is not a functor $\calH(\calC)\to \calS(\calC)$. However, it does give rise to a functor $\calH(\calC)\to \overline{\calS(\calC)}$, which we are going to define now.

\begin{propdef}\label{mono-functor}
 For any object $f\in\calH(\calC)$ and any morphism $\alpha:f\to g\in\calH(\calC)$, the construction $\Mimo$ gives rise to a unique object $\Mimo f $ and a unique morphism $\Mimo(\alpha)$ up to isomorphisms in $\overline{\calS(\calC)}$. Hence $\Mimo: \calH(\calC)\to \overline{\calS(\calC)}$ is a functor.
\end{propdef}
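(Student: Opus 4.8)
The plan is to isolate the two sources of non-uniqueness in the construction of $\Mimo$---the choice of the monomorphism $e:A\to I_A$ used to form $\Mimo f$, and the choice of the lift $\Mimo(\alpha)$---and to show that each of them disappears after passing to the costable category $\overline{\calS(\calC)}$, because the discrepancy between any two admissible choices always factors through an object of the form $(0\to I)$ with $I$ injective in $\calC$, and such an object is injective in $\calS(\calC)$, hence is zero in $\overline{\calS(\calC)}$. The two ingredients I would use repeatedly are: (i) $\pi_f:\Mimo f\to f$ is a right $\calS(\calC)$-approximation (Lemma \ref{lem:mimo_contra}), so every morphism from an object of $\calS(\calC)$ into $f$ factors through $\pi_f$; and (ii) in the exact category $\calS(\calC)$ the inflation $\sigma_f:i_A\to\Mimo f$ coming from the conflation $0\to i_A\to\Mimo f\to f\to 0$ is a kernel of $\pi_f$, so any morphism into $\Mimo f$ annihilated by $\pi_f$ factors through $\sigma_f$, and thus through the injective object $i_A$.

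First I would settle the object. Given monomorphisms $e:A\to I_A$ and $e':A\to I_A'$ with injective targets, let $\pi_f:\Mimo f\to f$ and $\pi'_f:(\Mimo f)'\to f$ be the two resulting right $\calS(\calC)$-approximations. Since both are right approximations and both sources lie in $\calS(\calC)$, mutual factoring yields $\theta:\Mimo f\to(\Mimo f)'$ and $\theta':(\Mimo f)'\to\Mimo f$ with $\pi'_f\theta=\pi_f$ and $\pi_f\theta'=\pi'_f$. Then $\pi_f(\theta'\theta-\mathrm{id})=0$, so $\theta'\theta-\mathrm{id}$ factors through $\sigma_f:i_A\to\Mimo f$ and hence is $0$ in $\overline{\calS(\calC)}$; symmetrically $\theta\theta'=\mathrm{id}$ in $\overline{\calS(\calC)}$. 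Therefore $\overline\theta$ is an isomorphism $\Mimo f\cong(\Mimo f)'$ in $\overline{\calS(\calC)}$, and $\Mimo f$ is well defined there.

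Next I would fix once and for all one representative $\Mimo f$ for each object $f\in\calH(\calC)$, and for a morphism $\alpha:f\to g$ obtain $\Mimo(\alpha):\Mimo f\to\Mimo g$ by factoring the composite $\alpha\pi_f$ (a map out of the object $\Mimo f$ of $\calS(\calC)$) through the approximation $\pi_g$, so $\pi_g\Mimo(\alpha)=\alpha\pi_f$; the identity $\pi_g\Mimo(\alpha)\sigma_f=\alpha\pi_f\sigma_f=0$ then produces the map $h:i_A\to i_C$ on kernels, recovering the commuting diagram in the text. If $\beta$ is another lift, then $\pi_g(\beta-\Mimo(\alpha))=0$ forces $\beta-\Mimo(\alpha)$ through $\sigma_g:i_C\to\Mimo g$, an injective of $\calS(\calC)$, so $\overline\beta=\overline{\Mimo(\alpha)}$ in $\overline{\calS(\calC)}$; thus $\Mimo(\alpha)$ is well defined in $\overline{\calS(\calC)}$. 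Functoriality is then formal: $\mathrm{id}_{\Mimo f}$ is a lift of $\mathrm{id}_f$, and for $f\xrightarrow{\alpha}g\xrightarrow{\beta}h$ the composite $\Mimo(\beta)\Mimo(\alpha)$ satisfies $\pi_h\Mimo(\beta)\Mimo(\alpha)=\beta\pi_g\Mimo(\alpha)=\beta\alpha\,\pi_f$, hence is a lift of $\beta\alpha$; by the uniqueness just established these equal $\mathrm{id}_{\Mimo f}$ and $\Mimo(\beta\alpha)$ in $\overline{\calS(\calC)}$, and additivity follows in exactly the same way. Choosing the representatives in Step 1 differently changes $\Mimo$ only by a natural isomorphism, so $\Mimo:\calH(\calC)\to\overline{\calS(\calC)}$ is a well-defined additive functor.

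The only structural inputs needed beyond diagram chasing are that $\sigma_f$ is a kernel of $\pi_f$ in $\calS(\calC)$ and that $i_A=(0\to I_A)$ is injective in $\calS(\calC)$---the latter because an inflation in $\calS(\calC)$ is in particular a degreewise inflation, so an injective of $\calC$ placed in the lower coordinate extends morphisms along it. Granting these, I do not expect a real obstacle; the only thing to be careful about is the bookkeeping, i.e. checking that every ambiguity lands in the ideal of morphisms factoring through injectives, which is precisely what $\overline{\calS(\calC)}$ quotients out. The most error-prone point will be keeping the directions of $\theta,\theta'$ and of the various lifts straight, rather than any substantive difficulty.
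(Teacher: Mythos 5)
Your proposal is correct, and for the morphism-level statements it coincides with the paper's argument: both derive uniqueness of the lift $\Mimo(\alpha)$ and compatibility with composition from the fact that anything killed by $\pi_g$ factors through the kernel term $i_C=(0\to I_C)$, which is injective in $\calS(\calC)$ and hence vanishes in $\overline{\calS(\calC)}$. Where you genuinely diverge is the object-level uniqueness: the paper proves it by an explicit computation, showing $\bigl(\begin{smallmatrix}A\\B\oplus I_A\end{smallmatrix}\bigr)_{[f,e]^T}\oplus(0\to I'_A)\cong\bigl(\begin{smallmatrix}A\\B\oplus I'_A\end{smallmatrix}\bigr)_{[f,e']^T}\oplus(0\to I_A)$ in $\calS(\calC)$, so the two candidates differ only by injective direct summands; you instead argue softly, comparing any two right $\calS(\calC)$-approximations $\pi_f,\pi'_f$ by mutual factorization and showing $\theta'\theta-\mathrm{id}$ and $\theta\theta'-\mathrm{id}$ factor through the kernels $i_A$, $i'_A$. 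Both are valid; the paper's computation gives slightly more concrete information (the representatives agree up to explicit injective summands, which is what feeds into statements like Proposition \ref{prop:mimo} and Lemma \ref{lem:hi_dense}), whereas your argument is the standard ``approximations with injective kernel are stably isomorphic'' principle and would apply verbatim to any right $\calS(\calC)$-approximation, in the spirit of the functor $\mathfrak R$ mentioned in the introduction. Two small points of bookkeeping: the conflation $0\to i_A\to\Mimo f\to f\to 0$ lives in $(\calH(\calC),\calE^{\rm deg})$ rather than in $\calS(\calC)$ (since $f$ need not be an inflation), though the kernel property you invoke holds there and restricts to maps from $\calS(\calC)$; and your one-line justification that $(0\to I)$ is injective in $\calS(\calC)$ is too quick, since the extension $v:Y_2\to I$ of $w:X_2\to I$ must also satisfy $v y=0$, so one should use $\Hom_{\calS(\calC)}\bigl((Y_1\xrightarrow{y}Y_2),(0\to I)\bigr)\cong\Hom_\calC(\cok y,I)$ together with exactness of $\bcok$ — but the paper takes this standard fact for granted as well, so it is not a genuine gap.
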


\begin{proof} (1) We show that $\Mimo f $ is uniquely determined in $\overline{\calS(\calC)}$. In fact, it is easy to see that for two monomorphisms $e:A\to I_A$ and $e':A\to I'_A$, $${\left(\begin{smallmatrix} A\\ B\oplus I_A\end{smallmatrix}\right)}_{\left[\begin{smallmatrix} f\\ e\end{smallmatrix}\right]}\oplus\left(\begin{smallmatrix} 0\\ I'_A\end{smallmatrix}\right)\cong 
{\left(\begin{smallmatrix} A\\ B\oplus I_A\oplus I'_A\end{smallmatrix}\right)}_{\left[\begin{smallmatrix} f\\ e\\e'\end{smallmatrix}\right]}\cong {\left(\begin{smallmatrix} A\\ B\oplus I'_A\end{smallmatrix}\right)}_{\left[\begin{smallmatrix} f\\ e'\end{smallmatrix}\right]}\oplus\left(\begin{smallmatrix} 0\\ I_A\end{smallmatrix}\right).$$
Hence ${\left(\begin{smallmatrix} A\\ B\oplus I_A\end{smallmatrix}\right)}_{\left[\begin{smallmatrix} f\\ e\end{smallmatrix}\right]}\cong {\left(\begin{smallmatrix} A\\ B\oplus I'_A\end{smallmatrix}\right)}_{\left[\begin{smallmatrix} f\\ e'\end{smallmatrix}\right]}$ in the stable category $\overline {\calS(\calC)}$, i.e. $\Mimo f $ is unique in  $\overline {\calS(\calC)}$.

(2) We show that for fixed $\Mimo f $ and $\Mimo g $, the morphism $\Mimo(\alpha)$ is unique. In fact, if there is another morphism $\Mimo(\alpha)'$ making the above diagram commutative, then $\pi_g(\Mimo(\alpha)-\Mimo(\alpha)')=0$. So  $\Mimo(\alpha)-\Mimo(\alpha)'$ factors through the injective object $i_C$, i.e. $\Mimo(\alpha)=\Mimo(\alpha)'$ in $\overline{\calS(\calC)}$.

(3) We show that $\Mimo$ respects the composition of morphisms. Let $\alpha:\ f\to g$ and $\beta:\ g\to h$ be homomorphisms in $\calH(\calC)$. In the above construction, we have $\Mimo(\beta\alpha)-\Mimo(\beta)\Mimo(\alpha)$ factors through an injective object. Hence $\Mimo(\beta\alpha)=\Mimo(\beta)\Mimo(\alpha)$ in $\overline{\calS(\calC)}$.
\end{proof}

Let $\calS(\calC)_\calI$ be the full subcategory of $\calS(\calC)$ consisting of objects which have no non-zero  $\Ext$-injective direct summands $I=I$ or $0\to I$.
According to the dual of Corollary \ref{cor:qc_epiv}, there is a bijection between indecomposable objects in $\calS(\calC)_\calI$ and $\overline{\calS(\calC)}$. For convenience, we introduce a similar concept.

\begin{defn}
 For each $f\in\calH(\calC)$, we denote by $\mimo f$ the unique object in $\calS(\calC)_\calI$ such that under the costable functor it is isomorphic to $\Mimo f$.
\end{defn} 

Notice that $\mimo$ is not a functor. We summarize some facts about minimal monomorphisms, which will be useful for the computation.
 \begin{prop}\label{prop:mimo} \ 
\begin{enumerate}
\item $\Mimo(f\oplus g)\cong\Mimo f\oplus \Mimo g$.
\item For a morphism $f:A\to B$ and any morphism $h:A\to I$ with $I$ an $\Ext$-injective object, if $g=[f,h]^T:A\to B\oplus I$ is a monomorphism, then $\Mimo f\cong g$ in $\overline{\calS(\calC)}$.
  \item  If $f$ has a kernel and $i:\ker f\to I(\ker f)$ is the $\Ext$-injective envelope, then $\mimo f\cong [f,e]^T:A\to B\oplus I(\ker f)$, where $e$ is an extension of the $\Ext$-injective envelope $i$.
  \item   If $f$ is a monomorphism without non-zero $\Ext$-injective direct summands, then $\mimo f= f$. 
\end{enumerate}
 \end{prop}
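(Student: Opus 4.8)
The plan is to establish the four items in order, deriving (1), (3) and (4) from the single technical statement (2). Throughout I use three facts from the present section: by construction $\Mimo f=[f,e]^T\colon A\to B\oplus I_A$ for an \emph{arbitrary} monomorphism $e\colon A\to I_A$ into an injective; this representative is independent of $e$ in $\overline{\calS(\calC)}$ by Proposition-Definition \ref{mono-functor}; and an object of the shape $(0\to I)$ with $I$ injective in $\calC$ is an injective object of $\calS(\calC)$, hence becomes zero in $\overline{\calS(\calC)}$. For (1), write $f\colon A_1\to A_2$ and $g\colon B_1\to B_2$ and pick monomorphisms $e_1\colon A_1\to I_1$, $e_2\colon B_1\to I_2$ into injectives. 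Then $e_1\oplus e_2\colon A_1\oplus B_1\to I_1\oplus I_2$ is again such a monomorphism, so the associated representative of $\Mimo(f\oplus g)$ is $[\,f\oplus g,\,e_1\oplus e_2\,]^T$, which --- after merely permuting the summands of the target --- is isomorphic in $\calS(\calC)$ to $[f,e_1]^T\oplus[g,e_2]^T=\Mimo f\oplus\Mimo g$. Thus (1) holds already in $\calS(\calC)$, and a fortiori in $\overline{\calS(\calC)}$.

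The technical core is (2). Suppose $h\colon A\to I$ with $I$ injective and that $g:=[f,h]^T\colon A\to B\oplus I$ is a monomorphism; fix any monomorphism $e\colon A\to I_A$ into an injective, so $\Mimo f=[f,e]^T$ in $\overline{\calS(\calC)}$. Both $[f,h]^T$ and $[f,e]^T$ are objects of $\calS(\calC)$, hence inflations in $\calC$. Since $I_A$ is injective, $e$ extends along the inflation $[f,h]^T$ to some $\bar e\colon B\oplus I\to I_A$ with $\bar e\,[f,h]^T=e$; then $\bigl(1_A,\bigl[\begin{smallmatrix}1&0\\-\bar e&1\end{smallmatrix}\bigr]\bigr)$ is an isomorphism in $\calS(\calC)$ carrying $[f,h,e]^T\colon A\to B\oplus I\oplus I_A$ to $[f,h,0]^T=[f,h]^T\oplus(0\to I_A)$. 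Symmetrically, injectivity of $I$ and the inflation $[f,e]^T$ give $[f,e,h]^T\cong[f,e]^T\oplus(0\to I)$, while $[f,h,e]^T\cong[f,e,h]^T$ by permuting summands. Combining, $[f,h]^T\oplus(0\to I_A)\cong[f,e]^T\oplus(0\to I)$ in $\calS(\calC)$; killing the injective summands $(0\to I_A)$ and $(0\to I)$ in $\overline{\calS(\calC)}$ gives $g=[f,h]^T\cong[f,e]^T=\Mimo f$ there.

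Now (3) and (4) follow. For (3), the kernel inclusion $\kappa\colon\ker f\to A$ is an inflation, so injectivity of $I(\ker f)$ lets us extend $i$ to $e\colon A\to I(\ker f)$ with $e\kappa=i$; then $g:=[f,e]^T$ is a monomorphism, because $gx=0$ forces $fx=0$, so $x=\kappa y$ for some $y$, and then $iy=e\kappa y=ex=0$ forces $y=0$ and $x=0$. Applying (2) to this $g$ (with $I=I(\ker f)$) yields $\Mimo f\cong[f,e]^T$ in $\overline{\calS(\calC)}$, i.e.\ $[f,e]^T$ represents $\mimo f$; to upgrade this to the stated equality $\mimo f=[f,e]^T$ one checks $[f,e]^T\in\calS(\calC)_\calI$, and this is exactly where the minimality of the injective envelope enters --- any nonzero injective direct summand of $[f,e]^T$ would produce a split epimorphism from $B\oplus I(\ker f)$ onto an injective $I'\neq 0$ that vanishes on $g$, and reading this through $e\kappa=i$ one extracts an injective subobject $I'\subseteq I(\ker f)$ with $I'\cap\operatorname{Im} i=0$, contradicting the essentiality of $i$. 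For (4), if $f$ is a monomorphism it is already an object of $\calS(\calC)$, so $1_f$ is a right $\calS(\calC)$-approximation and $\Mimo f\cong f$; equivalently, apply (3) with $\ker f=0$, $I(\ker f)=0$ and $e=0$.

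The step I expect to be the main obstacle is precisely the ``no superfluous injective summand'' verification in (3): making it airtight requires either pinning down the indecomposable injective objects of $\calS(\calC)$ (presumably those of the form $(0\to I)$ and $(I\xrightarrow{1}I)$ with $I$ injective in $\calC$) or running the essentiality argument separately for each possible shape of such a summand. By contrast, (1) is formal and (2) is a clean manipulation with triangular automorphisms built from the injectivity property, so those I expect to be routine once one keeps straight which of the maps involved are inflations.
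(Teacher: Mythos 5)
The paper states Proposition \ref{prop:mimo} without proof (it is offered as a list of facts), so there is no argument to compare with line by line; judged on its own, your items (1) and (2) are correct, and (2) is exactly the device the paper itself uses in Proposition-Definition \ref{mono-functor}: a triangular automorphism built from the extension property of injectives, followed by killing the injective objects $(0\to I)$ of $\calS(\calC)$ in the costable category. Those parts are fine.

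The genuine gap is in (3), and it propagates to the $\mimo$-claims in (4). First, to apply (2) you must know that $g=[f,e]^T$ is an \emph{object of} $\calS(\calC)$, i.e.\ an inflation; your element-chase only shows it is a categorical monomorphism, and in the setting of Section \ref{sec:mimo} ($\calC$ an abstract Krull--Schmidt exact category with enough injectives, not a module category) a categorical mono need not be admissible. Likewise ``the kernel inclusion $\kappa$ is an inflation'' is asserted without justification; it is only needed for the existence of $e$, which the statement presupposes, but it is not automatic in an exact category. Second, the step you yourself flag as the main obstacle cannot be closed in the way you sketch, because $[f,e]^T$ can genuinely have an injective direct summand of type $(I'\stackrel{1}\to I')$: for $f=0:A\to B$ with $A$ injective one has $i=e=1_A$ and $[f,e]^T\cong (A\stackrel{1}\to A)\oplus(0\to B)$, which does not lie in $\calS(\calC)_\calI$. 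The essentiality of the injective envelope can only exclude summands of type $(0\to I')$ (and even that argument, phrased via $I'\cap\Ima i$, needs to be redone without elements). So what your method actually establishes is the costable statement $\Mimo f\cong [f,e]^T$ in $\overline{\calS(\calC)}$ --- i.e.\ (2) applied to this $g$ --- whereas the literal identification $\mimo f\cong [f,e]^T$ (and similarly $\mimo f=f$ in (4), where $f=(0\to I)$ already fails) holds only after excluding such $(I'\stackrel{1}\to I')$ summands, e.g.\ under an extra hypothesis on $A$ or by reading the isomorphism in $\overline{\calS(\calC)}$; a complete write-up must either add that hypothesis or argue the admissibility of $[f,e]^T$ and the precise summand analysis separately.
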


We conclude that the functor $\Mimo$ is dense. In fact, one can go a little bit further. Recall that $\calH(\calC_\calI)=\{\left(\begin{smallmatrix}A\\B\end{smallmatrix}\right)_f\in\calH(\calC): A,B$  has no non-zero injective summands$\}$.
\begin{lem}\label{lem:hi_dense}
 The functor $\Mimo$ restricted on $\calH(\calC_\calI)$ is dense.
\end{lem}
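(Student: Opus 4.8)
The plan is to show that every object of $\overline{\calS(\calC)}$ is, up to isomorphism in the costable category, of the form $\Mimo f$ for some $f\in\calH(\calC_\calI)$. We already know from Proposition-Definition \ref{mono-functor} that $\Mimo:\calH(\calC)\to\overline{\calS(\calC)}$ is a functor, and that it is dense: indeed for a monomorphism $g:A\to B$ in $\calC$ (i.e. an object of $\calS(\calC)$) Proposition \ref{prop:mimo}(4) gives $\mimo g=g$, so $\Mimo g\cong g$ in $\overline{\calS(\calC)}$ and every object of $\calS(\calC)$ — hence every object of $\overline{\calS(\calC)}$ — is hit. The point of the present lemma is to upgrade ``dense'' to ``dense already on the smaller subcategory $\calH(\calC_\calI)$.''

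So start with an arbitrary object of $\overline{\calS(\calC)}$; by the previous remark we may represent it by a genuine monomorphism $g:A\to B$ in $\calC$, i.e.\ an object of $\calS(\calC)$. Now I would strip off injective summands degreewise. Since $\calC$ is Krull-Schmidt with enough injectives, write $A\cong A'\oplus I_1$ and $B\cong B'\oplus I_2$ with $A',B'\in\calC_\calI$ and $I_1,I_2$ injective. Writing $g$ as a matrix with respect to these decompositions, the component $A'\to I_2$ is annihilated by the costable functor, so replacing it by $0$ does not change the class of $g$ in $\overline{\calS(\calC)}$ — more precisely, $g\cong g''\oplus(\text{something supported on injectives})$ in $\overline{\calS(\calC)}$, where after discarding the injective top summand $i_{I_1}:0\to I_2$ and the injective summand $I_1\xrightarrow{1}I_1$ (both of which are injective objects of $\calS(\calC)$, hence zero in $\overline{\calS(\calC)}$) one is left with a morphism whose source and target lie in $\calC_\calI$. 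I would organize this using Proposition \ref{prop:mimo}(1),(2): the summand of $g$ landing in the injective part of $B$ can be absorbed by (2) (it only changes $\Mimo$ up to iso in $\overline{\calS(\calC)}$), and then (1) lets me delete the parts living purely on injective objects. The outcome is an object $f\in\calH(\calC_\calI)$ with $\Mimo f\cong g$ in $\overline{\calS(\calC)}$, which is exactly density of $\Mimo|_{\calH(\calC_\calI)}$.

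The main obstacle is bookkeeping rather than conceptual: one must be careful that after deleting injective summands in the source one does not reintroduce them in the target, and that the leftover off-diagonal maps $I_1\to B'$ (from source-injective to target-non-injective) do not cause trouble — but these are harmless because a monomorphism out of an injective object splits, so such a component can be peeled off as a direct summand of $\calS(\calC)$-objects of the form $I_1\xrightarrow{\text{split mono}}I_1\oplus(\cdots)$, again injective in $\calS(\calC)$ and hence zero in $\overline{\calS(\calC)}$. Once this reduction is set up cleanly, invoking Proposition \ref{prop:mimo}(1),(2) and (4) finishes it in a line, so I expect the write-up to be short.
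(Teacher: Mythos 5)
Your plan is correct and essentially the same as the paper's proof: you first split off injective summands of the source as copies of $(I\xrightarrow{1}I)$ (injective in $\calS(\calC)$, hence zero in $\overline{\calS(\calC)}$), then write the target as $B_1\oplus J$ with $B_1\in\calC_\calI$, $J$ injective, and invoke Proposition \ref{prop:mimo}(2) to conclude $\Mimo f'\cong g$ for the component $f'$ into $B_1$. One caution for the write-up: do not phrase the codomain step as ``$g$ decomposes in $\overline{\calS(\calC)}$ with a summand in $\calH(\calC_\calI)$'' --- the leftover $f'$ need not be a monomorphism (e.g.\ $g\colon S\to I(S)$ leaves $f'=(S\to 0)$), so the correct statement is precisely $\Mimo f'\cong g$, which is what Proposition \ref{prop:mimo}(2) gives.
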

\begin{proof} 
Let $\left(\begin{smallmatrix}A\\B\end{smallmatrix}\right)_f\in \overline{\calS(\calC)}$. Up to isomorphism, we can assume that $A$ has no non-zero  $\Ext$-injectives direct summands. Otherwise, assume $f=[f_1,f_2]:A_1\oplus I_1\to B$, where $I_1$ is  $\Ext$-injective. Since $f$ is a monomorphism, it follows that $\left(\begin{smallmatrix}I_1\\I_1\end{smallmatrix}\right)_1$ is a direct summand of $f$.  Hence $f\cong f_1$ in $\overline{\calS(\calC)}$. 

Now assume $f=[f',f'']^T:A\to B_1\oplus J$, where $A$ and $B_1$ have no non-zero $\Ext$-injective  direct summands and $J$ is $\Ext$-injective. Then $\Mimo f' \cong f$ by Proposition \ref{prop:mimo} (2).
\end{proof}

\subsection{The epivalence} 
 The aim of this section is to define minimal monomorphisms of objects in $\calH(\overline\calC)$ via an epivalence $\overline{\calS(\calC)}\to \calH(\overline\calC)$. We start from discussing the relations among various morphism categories. 

 The costable functor $Q^\calC:\calC\to \overline\calC$ induces $\calH(Q^\calC):\calH(\calC)\to \calH(\overline\calC)$ (see Section \ref{sec:fun_morph}).
  It restricts to  a functor $\calS(\calC)\to  \calH(\overline\calC)$ and then induces a functor $\pi: \overline{\calS(\calC)}\to  \calH(\overline\calC)$. It is easy to check that there is a diagram of functors with commutative squares and triangles:
  
 \begin{figure}[h]
     \centering
  \begin{tikzcd}
  \calS(\calC)\ar[rr,"Q^{\calS(\calC)}"]\ar[d,hook] &&\overline{\calS(\calC)}\ar[d,"\pi"]\\
  \calH(\calC)\ar[rr,"\calH(Q^\calC)"'] \ar[urr,"\Mimo"]
  &&{\calH(\overline\calC).}
 \end{tikzcd}    
     \caption{Functors between morphism categories}
     \label{fig:costable}
 \end{figure}

\begin{lem}\label{lem:mimo_eq} {\normalfont [An equivalent condition for $\Mimo f\cong\Mimo g$]}
 For morphisms $\left(\begin{smallmatrix}A\\B\end{smallmatrix}\right)_f$ and $\left(\begin{smallmatrix}C\\D\end{smallmatrix}\right)_g$ in $ \calC_\calI$, the following statements are equivalent:
\begin{enumerate}
 \item $\overline f\cong \overline g$ in $\calH(\overline\calC)$, 
\item there are isomorphisms $a:A\to C$ and $b:B\to D$ in $\calC$, such that $\beta f-g\alpha$ factors through an injective object,
\item $\Mimo f\cong \Mimo g$. 
\end{enumerate} 
\end{lem}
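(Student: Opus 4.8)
The plan is to prove the cycle $(1)\Rightarrow(2)\Rightarrow(3)\Rightarrow(1)$. The tools I will use are: the commutative triangle $\pi\circ\Mimo=\calH(Q^\calC)$ of Figure~\ref{fig:costable}; the fact that $Q^\calC|_{\calC_\calI}$ is an epivalence (the dual of Lemma~\ref{lem:cp_epiv}), hence full and reflecting isomorphisms; the defining property that $\overline h=0$ in $\overline\calC$ if and only if $h$ factors through an injective object; and Proposition~\ref{prop:mimo}(2), which lets me enlarge the injective part of a chosen representative of $\Mimo f$ without changing its class in $\overline{\calS(\calC)}$. Throughout I write $bf-ga$ for the morphism in condition~(2) (i.e.\ I take $\alpha=a$, $\beta=b$).

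The implication $(3)\Rightarrow(1)$ is formal: applying the functor $\pi$ to an isomorphism $\Mimo f\cong\Mimo g$ in $\overline{\calS(\calC)}$ and using $\pi\circ\Mimo=\calH(Q^\calC)$ yields $\overline f=\calH(Q^\calC)(f)\cong\calH(Q^\calC)(g)=\overline g$ in $\calH(\overline\calC)$. For $(1)\Rightarrow(2)$: an isomorphism $\overline f\cong\overline g$ in $\calH(\overline\calC)$ amounts to isomorphisms $\bar a:A\to C$ and $\bar b:B\to D$ in $\overline\calC$ with $\bar b\,\overline f=\overline g\,\bar a$. Since $A,B,C,D\in\calC_\calI$ and $Q^\calC|_{\calC_\calI}$ is full and reflects isomorphisms, I lift $\bar a,\bar b$ to isomorphisms $a:A\to C$, $b:B\to D$ in $\calC$; then $\overline{bf-ga}=\bar b\,\overline f-\overline g\,\bar a=0$, which says exactly that $bf-ga$ factors through an injective object.

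The substantive step is $(2)\Rightarrow(3)$. Write $bf-ga=vu$ with $u:A\to I$ and $v:I\to D$, $I$ injective, and fix an inflation $e:A\to I_A$ into an injective object. Then $[f,e,u]^T:A\to B\oplus I_A\oplus I$ and $[g,ea^{-1},ua^{-1}]^T:C\to D\oplus I_A\oplus I$ are inflations, and by Proposition~\ref{prop:mimo}(2) they represent $\Mimo f$ and $\Mimo g$ in $\overline{\calS(\calC)}$, respectively. I claim that the pair $(a,\Psi)$, with
\[\Psi=\begin{bmatrix} b & 0 & -v\\ 0 & 1 & 0\\ 0 & 0 & 1\end{bmatrix}:B\oplus I_A\oplus I\longrightarrow D\oplus I_A\oplus I,\]
is an isomorphism between these objects in $\calS(\calC)$. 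Indeed, the identity $bf-vu=ga$ (together with $ea^{-1}a=e$ and $ua^{-1}a=u$) gives $\Psi\,[f,e,u]^T=[g,ea^{-1},ua^{-1}]^T\,a$, so $(a,\Psi)$ is a morphism in $\calH(\calC)$; it is invertible because $a$ is an isomorphism and $\Psi$ is upper triangular with invertible diagonal; and as its source and target are monomorphisms it is an isomorphism in $\calS(\calC)$, a fortiori in $\overline{\calS(\calC)}$. Hence $\Mimo f\cong\Mimo g$.

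I expect the only point requiring genuine care to be in $(2)\Rightarrow(3)$: verifying that $[f,e,u]^T$ and $[g,ea^{-1},ua^{-1}]^T$ are legitimate representatives of $\Mimo f$ and $\Mimo g$ — that is, that appending the extra injective components $u$ and $ua^{-1}$ does not alter the class in $\overline{\calS(\calC)}$, which is precisely Proposition~\ref{prop:mimo}(2) — and checking that $\Psi$ really is a morphism of inflations. The other two implications are essentially bookkeeping with the definitions.
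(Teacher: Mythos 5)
Your proof is correct, and the overall skeleton (the cycle $(1)\Rightarrow(2)\Rightarrow(3)\Rightarrow(1)$, with $(3)\Rightarrow(1)$ formal and $(1)\Rightarrow(2)$ obtained by lifting the isomorphism through the epivalence $\calC_\calI\to\overline\calC$) matches the paper. Where you genuinely diverge is in the key implication $(2)\Rightarrow(3)$. The paper keeps the representative $[f,e]^T:A\to B\oplus I$ of $\Mimo f$ fixed, uses injectivity of the object $Q$ through which $ga-bf$ factors to extend the map $A\to Q$ along the inflation $[f,e]^T$, and then invokes a Krull--Schmidt/radical argument: since $B\in\calC_\calI$, the correction term $su$ factors through an injective and hence lies in $\rad\End(B)$, so $b+su$ is invertible; this yields an isomorphism $[f,e]^T\cong[g,ea^{-1}]^T$ with no extra summands. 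You instead stabilize both representatives by the injective $I$ through which $bf-ga$ factors (legitimate by Proposition \ref{prop:mimo}(2), since appending a component into an injective does not change the class in $\overline{\calS(\calC)}$) and then write down an explicit block-triangular isomorphism $(a,\Psi)$ whose invertibility is immediate from that of $a$ and $b$. Your route is more elementary -- it avoids the extension-through-an-inflation step and the radical argument entirely, and in fact does not use the hypothesis $B,D\in\calC_\calI$ in this implication (that hypothesis is only needed for $(1)\Rightarrow(2)$) -- at the harmless cost of enlarging the injective part of the chosen representatives; the paper's argument buys a representative-level isomorphism without added summands, which is closer in spirit to the classical $\Mimo$ computations of Ringel--Schmidmeier. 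The only point you should make explicit is why $[f,e,u]^T$ and $[g,ea^{-1},ua^{-1}]^T$ are inflations: the composite with the projection onto $I_A$ is the inflation $e$ (resp.\ $ea^{-1}$), and since $\calC$ is Krull--Schmidt, hence weakly idempotent complete, a morphism whose composite with another map is an inflation is itself an inflation -- the same fact the paper uses implicitly in Lemma \ref{lem:mimo_contra}.
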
 
\begin{proof}
    $(1)\implies (2)$: Assume $\overline f\cong \overline g$. There are morphisms $a:A\to C$ and $b:B\to D$ such that (i) $\overline a$ and $\overline b$ are isomorphisms  in $\overline\calC$ and (ii) $\overline{bf}=\overline{ga}$. By the dual of Lemma \ref{lem:cp_iso},  (i) implies that $a$, $b$ are isomorphisms. And (ii) implies that $bf-ga$ factors through an $\Ext$-injective object. \\
    $(2)\implies (3)$: Assume $ga-bf:A\stackrel{t}\to Q\stackrel{s}\to D$, where $Q$ is $\Ext$-injective. Given a monomorphism $[f,e]^T:A\to B\oplus I$, it follows that $t$ can factor through $[f,e]^T$, i.e. there is $[u,v]:B\oplus I\to Q$ such that $t=uf+ve$. Since $B$ has no non-zero $\Ext$-injective summands, the composition $b^{-1}su\in\rad\End(B)$. So $b+su=b(1+b^{-1}su)$ is an isomorphism. Hence, the commutative diagram below shows an isomorphism $[f,e]^T\cong [g,ea^{-1}]^T$
$$
\xymatrix{A\ar[r]^{[f,e]^T}\ar[d]_{a}&B\oplus I\ar[d]^{\left[\begin{smallmatrix}b+su&sv\\0&1\end{smallmatrix}\right]}\\ C\ar[r]^{[g,ea^{-1}]^T}&D\oplus I}
$$
Therefore by Proposition \ref{prop:mimo} $\Mimo f\cong [f,e]^T\cong [g,ea^{-1}]^T\cong \Mimo g$.\\
$(3)\implies (1)$ is trivial.
\end{proof}

 \begin{thm}\label{thm:epivalence}
The functor $\pi: \overline{\calS(\calC)}\to \calH(\overline \calC)$ is an epivalence.
 \end{thm}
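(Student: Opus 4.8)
The plan is to verify the three defining properties of an epivalence from Definition~\ref{def:epiv} one at a time: density, fullness, and reflection of isomorphisms. I will exploit the commutative diagram of Figure~\ref{fig:costable}, which expresses $\pi$ as the factorization of $\calH(Q^\calC)\colon\calH(\calC)\to\calH(\overline\calC)$ through $\Mimo$ (on the subcategory $\calS(\calC)$, where $\Mimo$ is just the costable quotient $Q^{\calS(\calC)}$), together with Lemma~\ref{lem:mimo_eq}, which is exactly the translation between ``$\overline f\cong\overline g$ in $\calH(\overline\calC)$'' and ``$\Mimo f\cong\Mimo g$'' that makes $\pi$ injective on iso-classes.

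\emph{Density.} Given an object $\overline h\colon \overline X\to\overline Y$ in $\calH(\overline\calC)$, I first replace $X,Y$ by objects without nonzero injective summands, so that $h$ may be regarded as a morphism in $\calH(\calC_\calI)$ lifting $\overline h$. By Lemma~\ref{lem:hi_dense} the functor $\Mimo$ restricted to $\calH(\calC_\calI)$ is dense, so there is an object $f\in\calS(\calC)$ (namely $\mimo h$) with $\Mimo f$ hitting the right iso-class; chasing the commuting triangle of Figure~\ref{fig:costable} gives $\pi(f)=\calH(Q^\calC)(f)\cong\overline h$ in $\calH(\overline\calC)$. Hence $\pi$ is dense.

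\emph{Reflection of isomorphisms.} Suppose $f,g\in\calS(\calC)$ (equivalently, after the usual reduction, in $\calS(\calC)_\calI\subseteq\calH(\calC_\calI)$) and $\pi$ sends a morphism $\underline{\overline\alpha}\colon f\to g$ in $\overline{\calS(\calC)}$ to an isomorphism $\overline\alpha$ in $\calH(\overline\calC)$. Then in particular $\overline f\cong\overline g$, so Lemma~\ref{lem:mimo_eq}$(1)\Rightarrow(2)$ produces isomorphisms $a\colon A\to C$, $b\colon B\to D$ in $\calC$ with $bf-ga$ factoring through an injective; the components of $\overline\alpha$ themselves are already isomorphisms in $\overline\calC$, and since $A,C,B,D$ have no nonzero injective summands the dual of Lemma~\ref{lem:cp_iso} upgrades each component of $\alpha$ to an honest isomorphism in $\calC$. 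Thus $\alpha$ is an isomorphism in $\calS(\calC)$, so its image in $\overline{\calS(\calC)}$ is an isomorphism, and $\pi$ reflects isomorphisms.

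\emph{Fullness.} This is the step I expect to be the main obstacle. Take $f\colon A\to B$ and $g\colon C\to D$ in $\calS(\calC)$ and a morphism $(\overline{\phi_1},\overline{\phi_2})\colon\overline f\to\overline g$ in $\calH(\overline\calC)$, i.e.\ $\phi_2 f-g\phi_1$ factors through an injective object $Q$ of $\calC$, say $\phi_2 f-g\phi_1=s\circ t$ with $t\colon A\to Q$, $s\colon Q\to D$. I want to correct $(\phi_1,\phi_2)$ to a genuine morphism of monomorphisms in $\calC$ representing the same class in $\calH(\overline\calC)$. Because $f$ is an inflation and $Q$ is injective, $t$ extends along $f$: there is $t'\colon B\to Q$ with $t'f=t$. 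Replacing $\phi_2$ by $\phi_2-s t'$ leaves $\overline{\phi_2}$ unchanged (it was altered by a map through an injective) and now $(\phi_2-st')f=\phi_2 f-stf=\phi_2 f-(\phi_2 f-g\phi_1)=g\phi_1$, so $(\phi_1,\phi_2-st')\colon f\to g$ is an actual morphism in $\calS(\calC)$ whose image under $\pi$ is the original $(\overline{\phi_1},\overline{\phi_2})$. This shows $\pi$ is full. Assembling the three properties, $\pi$ is an epivalence.
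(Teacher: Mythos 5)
Your density and fullness arguments are fine; in fact your fullness step (extending $t\colon A\to Q$ along the inflation $f$ and correcting $\phi_2$ by $st'$) is a clean direct argument, essentially a hands-on version of the paper's appeal to fullness of $\calH(Q^\calC)$ and density of $\Mimo$. The problem is the reflection-of-isomorphisms step, and the gap is concrete: the reduction you invoke, ``equivalently, after the usual reduction, in $\calS(\calC)_\calI\subseteq\calH(\calC_\calI)$,'' is false. An object of $\calS(\calC)$ with no nonzero injective direct summands \emph{as an object of} $\calS(\calC)$ can perfectly well have an injective direct summand of $\calC$ in its codomain: the injective envelope $A\rightarrowtail I(A)$ of a non-injective indecomposable $A$ (more generally $\mimo h$ for any $h$ that is not a monomorphism) is such an object, and since the injectives of $\calS(\calC)$ are of the forms $0\to I$ and $I\xrightarrow{1} I$, no representative of its isomorphism class in $\overline{\calS(\calC)}$ has codomain in $\calC_\calI$. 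So you cannot arrange $A,B,C,D\in\calC_\calI$, and without that the dual of Lemma \ref{lem:cp_iso} cannot be applied to the second component $\alpha_2$, which is exactly where your argument needs it.

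Moreover the strategy itself—upgrading $\overline\alpha$ to an isomorphism in $\calS(\calC)$—cannot be repaired by a cleverer reduction: $f$ and $f\oplus(0\to I)$ are isomorphic in $\overline{\calS(\calC)}$ via a morphism whose second component is $[1,0]^T$, which is not an isomorphism in $\calC$, so a map inducing an isomorphism under $\pi$ need not be (congruent to) an isomorphism of $\calS(\calC)$ with invertible components. The paper sidesteps this precisely by \emph{not} trying to put the monomorphisms themselves in $\calH(\calC_\calI)$: using Lemma \ref{lem:hi_dense} it writes $f\cong\Mimo f'$ and $g\cong\Mimo g'$ with $f',g'\in\calH(\calC_\calI)$ arbitrary morphisms (not monomorphisms), notes that $\pi(f)\cong\pi(g)$ forces $\overline{f'}\cong\overline{g'}$ in $\calH(\overline\calC)$, and then applies Lemma \ref{lem:mimo_eq} to $f',g'$ to conclude $\Mimo f'\cong\Mimo g'$, i.e.\ $f\cong g$. (Note also that the paper's proof establishes reflection of isomorphisms on objects—$\pi(f)\cong\pi(g)$ implies $f\cong g$—which is what Corollary \ref{cor:epivalence} uses; your morphism-level statement is stronger and would need an additional argument even after the object-level one, e.g.\ a radical/idempotent argument on endomorphism rings.) As written, the proof of this step is therefore not correct.
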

 
\begin{proof}
Notice that $\calH(Q^\calC)=\pi\circ \Mimo$. Then $\pi$ is dense following from the fact that $\calH(Q^\calC)$ is dense. Also $\pi$ is full following from the fact that $\calH(Q^\calC)$ is full and $\Mimo$ is dense.

It remains to show that $\pi$ reflects isomorphisms. Indeed, let $f,g\in\overline{\calS(\calC)}$. Assume $\pi(f)\cong \pi(g)$. 
By Lemma \ref{lem:hi_dense}, we can assume $f\cong \Mimo f'$ and $g\cong \Mimo g'$ for some $f',g'\in\calH(\calC_\calI)$.
Thus $\pi (f)\cong \pi (g)$ implies $\overline {f'}\cong \overline {g'}$ in $\calH(\overline \calC)$. Then due to Lemma \ref{lem:mimo_eq}, it follows that $\Mimo f'\cong \Mimo g'$. Therefore, $f\cong g$ in $\overline{\calS(\calC)}$.
\end{proof}

\begin{cor}\label{cor:epivalence}
   The functor $\pi$ induces a bijection between indecomposable objects in $\overline{\calS(\calC)}$ and $\calH(\overline \calC)$.  
\end{cor}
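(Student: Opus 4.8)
The plan is to obtain Corollary~\ref{cor:epivalence} as a formal consequence of Theorem~\ref{thm:epivalence} together with the general fact, recalled just after Definition~\ref{def:epiv}, that an epivalence between Krull--Schmidt categories preserves and reflects indecomposable objects and hence restricts to a bijection on isomorphism classes of indecomposables. Theorem~\ref{thm:epivalence} already supplies the one nontrivial input, namely that $\pi\colon\overline{\calS(\calC)}\to\calH(\overline\calC)$ is an epivalence; and both source and target are Krull--Schmidt (the morphism category $\calH(\overline\calC)$ of the Krull--Schmidt category $\overline\calC$ is Krull--Schmidt, $\calS(\calC)$ is closed under summands in the Krull--Schmidt category $\calH(\calC)$, and passing to the costable category preserves the Krull--Schmidt property since idempotents split there). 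So essentially nothing is left to prove; for completeness I would simply spell out the general fact.

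To that end, let $F\colon\calA\to\calB$ be an epivalence of Krull--Schmidt additive categories. First I would note that $F$ reflects zero objects: if $FX=0$ then $F$ applied to the morphism $0\to X$ is an isomorphism, so $0\to X$ is an isomorphism and $X=0$. Next, if $X$ is indecomposable (so $\End_\calA(X)$ is local and $FX\neq0$ by the previous remark), then fullness of $F$ makes the induced ring map $\End_\calA(X)\to\End_\calB(FX)$ surjective; a nonzero quotient of a local ring is local, so $\End_\calB(FX)$ is local and $FX$ is indecomposable. Conversely, if $FX$ is indecomposable and $X=X_1\oplus X_2$, then $FX=FX_1\oplus FX_2$ and some $FX_i=0$, forcing the corresponding $X_i=0$; hence $X$ is indecomposable. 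Thus $F$ carries indecomposables to indecomposables and reflects indecomposables.

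For the bijection itself: density of $F$ shows every indecomposable object of $\calB$ is isomorphic to $FX$ for some $X\in\calA$, and after splitting off the summands of $X$ mapped to $0$ we may take $X$ indecomposable, which gives surjectivity on isomorphism classes of indecomposables. For injectivity, let $X,Y\in\calA$ be indecomposable and $\theta\colon FX\to FY$ an isomorphism; by fullness write $\theta=Fg$ and $\theta^{-1}=Fh$, so that $F(hg)=1_{FX}$ and $F(gh)=1_{FY}$ are isomorphisms. Since $F$ reflects isomorphisms, $hg$ and $gh$ are isomorphisms, hence $g\colon X\to Y$ is an isomorphism and $X\cong Y$. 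Applying all of this to $F=\pi$ yields the corollary.

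The main point, such as it is, is just the bookkeeping around the Krull--Schmidt hypotheses (needed so that ``indecomposable'' is well behaved and idempotents split in $\overline{\calS(\calC)}$); there is no genuine obstacle here, as all the substantive content was already absorbed into the proof of Theorem~\ref{thm:epivalence}.
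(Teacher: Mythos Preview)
Your proposal is correct and follows exactly the route the paper intends: the corollary is stated without proof in the paper, as an immediate consequence of Theorem~\ref{thm:epivalence} together with the general fact recorded just after Definition~\ref{def:epiv} that an epivalence yields a bijection on indecomposables. You have simply spelled out that general fact carefully, including the Krull--Schmidt bookkeeping, which the paper leaves implicit.
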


\begin{cor}\label{cor:epivalence2}
    The composition $\pi\circ Q^{\calS(\calC)}:\calS(\calC)\to \calH(\overline\calC)$ is full and dense.
\end{cor}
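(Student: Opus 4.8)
The plan is to read this off from Theorem \ref{thm:epivalence} as a purely formal consequence, using only that a composite of full functors is full and a composite of dense functors is dense.

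First I would record the two ingredients. The costable functor $Q^{\calS(\calC)}:\calS(\calC)\to\overline{\calS(\calC)}$ is the identity on objects, hence dense, and surjective on each hom-group by the very definition of the stable category (every morphism of $\overline{\calS(\calC)}$ is represented by a morphism of $\calS(\calC)$), hence full. Secondly, by Theorem \ref{thm:epivalence} the functor $\pi:\overline{\calS(\calC)}\to\calH(\overline\calC)$ is an epivalence, and an epivalence is in particular full and dense by Definition \ref{def:epiv}. A one-line diagram chase (resp.\ the obvious computation on objects) shows that fullness (resp.\ density) is stable under composition; applying this to $\pi\circ Q^{\calS(\calC)}$ gives the claim.

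Equivalently, one may invoke the commutative square of Figure \ref{fig:costable}, which identifies $\pi\circ Q^{\calS(\calC)}$ with the restriction of $\calH(Q^\calC)$ to the full subcategory $\calS(\calC)\subseteq\calH(\calC)$; fullness then descends immediately from fullness of $\calH(Q^\calC)$ (itself a consequence of $Q^\calC$ being full and faithful, as in the proof of Proposition \ref{prop:HF_equiv}), while density on $\calS(\calC)$ follows from the identity $\calH(Q^\calC)=\pi\circ\Mimo$ together with the density of $\Mimo$ on $\calH(\calC_\calI)$ proved in Lemma \ref{lem:hi_dense}. There is no real obstacle; the only point worth flagging is that $Q^{\calS(\calC)}$ does \emph{not} reflect isomorphisms (for instance it annihilates nonzero injective objects of $\calS(\calC)$), so the composite $\pi\circ Q^{\calS(\calC)}$ is full and dense but not, in general, an epivalence — which is exactly what the corollary asserts and no more.
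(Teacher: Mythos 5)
Your argument is correct and is exactly the (implicit) reasoning behind the corollary in the paper: $Q^{\calS(\calC)}$ is full and dense as a (co)stable quotient functor, $\pi$ is full and dense by Theorem \ref{thm:epivalence}, and both properties pass to the composite. The alternative route via Figure \ref{fig:costable} and Lemma \ref{lem:hi_dense} is fine too, but unnecessary; the first paragraph already suffices.
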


\begin{rem}\label{rem:mimo_well_def}
 Now we can define the minimal monomorphisms of objects in $\calH(\overline{\calC})$, which will be crucial for justifying the well-definedness of the Auslander-Reiten translation formula in next section. According to Theorem \ref{thm:epivalence}, for an object $\overline f\in\calH(\overline\calC)$, up to isomorphism there exists a unique object $u\in\overline{\calS(\calC)}$, such that $\pi(u)\cong \overline f$. In fact, taking $f$ a representative of $\overline f$ in $\calH(\overline\calC)$, then  $u\cong\Mimo f$. We define $\Mimo \overline f:=\Mimo f$ and $\mimo \overline f=\mimo f$. We also warn the reader that neither $\Mimo$ nor $\mimo$ is a functor on $\calH(\overline\calC)$.
\end{rem}

\begin{lem}\label{lem:mimo_stable}
   For any object $f\in\overline{\calS(\calC)}$, $f\cong\Mimo \overline f$. For any object $f\in \calS(\calC)_\calI$, $f\cong\mimo\overline f$.
\end{lem}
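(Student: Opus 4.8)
The statement to prove (Lemma~\ref{lem:mimo_stable}) asserts two essentially equivalent facts: for $f\in\overline{\calS(\calC)}$ one has $f\cong\Mimo\overline f$, and for $f\in\calS(\calC)_\calI$ one has $f\cong\mimo\overline f$. The plan is to reduce everything to the definitions established in Remark~\ref{rem:mimo_well_def} together with the basic properties of $\Mimo$ collected in Proposition~\ref{prop:mimo}.

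\medskip

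For the first assertion, I would argue as follows. Take $f\in\overline{\calS(\calC)}$, so $f$ is a monomorphism in $\calC$ viewed as an object of the costable category. By Remark~\ref{rem:mimo_well_def}, $\Mimo\overline f$ is defined by choosing any representative (again call it $f$) of $\overline f$ in $\calH(\overline\calC)$, lifting it to $\calH(\calC)$, and setting $\Mimo\overline f:=\Mimo f$, which is the unique object $u\in\overline{\calS(\calC)}$ with $\pi(u)\cong\overline f$. So it suffices to check that $\pi(f)\cong\overline f$ in $\calH(\overline\calC)$ when $f$ itself is a monomorphism in $\calS(\calC)$; but $\pi$ is precisely the functor induced by $\calH(Q^\calC)$ on $\overline{\calS(\calC)}$, which sends a monomorphism $f$ to its image $\overline f$ in $\calH(\overline\calC)$. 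Hence $\pi(f)\cong\overline f$, and since $\pi$ reflects isomorphisms (Theorem~\ref{thm:epivalence}), the unique preimage $u$ of $\overline f$ must be isomorphic to $f$ in $\overline{\calS(\calC)}$; that is, $f\cong\Mimo\overline f$. Alternatively, one can invoke Proposition~\ref{prop:mimo}(4): since $f$ is a monomorphism, $\mimo f=f$ and thus $\Mimo f\cong f$ in $\overline{\calS(\calC)}$, and then note $\Mimo\overline f=\Mimo f$ by definition.

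\medskip

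For the second assertion, let $f\in\calS(\calC)_\calI$, i.e.\ $f$ is a monomorphism in $\calC$ with no non-zero injective summands in either component. By definition, $\mimo\overline f=\mimo f$ (Remark~\ref{rem:mimo_well_def}), and $\mimo f$ is the unique object in $\calS(\calC)_\calI$ whose image under the costable functor $Q^{\calS(\calC)}$ is isomorphic to $\Mimo f$. By Proposition~\ref{prop:mimo}(4), $\mimo f=f$ since $f$ is already a monomorphism, so $\mimo\overline f=f$ as required. The only subtlety worth spelling out is that $\mimo$ is well-defined on $\calS(\calC)_\calI$ precisely because of the dual of Corollary~\ref{cor:qc_epiv}, which gives a bijection between indecomposables of $\calS(\calC)_\calI$ and those of $\overline{\calS(\calC)}$; combined with the Krull-Schmidt property this yields the bijection on all objects needed to make $\mimo$ a well-defined assignment.

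\medskip

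I do not anticipate a genuine obstacle here: the lemma is essentially a bookkeeping statement tying together the definition of $\Mimo\overline f$ in Remark~\ref{rem:mimo_well_def}, the fact that $\pi$ reflects isomorphisms (Theorem~\ref{thm:epivalence}), and Proposition~\ref{prop:mimo}(4). The one point that requires a moment's care is making sure the representative-independence is correctly invoked: $\Mimo f$ depends (in $\calS(\calC)$) on the choice of injective $I_A$ and the map $e$, but is unique in $\overline{\calS(\calC)}$ by Proposition-Definition~\ref{mono-functor}(1), and $\Mimo\overline f$ is defined through that unique class, so no inconsistency arises. I would write the proof in two short paragraphs mirroring the two clauses, each a one-line appeal to the relevant earlier result.
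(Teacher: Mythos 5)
Your proof is correct and is essentially the argument the paper intends: the paper states this lemma without proof, treating it as immediate from Remark~\ref{rem:mimo_well_def} (uniqueness of the preimage under the epivalence $\pi$, since $\pi(f)\cong\overline f$ for a monomorphism $f$) together with Proposition~\ref{prop:mimo}(4) and the bijection from the dual of Corollary~\ref{cor:qc_epiv}, which is exactly what you spell out.
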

 
\subsection{Minimal epimorphisms}
At the end of this section, we introduce the dual notion of minimal monomorphisms. Let $\calD$ be a Krull-Schmidt exact category with enough $\Ext$-projectives. Then the minimal epimorphism functor $\Mepi:\calH(\calD)\to \underline{\calF(\calD)}$ is defined on objects as $$\Mepi f=[f,p]: A\oplus P_B\to B,$$ where $p:P_B\to B$ is an epimorphism with $P_B$  $\Ext$-projective. 
Similarly, the functor $\pi':\underline{\calF(\calD)}\to \calH(\underline\calD)$ sending $f$ to $\underline f$ is an epivalence. Hence we can define $\Mepi \underline f =\Mepi f$ and $\mepi \underline f=\mepi f$, where $f$ is a representative of $\underline f$ in $\calH(\underline{\calC})$. While we do not state all the dual statements about the properties of minimal epimorphisms, we will feel free to use them when necessary.

 \section {The Auslander-Reiten translation formula}
In this section, we are going to prove the Auslander-Reiten translation formula for monomorphism and epimorphism categories. We begin with a detailed investigation of cw-exact sequences in the morphism categories.

\subsection{CW-exact sequences}
Let $\calA$ be an additive category. For morphisms $a:A_1\to A_2$, $b: B_1\to B_2$ and $c:B_1\to A_2$ in $\calA$, denote by $\delta^{\rm cw}_{a,c,b}$ the {\rm cw}-exact sequence in $\calH(\calA)^{\rm cw}$:
$$
\xymatrix{0\ar[r]&A_1\ar[r]^{[1,0]^T}\ar[d]^a& A_1\oplus B_1\ar[r]^{[0,1]}\ar[d]^{\left[\begin{smallmatrix}a&c\\0&b\end{smallmatrix}\right]} &B_1\ar[r]\ar[d]^b&0\\
0\ar[r]&A_2\ar[r]^{[1,0]^T}& A_2\oplus B_2\ar[r]^{[0,1]} &B_2\ar[r]&0.}
$$

\begin{lem}\label{lem:cw_eq}
Two exact sequences $\delta^{\rm cw}_{a,c,b}$ and $\delta^{\rm cw}_{a,c',b}$ are isomorphic if and only if $c-\alpha c'\beta\in\Htp(a,b)$ for some automorphisms $\alpha\in\Aut(A_2)$, $\beta\in\Aut(B_1)$.
\end{lem}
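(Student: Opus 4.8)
The plan is to unwind the definition of an isomorphism of conflations in $\calH^{\rm cw}(\calA)$ between $\delta^{\rm cw}_{a,c,b}$ and $\delta^{\rm cw}_{a,c',b}$, and then identify exactly which block-matrix data such an isomorphism consists of. An isomorphism of conflations is a triple of isomorphisms $(u, v, w)$ in $\calH^{\rm cw}(\calA)$ compatible with the structure maps; since the outer terms are $\left(\begin{smallmatrix}A_1\\A_2\end{smallmatrix}\right)_a$ and $\left(\begin{smallmatrix}B_1\\B_2\end{smallmatrix}\right)_b$ in both sequences, and the maps $[1,0]^T$ and $[0,1]$ are fixed, the real content sits in the middle isomorphism $v:\left(\begin{smallmatrix}A_1\oplus B_1\\A_2\oplus B_2\end{smallmatrix}\right)_{\left[\begin{smallmatrix}a&c\\0&b\end{smallmatrix}\right]}\to\left(\begin{smallmatrix}A_1\oplus B_1\\A_2\oplus B_2\end{smallmatrix}\right)_{\left[\begin{smallmatrix}a&c'\\0&b\end{smallmatrix}\right]}$, i.e. a pair $(v_1,v_2)$ of automorphisms of $A_1\oplus B_1$ and $A_2\oplus B_2$. (One does \emph{not} need to assume from the outset that $u$ and $w$ are identities — morphisms of short exact sequences that fix the identity on both ends in the kernel/cokernel sense force $u,w$ to be determined; but it is cleanest to first reduce to the case where the outer maps are automorphisms $\alpha\in\Aut(A_1)$ on the left and $\beta\in\Aut(B_2)$ on the right that get absorbed into the $\alpha,\beta$ of the statement — I will address this bookkeeping in the write-up.)

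The key step is the commutativity condition $v_2\left[\begin{smallmatrix}a&c\\0&b\end{smallmatrix}\right]=\left[\begin{smallmatrix}a&c'\\0&b\end{smallmatrix}\right]v_1$ together with the compatibility of $(v_1,v_2)$ with the kernel inclusion $[1,0]^T$ and cokernel projection $[0,1]$. Compatibility with $[1,0]^T$ forces $v_1$ and $v_2$ to be \emph{lower}-triangular as block matrices, say $v_1=\left[\begin{smallmatrix}1&0\\ s&\beta\end{smallmatrix}\right]$ (after absorbing the automorphism on the $A_1$-corner) and $v_2=\left[\begin{smallmatrix}\alpha & *\\ * & *\end{smallmatrix}\right]$; compatibility with $[0,1]$ pins down the $B$-corner. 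Plugging these into the commutativity relation and reading off the top-right block of the resulting matrix identity yields precisely $c' \beta + (\text{a term of the form } \alpha^{-1}(a s + t b)) = c$ for suitable $s:B_1\to A_1$ and $t:B_2\to A_2$ coming from the off-diagonal entries of $v_1,v_2$; rearranging gives $c-\alpha c'\beta\in\Htp(a,b)=\{as+tb\}$ (up to renaming $\alpha\mapsto\alpha^{-1}$, which is harmless since automorphisms are closed under inversion). Conversely, given $\alpha,\beta$ and the relation $c-\alpha c'\beta=as+tb$, one writes down the explicit block matrices $v_1,v_2$ (and outer maps $\alpha,\beta$) and checks directly that they assemble into an isomorphism $\delta^{\rm cw}_{a,c,b}\cong\delta^{\rm cw}_{a,c',b}$; this direction is a routine verification that the diagram commutes and that the constructed morphism is invertible (its components are automorphisms or lower-triangular with invertible diagonal).

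The main obstacle — really the only subtle point — is the bookkeeping of which automorphisms are ``free'' versus ``forced'': in $\delta^{\rm cw}_{a,c,b}$ the outer objects are literally the same as in $\delta^{\rm cw}_{a,c',b}$, so one must decide whether an isomorphism of conflations is allowed to act nontrivially on $\left(\begin{smallmatrix}A_1\\A_2\end{smallmatrix}\right)_a$ and $\left(\begin{smallmatrix}B_1\\B_2\end{smallmatrix}\right)_b$. It is, and those actions are exactly an automorphism of $a$ in $\calH(\calA)$ and an automorphism of $b$ in $\calH(\calA)$; however what actually enters the final congruence $c-\alpha c'\beta\in\Htp(a,b)$ is only the first components $\alpha\in\Aut(A_2)$ — wait, more precisely one must track that the relevant conjugating automorphisms are $\alpha$ acting on $A_2$ and $\beta$ acting on $B_1$, and that the second components (on $A_1$ and $B_2$) together with the off-diagonal blocks get absorbed into the $\Htp(a,b)$ term. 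Making this absorption precise — i.e. showing that the full data of a conflation isomorphism collapses to exactly the pair $(\alpha,\beta)\in\Aut(A_2)\times\Aut(B_1)$ modulo the homotopy subgroup — is where I would spend the care; once the block-matrix dictionary is set up correctly, both implications fall out by direct computation.
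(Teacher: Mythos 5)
Your overall route is the same as the paper's: unwind an isomorphism of conflations into its three components, use compatibility with $[1,0]^T$ and $[0,1]$ to force a triangular block shape on the middle isomorphism, and read the congruence off the off-diagonal block. However, two details as written are wrong or unjustified, and they are exactly where the content sits. First, compatibility with $[1,0]^T$ and $[0,1]$ forces the middle components to be \emph{upper}-triangular, $v_1=\left[\begin{smallmatrix}\alpha_1&s\\0&\beta_1\end{smallmatrix}\right]$ and $v_2=\left[\begin{smallmatrix}\alpha_2&t\\0&\beta_2\end{smallmatrix}\right]$ with $s\colon B_1\to A_1$ and $t\colon B_2\to A_2$ (your later typing of $s,t$ agrees with this and contradicts the displayed lower-triangular matrices), and you cannot ``absorb'' the $A_1$-corner to $1$: the outer maps are automorphisms of $a$ and of $b$ in $\calH(\calA)$, hence pairs $(\alpha_1,\alpha_2)$ and $(\beta_1,\beta_2)$ with $a\alpha_1=\alpha_2 a$ and $b\beta_1=\beta_2 b$, and normalizing $\alpha_1$ away would require an automorphism of $\delta^{\rm cw}_{a,c',b}$ that need not exist. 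Second, the ``absorption'' you defer is precisely the step that makes the statement come out: the $(1,2)$-entry of the commutation relation gives $\alpha_2 c-c'\beta_1=as+tb$, and then $c-\alpha_2^{-1}c'\beta_1=\alpha_2^{-1}as+\alpha_2^{-1}tb=a\alpha_1^{-1}s+\alpha_2^{-1}tb\in\Htp(a,b)$, where the identity $\alpha_2^{-1}a=a\alpha_1^{-1}$ is exactly the compatibility above; so $\alpha=\alpha_2^{-1}\in\Aut(A_2)$ and $\beta=\beta_1\in\Aut(B_1)$. Without this identity your term $\alpha^{-1}(as+tb)$ is not visibly in $\Htp(a,b)$, so this one-line computation must be carried out, not postponed.

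For the converse, note also that the outer maps of the proposed isomorphism cannot literally be ``$\alpha$ and $\beta$'': $\alpha$ lives on $A_2$ and $\beta$ on $B_1$, which sit in different degrees of different end terms, so they must first be completed to morphisms $(\alpha_1,\alpha_2)\colon a\to a$ and $(\beta_1,\beta_2)\colon b\to b$ in $\calH(\calA)$ before the block matrices can be assembled. The paper's proof produces $\alpha,\beta$ already in this form, and in the only place the lemma is used, Corollary \ref{cor:cw_split}, one takes $\alpha=\beta=1$, where this is automatic. With these corrections your computation coincides with the paper's argument.
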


\begin{proof}

Exact sequences $\delta^{\rm cw}_{a,c,b}$ and $\delta^{\rm cw}_{a,c',b}$ are isomorphic if and only if
 there are isomorphisms:
$$
\left(\begin{smallmatrix}\alpha_1\\\alpha_2\end{smallmatrix}\right):\left(\begin{smallmatrix}A_1\\A_2\end{smallmatrix}\right)_a\to \left(\begin{smallmatrix}A_1\\A_2\end{smallmatrix}\right)_a,
\left(\begin{smallmatrix}\phi_1\\\phi_2\end{smallmatrix}\right):\left(\begin{smallmatrix}A_1\oplus B_1\\A_2\oplus B_2\end{smallmatrix}\right)_{\left[\begin{smallmatrix}a&c\\0&b\end{smallmatrix}\right]}\to \left(\begin{smallmatrix}A_1\oplus B_1\\A_2\oplus B_2\end{smallmatrix}\right)_{\left[\begin{smallmatrix}a&c'\\0&b\end{smallmatrix}\right]},
\left(\begin{smallmatrix}\beta_1\\\beta_2\end{smallmatrix}\right):\left(\begin{smallmatrix}B_1\\B_2\end{smallmatrix}\right)_b\to \left(\begin{smallmatrix}B_1\\B_2\end{smallmatrix}\right)_b,
$$
satisfying $\left(\begin{smallmatrix}[1,0]^T\\ [1,0]^T\end{smallmatrix}\right)\left(\begin{smallmatrix}\alpha_1\\\alpha_2\end{smallmatrix}\right)=\left(\begin{smallmatrix}\phi_1\\\phi_2\end{smallmatrix}\right)\left(\begin{smallmatrix}[1,0]^T\\ [1,0]^T\end{smallmatrix}\right)$ and $\left(\begin{smallmatrix}\beta_1\\\beta_2\end{smallmatrix}\right)\left(\begin{smallmatrix}[0,1]\\ [0,1]\end{smallmatrix}\right)=\left(\begin{smallmatrix}[0,1]\\ [0,1]\end{smallmatrix}\right)\left(\begin{smallmatrix}\phi_1\\\phi_2\end{smallmatrix}\right)$.  It follows that
$$\phi_1=\left(\begin{smallmatrix} \alpha_1& s\\ 0&\beta_1\end{smallmatrix}\right), \phi_2=\left(\begin{smallmatrix} \alpha_2& -t\\0&\beta_2\end{smallmatrix}\right)$$ for some morphisms $s:B_1\to A_1$ and $t:B_2\to A_2$.

The fact that $\left(\begin{smallmatrix}\alpha_1\\\alpha_2\end{smallmatrix}\right), \left(\begin{smallmatrix}\phi_1\\\phi_2\end{smallmatrix}\right), \left(\begin{smallmatrix}\beta_1\\\beta_2\end{smallmatrix}\right)$ are morphisms implies that $a\alpha_1=\alpha_2a$, $b\beta_1=\beta_2b$ and
$$
\left(\begin{smallmatrix} \alpha_2& -t\\ 0&\beta_2\end{smallmatrix}\right)\left(\begin{smallmatrix} a& c\\ 0&b\end{smallmatrix}\right)=
\left(\begin{smallmatrix} a& c'\\ 0&b\end{smallmatrix}\right)\left(\begin{smallmatrix} \alpha_1& s\\ 0&\beta_1\end{smallmatrix}\right).
$$
That is $\alpha_2 c-c'\beta_1=as+tb$. So $c-(\alpha_2)^{-1}c'\beta_1=\alpha_2^{-1}as+\alpha_2^{-1}tb=a\alpha_1^{-1}s+\alpha_2^{-1}tb\in\Htp(a,b)$.
\end{proof}
 
Given two morphisms $a:A_1\to A_2$ and $b:B_1\to B_2$ in $\calA$, denote by $\Htp(a,b):=\Ima\Hom(B_1,a)+\Ima\Hom(b,A_2)$ a subgroup of $\Hom(B_1,A_2)$. Explicitly, $\Htp(a,b)=\{as+tb:$ for some $s:B_1\to A_1$ and $t:B_2\to A_2\}$.
We point out some special cases of Lemma \ref{lem:cw_eq}, which will be useful later.

\begin{cor}\label{cor:cw_split}\ \\
(1) If $c-c'\in\Htp(a,b)$, then $\delta^{\rm cw}_{a,c,b}$ is isomorphic to $\delta^{\rm cw}_{a,c',b}$.\\
(2) The exact sequence $\delta^{\rm cw}_{a,c,b}$ is split if and only if $c\in\Htp(a,b)$.
\end{cor}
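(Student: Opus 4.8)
\textbf{Proof proposal for Corollary~\ref{cor:cw_split}.}

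The plan is to deduce both statements directly from Lemma~\ref{lem:cw_eq} by making the appropriate choices of the automorphisms $\alpha$ and $\beta$. For part~(1), the strategy is simply to specialize Lemma~\ref{lem:cw_eq} to $\alpha=\mathrm{id}_{A_2}$ and $\beta=\mathrm{id}_{B_1}$: under these choices the condition ``$c-\alpha c'\beta\in\Htp(a,b)$'' becomes precisely ``$c-c'\in\Htp(a,b)$'', so the hypothesis of~(1) is exactly the specialized hypothesis of the lemma, and the conclusion $\delta^{\rm cw}_{a,c,b}\cong\delta^{\rm cw}_{a,c',b}$ follows immediately. There is essentially nothing to compute here — it is a one-line invocation.

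For part~(2), the plan is to recognize that a split exact sequence $\delta^{\rm cw}_{a,c,b}$ is, up to isomorphism, the ``trivial'' one with no cross term, i.e. $\delta^{\rm cw}_{a,0,b}$, which is the direct sum of $(A_1\st{a}\to A_2)$ and $(B_1\st{b}\to B_2)$ and hence split. Conversely, the defining conflation of $\delta^{\rm cw}_{a,c,b}$ in $\calH^{\rm cw}(\calA)$ splits precisely when the monomorphism $[1,0]^T$ is a section in $\calH^{\rm cw}(\calA)$, which by the usual equivalent formulations of a split conflation happens exactly when $\delta^{\rm cw}_{a,c,b}\cong\delta^{\rm cw}_{a,0,b}$. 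So I would argue: $\delta^{\rm cw}_{a,c,b}$ is split $\iff$ $\delta^{\rm cw}_{a,c,b}\cong\delta^{\rm cw}_{a,0,b}$ $\iff$ (by part~(1), or more precisely by Lemma~\ref{lem:cw_eq} with $c'=0$) $c-0=c\in\Htp(a,b)$. The only point needing a sentence of care is the first ``$\iff$'': that a split conflation with fixed end terms $A_1\st a\to A_2$ and $B_1\st b\to B_2$ and middle term of the displayed triangular shape must be isomorphic, as a conflation, to the canonical split one $\delta^{\rm cw}_{a,0,b}$. This follows because any splitting of $[1,0]^T$ in $\calH^{\rm cw}(\calA)$ produces a direct-sum decomposition of the middle object compatible with the sub- and quotient-objects, and reading off the induced isomorphism of conflations lands one in the form $\delta^{\rm cw}_{a,0,b}$.

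I do not anticipate a genuine obstacle: the corollary is a straightforward corollary of Lemma~\ref{lem:cw_eq}. If anything, the mild subtlety is purely bookkeeping in part~(2) — making explicit that ``split conflation'' for $\delta^{\rm cw}_{a,c,b}$ is equivalent to ``$\cong\delta^{\rm cw}_{a,0,b}$ as conflations'' rather than merely ``the underlying three objects sit in some split conflation'' — and then the $\Htp$-condition drops out of Lemma~\ref{lem:cw_eq} with $\alpha,\beta$ the identities and $c'=0$. Thus the write-up will be short: one line for~(1), and for~(2) one line identifying split with $\cong\delta^{\rm cw}_{a,0,b}$ followed by one application of part~(1).
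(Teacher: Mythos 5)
Your proposal is correct and follows exactly the route the paper intends: the paper states this corollary without proof as "special cases" of Lemma \ref{lem:cw_eq}, and your specializations ($\alpha=\mathrm{id}$, $\beta=\mathrm{id}$ for (1); $c'=0$ together with the standard identification of a split conflation with $\delta^{\rm cw}_{a,0,b}$ for (2)) are precisely the intended argument. No gaps.
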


\begin{lem}\label{lem:stable almost split}
If there is an almost split sequence in $\calF(\calC)$ which is not a sporadic almost split sequence.
 \begin{align}\label{eq:epi}
  \tag{$*$}
\xymatrix{0\ar[r] &A_1\ar[r]^{\alpha_1}\ar[d]^{f} &B_1\ar[r]^{\beta_1}\ar[d]^{g} &C_1\ar[r]\ar[d]^{h} &0\\
0\ar[r] &A_2\ar[r]^{\alpha_2}&B_2\ar[r]^{\beta_2} &C_2\ar[r]&0,\\
 }
 \end{align}
 then there is an almost split sequence in $\calH(\underline\calC)^{\rm cw}$:
 \begin{align}\label{eq:stable}
   \tag{$**$}
\xymatrix{0\ar[r] &A_1\ar[r]^{\underline{\alpha_1}}\ar[d]^{\underline f} &B_1\ar[r]^{\underline{\beta_1}}\ar[d]^{\underline g} &C_1\ar[r]\ar[d]^{\underline h} &0\\
0\ar[r] &A_2\ar[r]^{\underline{\alpha_2}} &B_2\ar[r]^{\underline{\beta_2}} &C_2\ar[r] &0.
 }
 \end{align}
Moreover, the end term $\underline h$ in (\ref{eq:stable}) is not of type (a) $0\to M$, (b) $ M= M$ or (c) $ M\to 0$.
\end{lem}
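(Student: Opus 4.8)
The plan is to transfer the almost split sequence ($*$) in $\calF(\calC)$ through the stable functor $Q_{\calC}$ degreewise and check that the image retains the almost split property in $\calH^{\rm cw}(\underline\calC)$. First I would observe that the two rows of ($*$) are conflations in $\calC$, so after applying $Q_\calC$ they become composable pairs in $\underline\calC$; moreover, since each row is a conflation, the two horizontal composites $A_i \to B_i \to C_i$ become split in $\underline\calC$ (indeed, cokernels of inflations with projective middle terms split stably — more precisely, $\underline{\beta_i}$ admits a stable section because $\alpha_i$ is an inflation in $\calC$, so the row splits after passing to $\underline\calC$). This is exactly what is needed for ($**$) to be a cw-exact sequence in the sense of Definition \ref{defn:cw_exact}, so ($**$) genuinely lives in $\calH^{\rm cw}(\underline\calC)$.

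Next I would verify that ($**$) is \emph{not split} in $\calH^{\rm cw}(\underline\calC)$. If it were split, then the middle term $\underline g$ would decompose as $\underline{\alpha_1}$ composed with a stable retraction, and pulling back along the representation equivalence $\pi':\underline{\calF(\calC)}\to \calH(\underline\calC)$ (Corollary \ref{cor:epivalence2} applied dually, or the epivalence of Theorem \ref{thm:epivalence} in its dual form) would force the original sequence ($*$) to split in $\calF(\calC)$ modulo projectives — contradicting that ($*$) is almost split and not one of the sporadic sequences whose end term is of type (a) or (b). The key point to nail down here is that the hypothesis "not sporadic" guarantees that the end term $h$ of ($*$), hence $\underline h$, is not of type (a) $0\to M$, (b) $M = M$ or (c) $M\to 0$: sporadic cases (1) and (2) of Lemma \ref{lem:sporadic} are precisely the ones with end term of type (b) $C\st{1}\to C$, and the remaining degenerate possibility (an almost split sequence ending at $C\to 0$, case (3)) has end term with $C_2 = 0$; one checks directly these are the only ways $\underline h$ could degenerate, and excluding the sporadic cases rules them out. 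This gives the "Moreover" clause for free and is needed to invoke Theorem \ref{thm:EH} later.

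Then I would establish the right-almost-split property of $\underline\beta := (\underline{\beta_1},\underline{\beta_2})$ in $\calH^{\rm cw}(\underline\calC)$. Given a non-retraction $\underline v = (\underline v_1, \underline v_2):\underline{g'}\to \underline h$ in $\calH^{\rm cw}(\underline\calC)$, I would lift it (using that $\calF(\calC)\to \calH(\underline\calC)$ is full, again via the dual of Corollary \ref{cor:epivalence2}) to a morphism $v = (v_1,v_2)$ in $\calF(\calC)$ landing at $h$; since $\underline v$ is not a retraction and ($*$) is almost split, $v$ is not a retraction in $\calF(\calC)$ either (a retraction stays a retraction after applying $Q_\calC$, so its negation transfers back), hence $v$ factors through $\beta = (\beta_1,\beta_2)$, and applying $Q_\calC$ shows $\underline v$ factors through $\underline\beta$. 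The dual argument with injective envelopes handles the left-almost-split property of $\underline\alpha$. Finally, invoking Theorem \ref{thm:ass_equiv_ars} or simply the definition, a non-split cw-exact sequence whose end map is right almost split and whose start map is left almost split is an almost split sequence; since $\underline h$ is not of type (a), (b), (c), we may also directly appeal to the existence of almost split sequences ending at $\underline h$ in $\calH^{\rm cw}(\underline\calC)$ guaranteed by Proposition \ref{p almost split seq} together with uniqueness, to identify ($**$) with it.

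The main obstacle I anticipate is the passage back and forth across the stable quotient: specifically, checking that a morphism in $\calF(\calC)$ fails to be a retraction \emph{knowing only} that its stable image fails to be a retraction, and dually that the cw-exact sequence ($**$) does not split in $\underline\calC$. The clean way around this is to route everything through the representation-equivalence $\pi':\underline{\calF(\calC)}\to \calH(\underline\calC)$ (and its analog $\pi$), which is full, dense and reflects isomorphisms, so that "retraction" and "split" are detected there; one still must be careful that $\pi'$ only reflects isomorphisms of \emph{objects}, so the non-splitting argument should be phrased as: a split ($**$) would make $\pi'$ of the middle term of ($*$) decompose, and then reflection of isomorphisms forces a corresponding decomposition of the middle term of ($*$) in $\underline{\calF(\calC)}$, contradicting indecomposability of the short exact sequence up to the sporadic exceptions. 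Handling the type-(a)/(b)/(c) bookkeeping carefully — i.e. confirming that exactly the sporadic sequences are excluded — is routine but must be done to make the "Moreover" clause airtight.
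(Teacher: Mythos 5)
Your verification that $(\underline{\beta_1},\underline{\beta_2})$ is right almost split (lift a non-retraction through the full, dense functor $\calF(\calC)\to\calH(\underline\calC)$ of the dual of Corollary \ref{cor:epivalence2}, use that retractions are preserved, factor upstairs, push back down) is exactly the paper's argument and is fine. But two substantive steps are wrong or missing. First, cw-exactness of $(**)$: you assert that the rows of $(*)$ become split in $\underline\calC$ ``because $\alpha_i$ is an inflation''; this is false --- a conflation in $\calC$ does not split in the stable category (in a Frobenius category it becomes a triangle, not a split sequence), and no projectivity of the middle terms is assumed. The correct reason is that $(*)$ is itself already cw-exact, i.e.\ degreewise split in $\calC$, by Remark \ref{rem:nonspo_cw}, precisely because it is not sporadic; the additive functor $Q_\calC$ then keeps the rows split. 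Second, non-splitness of $(**)$: your argument conflates ``the middle term decomposes'' with ``the sequence splits''. If $(**)$ split you would get $\underline g\cong \underline f\oplus\underline h$ and, via $\pi'$, $g\cong f\oplus h$ in $\underline{\calF(\calC)}$; but middle terms of almost split sequences are typically decomposable, and a stable isomorphism only determines $g$ up to projective summands, so ``split modulo projectives'' is not a contradiction with $(*)$ being almost split. The paper's key idea, absent from your sketch, is to write $(*)$ in the triangular form $\delta^{\rm cw}_{f,c,h}$ of Remark \ref{rem:cw_tri_form}; then splitting of $(**)$ means $\underline c\in\Htp(\underline f,\underline h)$ by Corollary \ref{cor:cw_split}, so $c-c'\in\Htp(f,h)$ with $c'$ factoring through a projective, and since $f$ is a deflation the map $c'$ lifts through $f$, whence $c\in\Htp(f,h)$ and $(*)$ itself splits --- a genuine contradiction.

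The ``Moreover'' bookkeeping is also off. Sporadic case (2) of Lemma \ref{lem:sporadic} ends at $P(C)\to C$, whose image in $\calH(\underline\calC)$ is of type (a) $0\to C$ (not type (b)); type (b) corresponds to sporadic case (1); and type (c) for $\underline h$ means $C_2$ is \emph{projective}, not only $C_2=0$: in that case one argues that the deflation $h$ splits and indecomposability forces $h\cong(P=P)$, a projective object of $\calF(\calC)$, which cannot terminate an almost split sequence, while the possibility $C_2=0$ is exactly sporadic case (3), excluded by hypothesis. Finally, your separate ``dual argument with injective envelopes'' for left almost splitness is misdirected (injectives and the costable side play no role here); the paper instead simply notes that $\underline f$ and $\underline h$ remain indecomposable because their stable endomorphism rings are quotients of local rings, which together with non-splitness and the right almost split property suffices.
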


\begin{proof}
It suffices to show that $\underline f$ and $\underline h$ are indecomposable and ${\left(\begin{smallmatrix}\underline{\beta_1} \\ \underline{\beta_2}\end{smallmatrix}\right)} $ is right almost split.

Notice that both $f$ and $h$ are  indecomposable non-$\Ext$-projective objects in $\calF(\calC)$. According to Corollary \ref{cor:qc_epiv}, they are indecomposable in $\underline{\calF(\calC)}$. Hence by the dual of Theorem \ref{thm:epivalence}, both $\underline f$ and $\underline h$ are indecomposable in $\calH(\underline\calC)$.

It is easy to see the sequence is {\rm cw}-exact. We claim that it is also non-split. In fact, without loss of generality, we can assume the exact sequence (\ref{eq:epi}) equals $\delta^{\rm cw}_{f,c,h}$, i.e. $g=\left(\begin{smallmatrix}  f&  c\\0& h\end{smallmatrix}\right)$ (see Remark \ref{rem:cw_tri_form}). If the exact sequence (\ref{eq:stable}) were split, then by Corollary \ref{cor:cw_split}, $\underline c\in \Htp(\underline f, \underline h)$. That is, there exists a morphism $c':B_1\to A_2$ which factors through  $\Ext$-projectives such that $c-c'\in\Htp(f,h)$. But since $f$ is an epimorphism, $c'$ can factor through $f$ and hence $c'\in\Htp(f,h)$. Thus, $c\in\Htp(f,h)$ and the exact sequence (\ref{eq:epi}) splits, which is a contradiction.

We claim that ${\left(\begin{smallmatrix}\underline{\beta_1} \\ \underline{\beta_2}\end{smallmatrix}\right)} $ is right almost split. Let ${\left(\begin{smallmatrix}\underline{u} \\ \underline{v}\end{smallmatrix}\right)}:{\left(\begin{smallmatrix}X \\ Y\end{smallmatrix}\right)}_{\underline{\gamma}}\to {\left(\begin{smallmatrix}C_1 \\ C_2\end{smallmatrix}\right)}_{\underline{h}}$ be a non-retraction in $\calH(\underline\calC)$. By the dual of Corollary \ref{cor:epivalence2}, there exists ${\left(\begin{smallmatrix}{u} \\ {v}\end{smallmatrix}\right)}:{\left(\begin{smallmatrix}X \\ Y\end{smallmatrix}\right)}_{ \gamma}\to {\left(\begin{smallmatrix}C_1 \\ C_2\end{smallmatrix}\right)}_{h}$ such that under the stable functor $\pi'Q^{\calF(\calC)}:\calF(\calC)\to \calH(\underline\calC)$, $\pi'Q^{\calF(\calC)}\left(\begin{smallmatrix}{u} \\ {v}\end{smallmatrix}\right)=\left(\begin{smallmatrix}\underline{u} \\ \underline{v}\end{smallmatrix}\right)$. Since the stable functor $\pi'Q^{\calF(\calC)}$ preserves retractions, the map $\left(\begin{smallmatrix}{u} \\ {v}\end{smallmatrix}\right)$ is not a retraction. Hence it factors through $\left(\begin{smallmatrix}B_1 \\ B_2\end{smallmatrix}\right)_g$ and therefore $\left(\begin{smallmatrix}\underline{u} \\ \underline{v}\end{smallmatrix}\right)$ factors through $\left(\begin{smallmatrix}B_1 \\ B_2\end{smallmatrix}\right)_{\underline g}$.
Hence ${\left(\begin{smallmatrix}\underline{\beta_1} \\ \underline{\beta_2}\end{smallmatrix}\right)} $ is right almost split. 

Last we show the ``Moreover'' part.
$(1)$ Assume that $\underline h$ is of the type (a), i.e. $h:C_1\to C_2$ for some $\Ext$-projective object $C_1\in \calC$.
Due to the indecomposability of $h$, it must be  an $\Ext$-projective cover $h:P(C_2)\to C_2$ for some indecomposable non-$\Ext$-projective object $C_2$, which means (\ref{eq:epi}) is of sporadic case (2).
 
$(2)$ Assume that $\underline h$ is  of the type (b). The surjectivity and indecomposability of $h$ implies that $h$ is isomorphic to $C=C$, which means $(*)$ is of sporadic case (1).  

$(3)$ Assume that $\underline h$ is  of the type (c), i.e. $h:C_1\to C_2$ for some $\Ext$-projective object $C_2\in\calC$. It implies that $h$ is isomorphic to an $\Ext$-projective object $P=P$, which is a contradiction.\end{proof}

\subsection{Proof of Theorem \ref{thm:A}}\label{subsec:proof}
First of all, we remark that the formulas are well-defined. In fact, for the first formula, since $\tau_\calC$ is a functor $\underline \calC\to \overline \calC$, $\tau_\calC\underline{\bcok (f)}$ is a morphism in $\overline \calC$. According to Remark \ref{rem:mimo_well_def}, $\Mimo\tau_\calC\underline{\bcok (f)}$ is well-defined. For the second one, $\Mimo\tau_\calC (\underline f)$ is an object in $\overline{\calS(\calC)}$. Notice that the RSS-equivalence functor $\bcok:\calS(\calC)\to \calF(\calC)$ preserves $\Ext$-injective objects. So it can be regarded as a functor $\overline{\calS(\calC)}\to \overline{\calF(\calC)}$, and hence it is well-defined.

Next, realize that it suffices to prove the formula for $\tau_\calF$. The formula for $\tau_\calS$ holds subsequently by RSS-equivalence, which preserves almost split sequences. 

It is straightforward to check that $\tau_\calF f=\cok\Mimo\tau_\calC(\underline f)$ holds for objects $f$ of the forms $C\to C$, $C\to 0$ and $P(C)\to C$ by Lemma \ref{lem:sporadic}. In below, we will show the formula also holds for indecomposable objects $C_1\to C_2$ in $\calF(\calC)$ not of the above three forms. In particular, an almost split sequence in $\calF(\calC)$ ending at $C_1\to C_2$ will be {\rm cw}-exact, due to Remark \ref{rem:nonspo_cw}. 

Note that since $\calC$ has enough  $\Ext$-projectives, the stable category $\underline\calC$ has pseudo-kernel. Hence $\underline\calC\modd$ is an abelian category. Applying Theorem \ref{thm:EH} to the sequence (\ref{eq:stable}) in $\calH(\underline\calC)^{\rm cw}$, we obtain the following commutative diagram in $\underline\calC\modd$ with exact rows and columns such that the bottom row is an almost split sequence. We abbreviate $\Hom_{\underline\calC}(-,\underline f)$ for $(-,\underline f)$.
$$\xymatrix{
		0 \ar[r] &(-, A_1)\ar[d]^{(-,\underline f)} \ar[r]^{(-,\underline{\alpha_1})} & (-, B_1)
		\ar[d]^{(-,\underline g)}\ar[r]^{(-,\underline{\beta_1})} & (-, C_1) \ar[d]^{(-,\underline h)} \ar[r] & 0\\
		0 \ar[r] &(-, A_2) \ar[d] \ar[r]^{(-,\underline{\alpha_2})} & (-, B_2)
		\ar[d] \ar[r]^{(-, \underline{\beta_2})} & (-, C_2) \ar[d] \ar[r] & 0\\
		0\ar[r] &  F \ar[r]\ar[d] & G \ar[r]\ar[d]  & H \ar[r]\ar[d] &0\\ & 0  & 0  & 0 &  }
	$$

  Since $\calC$ is a functorially finite exact subcategory $\calV\modd$ having enough $\Ext$-projective and $\Ext$-injective  objects where $\calV$ is a dualizing $k$-variety, it has almost split sequences by Lemma \ref{C:AR-sequence}. Hence by Theorem \ref{thm:ass_equiv_ars}, $\underline\calC$ is a dualizing variety. Using methods discussed in section \ref{sec:AR_fun}, we can calculate $D\Tr H\cong F$ as below:
 
$$\xymatrix{0\ar[r]&D{\rm Tr}H\ar[r]\ar[d]_\wr& D( C_1,-)\ar[r]^{D(\underline h,-)}\ar[d]_\wr& D(C_2,-)\ar[d]_\wr\\
0\ar[r]&F\ar[r]&\Ext^1_\calC(-,\tau_\calC C_1)\ar@< 2pt>[r]^{\Ext^1_\calC(-,\tau_\calC \underline h)}& \Ext^1_\calC(-,\tau_\calC C_2).}$$

Denote by $p=\mimo \tau_\calC \underline h$. We claim it is an indecomposable object in $\calS(\calC)_\calI$. In fact, since $\underline h$ is indecomposable in $\calH(\underline \calC)$, by Proposition \ref{prop:HF_equiv}, $\tau_\calC\underline h$ is also indecomposable in $\calH(\underline\calC)$. Hence by Corollary \ref{cor:qc_epiv} and \ref{cor:epivalence2}, $\mimo\tau_\calC\underline h$ is indecomposable as well.

Consider the exact sequence:
$$
0\to \tau_\calC C_1\stackrel{p}\to \tau_\calC C_2\oplus I\stackrel{q}\to M\to 0.
$$
According to \cite[Theorem 1.26]{INP}, it induces a long exact sequence:
$$
\Hom_{\underline \calC}(-,\tau_\calC C_2\oplus I) \xrightarrow{(-,\underline q)} \Hom_{\underline \calC}(-, M)\stackrel{\theta}\to \Ext^1_{\calC}(-,\tau_\calC C_1) \xrightarrow{\Ext^1_{\calC}(-,p)}   \Ext^1_{\calC}(-,\tau_\calC C_2).
$$
As $\Ext^1_\calC(-,p)\cong\Ext^1_\calC(-,\tau_\calC \underline h)$, it follows that $F\cong \Ima \theta$. Hence $F$ has a projective presentation:
$$
\Hom_{\underline \calC}(-,\tau_\calC C_2\oplus I)\xrightarrow{(-,\underline q)} \Hom_{\underline \calC}(-, M)\to F\to 0.
$$
We claim that it is a minimal projective presentation. 
Otherwise, by Yoneda's Lemma, $q$ has a direct summand of the form $X=X$ or $X\to 0$. Since $p$ is indecomposable in $\calS(\calC)_\calI$,  $q$ is indecomposable in $\calF(\calC)_\calI$ due to the RSS-equivalence (Proposition \ref{prop:RSS}). Therefore $q$ is either a morphism $X=X$ or $X\to 0$. But this implies $F=0$, which contradicts with the fact that $F$ is the starting term of an almost split sequence.

On the other hand, $F$ also has a minimal projective presentation:
$$
\Hom_{\underline \calC}(-,A_1)\xrightarrow{(-,\underline f)} \Hom_{\underline \calC}(-,A_2)\to F\to 0.
$$

By the Comparison Lemma, $\underline f\cong \underline q$ in $\calH(\underline\calC)$.
By the dual of Theorem \ref{thm:epivalence}, there is an epivalence $\pi':\underline{\calF(\calC)}\to \calH(\underline\calC)$, which reflects isomorphisms. So we have $f\cong q=\cok\mimo\tau_\calC {\underline h} \cong\bcok\Mimo\tau_\calC \underline h$ in $\underline{\calF(\calC)}$, which finishes the proof of Theorem~\ref{thm:A}.   

\section{Monomorphism categories of Frobenius categories}
\subsection{} In this section, we will study the monomorphism categories of Frobenuis categories and their Auslander-Reiten translations. The following result is well-known.

\begin{lem}\cite[Lemma 2.1]{C1}\cite[Corollary 5.2]{ZX}\label{lem:Chen}
Let $\calC$ be an extension closed subcategory of an abelian category. Then $\calS(\calC)$ is a
 Frobenius category if and only if so is $\calC$.  
\end{lem}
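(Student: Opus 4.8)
The plan is to prove the two implications by first pinning down the relative projective and injective objects of $\calS(\calC)$ in terms of those of $\calC$, using a small package of adjoint pairs of exact functors between $\calS(\calC)$ and $\calC$, and then deducing everything formally, the only non‑formal input being a pair of ``horseshoe'' constructions inside $\calS(\calC)$.

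\emph{Step 1 (functors between $\calS(\calC)$ and $\calC$).} I would consider the three functors $\calS(\calC)\to\calC$ sending $\left(\begin{smallmatrix}X_1\\X_2\end{smallmatrix}\right)_f$ to $X_1$, to $X_2$, and to $\cok f$ (the last is well defined since $f$ is an inflation of $\calC$, so $\cok f\in\calC$); call them $s$, $t$, $c$. A conflation of $\calS(\calC)$ is degreewise a conflation of $\calC$, and the snake lemma shows the induced sequence of cokernels is again a conflation of $\calC$; hence $s,t,c$ are exact. Likewise the section functors $X\mapsto\left(\begin{smallmatrix}X\\X\end{smallmatrix}\right)_{1}$ and $X\mapsto\left(\begin{smallmatrix}0\\X\end{smallmatrix}\right)_{0}$ are exact, and a direct check of hom-sets gives that $\left(\begin{smallmatrix}-\\-\end{smallmatrix}\right)_{1}$ is left adjoint to $s$ and right adjoint to $t$, while $\left(\begin{smallmatrix}0\\-\end{smallmatrix}\right)_{0}$ is left adjoint to $t$ and right adjoint to $c$. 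Since a functor having an exact right adjoint preserves projectives and one having an exact left adjoint preserves injectives, it follows that $\left(\begin{smallmatrix}P\\P\end{smallmatrix}\right)_{1}$ and $\left(\begin{smallmatrix}0\\P\end{smallmatrix}\right)_{0}$ are projective in $\calS(\calC)$ for every projective $P$ of $\calC$, and $\left(\begin{smallmatrix}I\\I\end{smallmatrix}\right)_{1}$ and $\left(\begin{smallmatrix}0\\I\end{smallmatrix}\right)_{0}$ are injective in $\calS(\calC)$ for every injective $I$ of $\calC$.

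\emph{Step 2 ($\calC$ Frobenius $\Rightarrow$ $\calS(\calC)$ Frobenius).} Here $\calS(\calC)$ is exact by the discussion in Section 3. Given $f\colon A\to B$ in $\calS(\calC)$ with $\cok f=C$, I would choose deflations $p_A\colon P_A\to A$ and $p_C\colon P_C\to C$ with $P_A,P_C$ projective in $\calC$, lift $p_C$ along $B\to C$ to some $q\colon P_C\to B$, and form the evident map from $\left(\begin{smallmatrix}P_A\\P_A\oplus P_C\end{smallmatrix}\right)_{[1,0]^T}=\left(\begin{smallmatrix}P_A\\P_A\end{smallmatrix}\right)_{1}\oplus\left(\begin{smallmatrix}0\\P_C\end{smallmatrix}\right)_{0}$ onto $f$; a diagram chase identifies its kernel as $\left(\begin{smallmatrix}\Omega A\\K\end{smallmatrix}\right)$ with $0\to\Omega A\to K\to\Omega C\to0$, so the kernel again lies in $\calS(\calC)$ and the map is a deflation there. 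Thus $\calS(\calC)$ has enough projectives; dually, using injective envelopes of $A$ and of $\cok f$ together with an extension of the injective envelope of $A$ along $f$, it has enough injectives. As $\calC$ is Frobenius its projective and injective objects coincide, so every projective object of $\calS(\calC)$ -- being a summand of some $\left(\begin{smallmatrix}P\\P\end{smallmatrix}\right)_{1}\oplus\left(\begin{smallmatrix}0\\P'\end{smallmatrix}\right)_{0}$ with $P,P'$ projective-injective in $\calC$ -- is injective, and dually every injective object of $\calS(\calC)$ is projective. Hence $\calS(\calC)$ is Frobenius.

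\emph{Step 3 ($\calS(\calC)$ Frobenius $\Rightarrow$ $\calC$ Frobenius).} Applying the exact functor $t$ to a projective deflation onto $\left(\begin{smallmatrix}X\\X\end{smallmatrix}\right)_{1}$ in $\calS(\calC)$ yields (since $t$ has the exact right adjoint $\left(\begin{smallmatrix}-\\-\end{smallmatrix}\right)_{1}$, it preserves projectives) a projective deflation onto $X$ in $\calC$; applying $t$ to an injective inflation of $\left(\begin{smallmatrix}0\\X\end{smallmatrix}\right)_{0}$ yields (since $t$ has the exact left adjoint $\left(\begin{smallmatrix}0\\-\end{smallmatrix}\right)_{0}$, it preserves injectives) an injective inflation of $X$; so $\calC$ has enough projectives and injectives. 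If $P$ is projective in $\calC$ then $\left(\begin{smallmatrix}P\\P\end{smallmatrix}\right)_{1}$ is projective, hence injective, in $\calS(\calC)$, and extending a map $A\to P$ along the inflation $\left(\begin{smallmatrix}A\\A\end{smallmatrix}\right)_{1}\to\left(\begin{smallmatrix}B\\B\end{smallmatrix}\right)_{1}$ (for an inflation $A\to B$ of $\calC$) produces an extension $B\to P$, so $P$ is injective in $\calC$; symmetrically, if $I$ is injective in $\calC$ then $\left(\begin{smallmatrix}0\\I\end{smallmatrix}\right)_{0}$ is injective, hence projective, in $\calS(\calC)$, which forces $I$ to be projective in $\calC$. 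Therefore the projective and injective objects of $\calC$ coincide and $\calC$ is Frobenius.

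\emph{Main obstacle.} The only point that is not bookkeeping is Step 2: one must verify that the horseshoe maps are not merely degreewise deflations (resp. inflations) but deflations (resp. inflations) \emph{of $\calS(\calC)$} -- that is, that their kernels (resp. cokernels) remain inside $\calS(\calC)$. This is exactly where the exact structure of $\calC$ (equivalently, extension-closedness in $\Lambda\modd$) is genuinely used, and it is handled by computing those kernels and cokernels explicitly as iterated extensions of syzygies, resp. cosyzygies, of the given data.
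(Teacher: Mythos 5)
Your proof is correct. Note that the paper itself does not prove this lemma at all --- it is quoted with citations to \cite{C1} and \cite{ZX} --- so what you have written is a self-contained argument for a result the paper treats as known; in substance it follows the same standard route as those sources: identify the projective and injective objects of $\calS(\calC)$ as (summands of) $(P\to P)\oplus(0\to P')$ and $(I\to I)\oplus(0\to I')$, and get enough projectives/injectives by the two horseshoe constructions. Your packaging via the adjunction chain $c\dashv d_0\dashv t\dashv d_1\dashv s$ (with $d_1(X)=(X\xrightarrow{1}X)$, $d_0(X)=(0\to X)$, and $s,t,c$ the first-component, second-component and cokernel functors, all exact by the degreewise structure plus the snake lemma) is a clean way to obtain both ``preserves projectives/injectives'' statements at once, and it also powers Step 3, which the cited sources do not always state explicitly since they only need one direction. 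Two spots are left implicit but are genuinely easy and consistent with your snake-lemma computation: in Step 2 one must note that the second component $[fp_A,q]\colon P_A\oplus P_C\to B$ is a deflation of $\calC$ (its kernel $K$ sits in $0\to\Omega A\to K\to\Omega C\to 0$, hence lies in $\calC$ by extension-closedness, which is exactly the point you flag), and in Step 3 the final ``forces $I$ to be projective in $\calC$'' step needs the observation that $(0\to B)\to(0\to C)$ is a deflation of $\calS(\calC)$ for any deflation $B\to C$ of $\calC$, so that projectivity of $(0\to I)$ can be tested against it. With those two sentences added, the argument is complete.
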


Hence, for a Frobenius category $\calC$, both $\overline\calC$ and $\overline{\calS(\calC)}$ are triangulated categories. First, we shall compare their suspension functors. Denote by $[1]$ and $\la 1\ra$ for the suspension functors on $\overline\calC$ and $\overline{\calS(\calC)}$ respectively.
\begin{prop} \label{prop:shift}
For any object $f\in \overline{\calS(\calC)}$, $f\la 1\ra\cong \Mimo (\overline f[1])$.
\end{prop}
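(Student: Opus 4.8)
The plan is to compute the suspension $f\la 1\ra$ in the Frobenius category $\calS(\calC)$ directly from an injective-envelope conflation in $\calS(\calC)$, and then compare it with $\Mimo(\overline f[1])$ using the epivalence $\pi:\overline{\calS(\calC)}\to\calH(\overline\calC)$ and the fact that $\Mimo$ realizes the inverse of $\pi$ on objects (Remark \ref{rem:mimo_well_def}, Lemma \ref{lem:mimo_stable}). By Lemma \ref{lem:Chen}, $\calS(\calC)$ is Frobenius, so its injective (=projective) objects are the ones in $\add\{\left(\begin{smallmatrix}I\\I\end{smallmatrix}\right)_1,\left(\begin{smallmatrix}0\\I\end{smallmatrix}\right)_0: I\ {\rm injective\ in}\ \calC\}$. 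First I would fix an indecomposable $f:A\to B$ in $\calS(\calC)_\calI$ (it suffices to treat these by Lemma \ref{lem:mimo_stable}), and build an inflation of $f$ into such an injective object: take the injective envelope $\varepsilon:B\to I(B)$ in $\calC$ together with a lift $e:A\to I(A)$ of the injective envelope of $A$ through $f$, giving the inflation $\left(\begin{smallmatrix}e\\ [\varepsilon f,?]^T\end{smallmatrix}\right)$ — more precisely I would use the standard pushout-style conflation
$$0\to\left(\begin{smallmatrix}A\\B\end{smallmatrix}\right)_f\to\left(\begin{smallmatrix}I(A)\\ I(A)\oplus B/\!\!\sim\end{smallmatrix}\right)\to\left(\begin{smallmatrix}\Omega^{-1}_\calC A\\ \cok\end{smallmatrix}\right)\to 0$$
in $\calS(\calC)$ whose middle term is injective in $\calS(\calC)$; the cokernel term is then a representative of $f\la 1\ra$.

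The key computation is to identify the first component of that cokernel with $\Omega^{-1}_\calC A$, which is exactly $\overline A[1]$ in $\overline\calC$, and to check that the resulting map $\Omega^{-1}_\calC A\to (\text{second component})$ becomes, after applying $Q^\calC$, isomorphic in $\calH(\overline\calC)$ to $\overline f[1]$. Here I would use that the suspension $[1]$ on $\overline\calC$ is computed by cosyzygies $\Omega^{-1}_\calC$, and that $\calH(-)$ applied to an equivalence (or to the functor $[1]$ on $\overline\calC$, which on objects is $\Omega^{-1}_\calC$) behaves well, so that $\overline f[1]$ in $\calH(\overline\calC)$ is represented by the induced map on cosyzygies. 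Having matched $\pi(f\la1\ra)=Q^\calC(\text{cokernel term})\cong\overline f[1]$ in $\calH(\overline\calC)$, I then invoke that $\pi$ is an epivalence (Theorem \ref{thm:epivalence}), hence reflects isomorphisms, together with Remark \ref{rem:mimo_well_def}: the unique object $u\in\overline{\calS(\calC)}$ with $\pi(u)\cong\overline f[1]$ is by definition $\Mimo(\overline f[1])$. Since $\pi(f\la1\ra)\cong\overline f[1]$ as well, reflection of isomorphisms gives $f\la1\ra\cong\Mimo(\overline f[1])$ in $\overline{\calS(\calC)}$.

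There is one subtlety to handle: the cokernel term of my chosen conflation need not lie in $\calS(\calC)_\calI$, i.e. it may have injective summands, so strictly it represents $f\la1\ra$ only up to injective summands in $\overline{\calS(\calC)}$ — but that is exactly the ambiguity $\overline{\calS(\calC)}$ quotients out, so passing to the costable category is harmless. I would phrase the comparison entirely in $\calH(\overline\calC)$ and $\overline{\calS(\calC)}$ to avoid bookkeeping of these summands. I also want the naturality-free version: the statement is only about objects $f$, so I do not need $\Mimo(\overline{f}[1])$ to be functorial in $f$, which is consistent with the warning in Remark \ref{rem:mimo_well_def} that $\Mimo$ is not a functor on $\calH(\overline\calC)$.

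The main obstacle I anticipate is verifying that the induced map on cosyzygies genuinely represents $\overline f[1]$ in $\calH(\overline\calC)$ — that is, matching the ad hoc cokernel construction inside $\calS(\calC)$ with the intrinsic triangulated-category suspension of the morphism $\overline f$ viewed as an object of $\calH(\overline\calC)$. This requires choosing the injective envelopes in $\calS(\calC)$ compatibly with those in $\calC$ (degreewise), and then a diagram chase showing the two squares agree up to maps factoring through injectives. Once that identification is pinned down, the rest is a formal consequence of $\pi$ being an epivalence.
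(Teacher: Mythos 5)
Your overall plan (compute $f\la 1\ra$ from a conflation in $\calS(\calC)$ with injective middle term, then identify the cokernel with $\Mimo(\overline f[1])$) is the same as the paper's, and your endgame via the epivalence $\pi$ and Lemma \ref{lem:mimo_stable} would be fine; but the concrete conflation you commit to does not work, and this is a genuine gap rather than bookkeeping. Your middle term is the pushout object $\left(\begin{smallmatrix}I(A)\\ (I(A)\oplus B)/\sim\end{smallmatrix}\right)$; since $I(A)$ is injective the pushout splits as $I(A)\oplus\cok f$, so this object is isomorphic to $\left(\begin{smallmatrix}I(A)\\ I(A)\end{smallmatrix}\right)_1\oplus\left(\begin{smallmatrix}0\\ \cok f\end{smallmatrix}\right)$, which is injective in $\calS(\calC)$ only when $\cok f$ is injective in $\calC$. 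Worse, the cokernel of that conflation is not what you wrote: both degreewise cokernels are the cosyzygy $\Omega^{-1}A$ and the induced map between them is an isomorphism, so the third term is $\left(\begin{smallmatrix}\Omega^{-1}A\\ \Omega^{-1}A\end{smallmatrix}\right)_1$, whose image under $\pi$ is certainly not $\overline f[1]$ in general. A concrete failure: for $f=(0\to B)$ with $B$ indecomposable non-injective, your conflation is $0\to f\to f\to 0\to 0$ and would give $f\la 1\ra=0$, whereas $f\la 1\ra\cong\Mimo(\overline f[1])\cong(0\to\Omega^{-1}B)\neq 0$ in $\overline{\calS(\calC)}$.

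The paper's proof repairs exactly this point: instead of embedding $A$ into $I(A)$, it embeds $A$ into $I(B)$ via $i_A=i_B f$, so the \emph{same} injective appears in both degrees; the induced map $f':I(B)/A\to I(B)/B$ is an epimorphism with $\overline{f'}\cong\overline f[1]$, and the conflation used has middle term $\left(\begin{smallmatrix}I(B)\\ I(B)\end{smallmatrix}\right)_1\oplus\left(\begin{smallmatrix}0\\ I'\end{smallmatrix}\right)$ (genuinely injective in $\calS(\calC)$) and third term $\Mimo f'=[f',e]^T$. Note that the extra summand $I'$ together with $e$ is needed precisely so that the third term is a monomorphism, i.e.\ lies in $\calS(\calC)$ at all; this is the real subtlety, which your ``only up to injective summands'' remark does not address. (A variant closer to your wording would take the injective envelope of $f$ in $\calS(\calC)$, whose degree-one component is $I(A)\oplus I(\cok f)$ rather than the pushout; that also works.) Once the cokernel is literally $\Mimo f'$, the conclusion $f\la 1\ra\cong\Mimo f'\cong\Mimo(\overline f[1])$ is immediate, and your detour through reflection of isomorphisms is unnecessary, though harmless, at that stage.
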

\begin{proof}
It suffices to prove for indecomposable objects $\left(\begin{smallmatrix}A\\B\end{smallmatrix}\right)_f\in\calS(\calC)_\calI$. Suppose $i_B:B\to I(B)$ is an $\Ext$-injective envelope and $i_A=i_B\circ f$. Then there is a commutative diagram with exact rows:
$$
\xymatrix{0\ar[r]&A\ar[r]^{i_A}\ar[d]^f&I(B)\ar[r]^{\pi_A}\ar@{=}[d]&A'\ar[r]\ar[d]^{f'}&0\\
0\ar[r]&B\ar[r]^{i_B}&I(B)\ar[r]^{\pi_B}&B'\ar[r]&0,\\}
$$
where  $f'$ is an epimorphism satisfying $\overline{f'}\cong \overline f[1]$ in $\overline\calC$. Let $\Mimo f'=[f',e]: A'\to B'\oplus I'$. Then it is easy to check that there is an exact sequence in $\calS(\calC)$ below:
$$
\xymatrix{0\ar[r]&A\ar[r]^{i_A}\ar[d]^f&I(B)\ar[r]^{\pi_A}\ar[d]^{\left[\begin{smallmatrix}1\\0\end{smallmatrix}\right]}&A'
\ar[r]\ar[d]^{\left[\begin{smallmatrix}f'\\e\end{smallmatrix}\right]}&0\\
0\ar[r]&B\ar[r]^{\left[\begin{smallmatrix}i_B\\0\end{smallmatrix}\right]}&I(B)\oplus I'\ar@< 2pt>[r]^{\left[\begin{smallmatrix}\pi_B &0\\ e\pi_A&1\end{smallmatrix}\right]}&B'\oplus I'\ar[r]&0.\\}
$$
Hence, in the stable category $\overline{\calS(\calC)}$, we have $f\la 1\ra\cong \Mimo f'\cong \Mimo (\overline f[1])$ by the Remark \ref{rem:mimo_well_def}.
\end{proof}

Throughout the rest of this section,   let $\calV$ be a dualizing $k$-variety and $\calC$ be a functorially finite Frobenius subcategory of $\calV\modd$ having enough $\Ext$-projective and $\Ext$-injective  objects. Denote by $\tau$ the Auslander-Reiten translation on $\calC$. According to Theorem \ref{thm:A} and Lemma \ref{lem:Chen}, $\calS(\calC)$ is again a Frobenius category having Auslander-Reiten-Serre duality. Denote by $\tau_\calS$ the Auslander-Reiten translation on $\calS(\calC)$. Notice that triangle functors $\tau$ and $\tau_\calS$ yield auto-equivalences of the stable categories $\overline\calC$ and $\overline{\calS(\calC)}$ respectively.

To deduce a formula for the triangle functor $\tau_\calS$, we need the following notation.

\begin{defn}
Let $\left(\begin{smallmatrix}X\\Y\end{smallmatrix}\right)_{\overline f}\in\calH(\overline\calC)$, which fits into a distinguished triangle $X\stackrel{\overline f}\to Y\stackrel{\overline g}\to Z\stackrel{\overline h}\to X[1]$ in $\overline\calC$, the rotation operation $\Rot \left(\begin{smallmatrix}X\\Y\end{smallmatrix}\right)_{\overline f}$ is defined to be $\left(\begin{smallmatrix}Y\\Z\end{smallmatrix}\right)_{\overline g}$.
\end{defn}
We will write $\Rot(\overline f)$ for convenience. From the axioms of triangulated category, it is clear that $\Rot$ is well-defined on objects of $\calH(\overline\calC)$.  However, we warn the reader that in general it is not a functor.

\begin{prop} \ \label{Rot-Cok}
\begin{enumerate}
\item For any $\overline f\in{\calH(\overline\calC)}$, $\Rot^3 \overline f \cong -\overline f[1]$ and $(\Rot\overline f)[1]=\Rot (\overline f[1])$.
\item For any $f\in\overline{\calS(\calC)}$, $\overline{\bcok f}=\Rot \overline f$.
\end{enumerate}
\end{prop}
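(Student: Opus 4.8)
The plan is to prove the two items separately. Item (1) is a purely formal consequence of the axioms of a triangulated category together with the definition of $\Rot$ as rotation of distinguished triangles, whereas item (2) is a matter of recalling Happel's construction of the triangulated structure on $\overline\calC=\underline\calC$.

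For (1), I would fix a distinguished triangle $X\xrightarrow{\overline f}Y\xrightarrow{\overline g}Z\xrightarrow{\overline h}X[1]$ in $\overline\calC$, so that $\Rot\overline f=\left(\begin{smallmatrix}Y\\ Z\end{smallmatrix}\right)_{\overline g}$ by definition. The rotation axiom gives the distinguished triangle $Y\xrightarrow{\overline g}Z\xrightarrow{\overline h}X[1]\xrightarrow{-\overline f[1]}Y[1]$, whence $\Rot^2\overline f=\left(\begin{smallmatrix}Z\\ X[1]\end{smallmatrix}\right)_{\overline h}$, and rotating once more produces $Z\xrightarrow{\overline h}X[1]\xrightarrow{-\overline f[1]}Y[1]\xrightarrow{-\overline g[1]}Z[1]$, so that $\Rot^3\overline f=\left(\begin{smallmatrix}X[1]\\ Y[1]\end{smallmatrix}\right)_{-\overline f[1]}\cong-\overline f[1]$. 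For the compatibility with the shift, applying the autoequivalence $[1]$ to the original triangle yields the distinguished triangle $X[1]\xrightarrow{\overline f[1]}Y[1]\xrightarrow{\overline g[1]}Z[1]\xrightarrow{\overline h[1]}X[2]$, hence $\Rot(\overline f[1])=\left(\begin{smallmatrix}Y[1]\\ Z[1]\end{smallmatrix}\right)_{\overline g[1]}=(\Rot\overline f)[1]$. Since $\Rot$ is well-defined on objects, these identities do not depend on the chosen triangle.

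For (2), I would start from the fact that an object $f\in\overline{\calS(\calC)}$ is, at the level of objects, an inflation $f:A\to B$ of the Frobenius category $\calC$, hence it fits into a conflation $0\to A\xrightarrow{f}B\xrightarrow{p}C\to 0$ with $C$ its cokernel, and by the definition of the RSS-equivalence $\bcok f=\left(\begin{smallmatrix}B\\ C\end{smallmatrix}\right)_{p}$. By Happel's construction of the triangulated structure on $\overline\calC=\underline\calC$ (cf. \cite{Hap1}), this conflation induces a distinguished triangle $A\xrightarrow{\overline f}B\xrightarrow{\overline p}C\to A[1]$; in particular the triangle completing $\overline f$ can be chosen so that its second morphism is the stable class $\overline p$ of the deflation. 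By the definition of $\Rot$ it then follows that $\Rot\overline f=\left(\begin{smallmatrix}B\\ C\end{smallmatrix}\right)_{\overline p}$, which is exactly $\overline{\bcok f}$, namely $\pi'$ applied to the image of $\bcok f$ in $\underline{\calF(\calC)}=\overline{\calF(\calC)}$.

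Both arguments are essentially formal, so I do not expect a real obstacle; the delicate points are bookkeeping. One must match the sign conventions of the rotation axiom so that the minus signs in the intermediate triangles, and therefore in $\Rot^3\overline f\cong-\overline f[1]$, appear correctly, and one must invoke Happel's theorem in the precise form asserting that the second arrow of the triangle attached to a conflation is (the stable class of) its deflation; this is what lets one identify $\Rot\overline f$ with $\overline{\bcok f}$ on a common choice of representative rather than merely up to a further isomorphism. It is also worth recording that in $\calH(\overline\calC)$ there is an isomorphism $\left(\begin{smallmatrix}U\\ V\end{smallmatrix}\right)_{u}\cong\left(\begin{smallmatrix}U\\ V\end{smallmatrix}\right)_{-u}$ given by the pair $(1,-1)$, so the sign appearing in the first assertion of (1) is harmless up to isomorphism.
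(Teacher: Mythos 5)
Your route is the same as the paper's: item (1) is the formal rotation argument, and item (2) is Happel's construction applied to the conflation $0\to A\xrightarrow{f}B\to \cok f\to 0$ (the paper delegates exactly this point to Lemma 6.1 of \cite{RS2}), and your conclusions are correct. One intermediate claim, however, is false as stated, and it is precisely the sign issue you flagged as delicate: applying $[1]$ to a distinguished triangle $X\xrightarrow{\overline f}Y\xrightarrow{\overline g}Z\xrightarrow{\overline h}X[1]$ does \emph{not} in general yield a distinguished triangle with maps $\overline f[1],\overline g[1],\overline h[1]$. What the rotation axiom gives (rotating three times) is that $(-\overline f[1],-\overline g[1],-\overline h[1])$ is distinguished; changing the signs of \emph{two} of the three maps is harmless, which yields the distinguished triangle $X[1]\xrightarrow{\overline f[1]}Y[1]\xrightarrow{\overline g[1]}Z[1]\xrightarrow{-\overline h[1]}X[2]$ used in the paper, but changing an odd number of signs (equivalently, dropping the minus on the connecting morphism) is not legitimate in a general triangulated category -- already for the cone triangle of $n:\mathbb Z\to\mathbb Z$ with $n\ge 3$ in $D(\mathbb Z)$, the triangle with only the connecting map negated is not distinguished. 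Since $\Rot$ only records the first two morphisms of a completing triangle, your identity $\Rot(\overline f[1])\cong(\Rot\overline f)[1]$ survives unchanged once you quote the correctly signed triangle; the rest of (1), including absorbing the sign in $\Rot^3\overline f\cong-\overline f[1]$ via the isomorphism $(1,-1)$ in $\calH(\overline\calC)$, and all of (2) are fine and match the intended argument.
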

\begin{proof}\ The formula  $\Rot^3 \overline f \cong -\overline f[1]$ follows from the axioms of triangulated category  
directly. For the formula  $\overline{\bcok f}=\Rot \overline f$, its proof is essentially  the same as Lemma 6.1 in \cite{RS2}. Here we only need to prove $(\Rot\overline f)[1]=\Rot (\overline f[1])$. Let  $X\stackrel{\overline f}\to Y\stackrel{\overline g}\to Z\stackrel{\overline h}\to X[1]$ be a distinguished triangle in $\overline\calC$. Then by the axioms of triangulated category, 
   $X[1]\stackrel{-\overline {f}[1]}\longrightarrow Y[1]\stackrel{-\overline {g}[1]} \longrightarrow Z[1]\stackrel{-\overline {h}[1]} \longrightarrow  X[2]$ is a distinguished triangle, which is isomorphic to the triangle $X[1]\stackrel{\overline {f}[1]}\longrightarrow Y[1]\stackrel{\overline {g}[1]}\longrightarrow Z[1]\stackrel{-\overline {h}[1]}\longrightarrow X[2]$. Hence by the definition of $\Rot$, we have  $(\Rot\overline f)[1]=\Rot (\overline f[1])$. 
\end{proof}

Recall that the stable quotient $\pi:\overline{\calS(\calC)}\to\calH(\overline\calC)$ is a full and dense functor.
\begin{lem}\label{lem:rot}
Let $ f\in\overline{\calS(\calC)}$. Then $\overline{\tau_\calS f}\cong \tau\Rot\overline f$ in $\calH(\overline{\calC})$.
\end{lem}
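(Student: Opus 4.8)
The plan is to combine the Auslander–Reiten translation formula of Theorem \ref{thm:A} with the two structural identities already established for $\Rot$ and for the suspension functors, namely $\overline{\bcok f}=\Rot\overline f$ (and $\overline{\bKer}$ on the other side), and $f\langle 1\rangle\cong\Mimo(\overline f[1])$ from Proposition \ref{prop:shift}. First I would apply the formula $\tau_\calS(f)\cong\Mimo\tau_\calC\underline{\bcok(f)}$ of Theorem \ref{thm:A}(2). In the Frobenius setting the stable and costable categories coincide, so $\underline{\bcok(f)}=\overline{\bcok f}$, which by the proposition above equals $\Rot\overline f$ as an object of $\calH(\overline\calC)$. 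Hence $\tau_\calS f\cong\Mimo\,\tau_\calC(\Rot\overline f)$ in $\overline{\calS(\calC)}$.

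Next I would push this equality through the epivalence $\pi:\overline{\calS(\calC)}\to\calH(\overline\calC)$ of Theorem \ref{thm:epivalence}. By Remark \ref{rem:mimo_well_def}, for any $\overline g\in\calH(\overline\calC)$ the object $\Mimo\overline g$ is characterized up to isomorphism by $\pi(\Mimo\overline g)\cong\overline g$. Applying this with $\overline g=\tau_\calC(\Rot\overline f)$, which is a genuine object of $\calH(\overline\calC)$ since $\tau_\calC:\calH(\underline\calC)\to\calH(\overline\calC)$ lands there, we get
$$\pi(\tau_\calS f)\cong\pi\bigl(\Mimo\,\tau_\calC(\Rot\overline f)\bigr)\cong\tau_\calC(\Rot\overline f)=\tau\,\Rot\overline f.$$
On the other hand, $\pi(\tau_\calS f)$ is by definition the image $\overline{\tau_\calS f}$ of the monomorphism $\tau_\calS f$ in $\calH(\overline\calC)$. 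Comparing the two expressions gives $\overline{\tau_\calS f}\cong\tau\Rot\overline f$, which is exactly the asserted identity.

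The one point that needs a little care — and which I expect to be the main obstacle — is the identification $\underline{\bcok(f)}=\Rot\overline f$ inside $\calH(\overline\calC)$: the formula in Theorem \ref{thm:A}(2) feeds $\underline{\bcok(f)}$ into $\calH(\underline\calC)$ via the epivalence $\pi'$, whereas the identity $\overline{\bcok f}=\Rot\overline f$ was phrased on the costable side, so one must invoke the Frobenius hypothesis (Lemma \ref{lem:Chen}) to see $\underline\calC=\overline\calC$, $\underline{\calF(\calC)}=\overline{\calF(\calC)}$, and that $\pi'$ and the costable functor agree under this identification. Once that bookkeeping is in place, the remaining steps are purely formal: everything reduces to the defining property of $\Mimo$ on $\calH(\overline\calC)$ and the fact that $\tau_\calC$ sends objects of $\calH(\underline\calC)$ to objects of $\calH(\overline\calC)$. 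I would not expect to need the morphism-level subtleties of $\tau_\calC$ here, since the statement concerns objects only.
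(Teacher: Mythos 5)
Your proposal is correct and follows essentially the same route as the paper: apply the formula of Theorem \ref{thm:A}, identify $\underline{\bcok f}$ with $\Rot\overline f$ via the Frobenius identification $\underline\calC=\overline\calC$ and the identity $\overline{\bcok f}=\Rot\overline f$, and then cancel $\Mimo$ against the epivalence $\pi$ using $\pi\circ\Mimo=\calH(Q^\calC)$ (Remark \ref{rem:mimo_well_def}). The paper's one-line proof is exactly this chain $\overline{\tau_\calS f}\cong \overline{\Mimo\tau\underline{\bcok f}}\cong \tau\overline{\bcok f}\cong\tau\Rot\overline f$, with the bookkeeping you spell out left implicit.
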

\begin{proof}
By Theorem \ref{thm:A} and Proposition \ref{Rot-Cok} (2), $\overline{\tau_\calS f}\cong \overline{\Mimo\tau\underline{\bcok f}}\cong \tau\overline{\bcok f} \cong \tau\Rot\overline f$.  
\end{proof}

\subsection{Stably Calabi-Yau Frobenius categories}
Recall that on a $k$-linear $\Hom$-finite additive category $\calA$, a right Serre functor is an additive functor $F:\calA\to\calA$ together with isomorphisms: $$\eta_{A,B}:\Hom(A,B)\to D\Hom(B,FA)$$ for any $A,B\in\calA$ and natural in $A$ and $B$. A left Serre functor $G$ is an additive functor $G:\calA\to \calA$ together with natural isomorphisms $$\xi_{A,B}:\Hom(A,B)\to D\Hom(GB,A).$$
Any left or right Serre functor is full-faithful \cite{RV}. A Serre functor is a right Serre functor $\bS$ which is dense. It follows that $\calA$ admits a Serre functor if and only if it has both a left and a right Serre functor \cite{RV}.

A $k$-linear $\Hom$-finite triangulated category $\calT$ is called $d$-Calabi-Yau, if $\calT$ has a Serre functor $\bS=[d]$, i.e. there is a bifunctorial isomorphisms $\Hom(A,B)\cong D\Hom(B,A[d])$ for all $A,B\in\calT$. Due to a well-known result from \cite{RV}, for a $k$-linear $\Hom$-finite triangulated category $\calT$, $\calT$ has Auslander-Reiten triangles if and only if it admits a Serre functor.

A $k$-linear $\Hom$-finite Frobenius category $\calC$ is called stably $d$-Calabi-Yau, if the stable category $\underline \calC$ is a $d$-Calabi-Yau triangulated category. Furthermore for a stably $d$-Calabi-Yau Frobenius category, there is a functorial isomorphism $\tau\cong[d-1]$, where $\tau:\underline \calC\to\underline \calC$ is the Auslander-Reiten translation.

Applying Theorem \ref{thm:A} for the monomorphism category $\calS(\calC)$, we have the following computation: 
     \begin{align}
     \tau_\calS f &\cong\Mimo\overline{\tau_\calS f} \tag{Lemma~\ref{lem:mimo_stable}}\\ 
     &\cong\Mimo\tau\Rot\overline f . \tag{Lemma~\ref{lem:rot}} \end{align}

Therefore $\tau_\calS^2 f \cong \Mimo\tau\Rot\overline {\Mimo\tau\Rot\overline f}  
      \cong\Mimo\tau\Rot\tau\Rot\overline f$.
      
So by induction, 
 \begin{align*}
 \tau^6_\calS f&\cong\Mimo (\tau\Rot)^6\overline f\\
 &\cong\Mimo ([d-1]\Rot)^6\overline f\\
 &\cong\Mimo\Rot^6(\overline f[6d-6])\\
 &\cong\Mimo \overline f[6d-4]\\
 &\cong f\la 6d-4\ra.&  \tag{Prop.~\ref{prop:shift}}
 \end{align*}
This proves Theorem \ref{thm:B}.

\begin{rem}\label{rem:fCY}
Theorem  \ref{thm:B} suggests that the monomorphism category $\calS(\calC)$ of a stably $d$-Calabi-Yau Frobenius category $\calC$  has likely a fractional Calabi-Yau-dimension. Indeed, we can also compute $\tau_\calS f$~as:
 \begin{align*}
     \tau_\calS f &\cong\Mimo\overline{\tau_\calS f} \tag{Lemma~\ref{lem:mimo_stable}}\\ 
     &\cong\Mimo\tau\overline{\bcok f}  \\
     &\cong\Mimo(\overline{\bcok f}[d-1]) \\
      &\cong(\Mimo \overline{\bcok f})\la d-1\ra \tag{Prop.\ref{prop:shift}}\\ 
     &\cong (\Mimo\bcok f)\la d-1\ra \end{align*} 
Notice that $\Mimo\bcok\la d-1\ra$ is a functor $\overline{\calS(\calC)}\to \overline{\calS(\calC)}$. Although on many examples we can verify that $\Mimo\bcok\la d\ra$ is the Serre functor, it is still an open question for us to prove the functorial isomorphism $\tau_\calS \cong \Mimo\bcok\la d-1\ra$.  
\end{rem}

\section{Examples}
\subsection{The monomorphism power categories}

Let $\calC$ be an exact category. Inductively, set $\calS^0(\calC)=\calC$ and for a natural number $n\geq 1$ define the {\bf monomorphism power category} $\calS^n(\calC):=\calS(\calS^{n-1}(\calC))$. As an immediate consequence of Lemma \ref{lem:Chen}, we have the following:

\begin{prop}
Let $\calC$ be a Frobenius subcategory of $\m\modd$. Then the monomorphism category $\calS^n(\calC)$ is a Frobenius category.
\end{prop}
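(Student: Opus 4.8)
The plan is a straightforward induction on $n$, the only real content being to keep track of the ambient module categories so that Lemma~\ref{lem:Chen} can be applied at each stage. For $n=0$ there is nothing to prove, since $\calS^0(\calC)=\calC$ is Frobenius by hypothesis. For the inductive step one has $\calS^n(\calC)=\calS(\calS^{n-1}(\calC))$, so by Lemma~\ref{lem:Chen} it suffices to exhibit $\calS^{n-1}(\calC)$ as an extension-closed exact subcategory of $\Gamma\modd$ for some finite dimensional algebra $\Gamma$; granting this, the Frobenius property of $\calS^{n-1}(\calC)$ (the inductive hypothesis) forces that of $\calS^n(\calC)$.

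First I would fix notation for the ambient algebras: set $\Lambda_0=\m$ and $\Lambda_k=T_2(\Lambda_{k-1})$, each of which is again finite dimensional. Iterating the identification $\calH(\Gamma)\cong T_2(\Gamma)\modd$ gives $\calH^{k}(\m)\cong\Lambda_k\modd$, and under it the $k$-fold monomorphism category $\calS^{k}(\calC)$ becomes a full subcategory of $\calS^{k}(\m)\subseteq\Lambda_k\modd$, carrying the degreewise exact structure inherited from $\Lambda_k\modd$.

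Next I would prove, by an auxiliary induction on $k$, that $\calS^{k}(\calC)$ is extension-closed in $\Lambda_k\modd$. The step reduces to the following general fact: if $\calD$ is an extension-closed exact subcategory of $\Gamma\modd$, then $\calS(\calD)$ is extension-closed in $T_2(\Gamma)\modd$. Indeed, $\calS(\Gamma)$ is well known to be closed under extensions in $T_2(\Gamma)\modd$ (this is implicit in the proof of Theorem~\ref{thm:sc_ff}); and given a conflation $0\to(A_1\to A_2)\to(B_1\to B_2)\to(C_1\to C_2)\to 0$ in $\calS(\Gamma)$ whose end terms lie in $\calS(\calD)$, degreewise one obtains short exact sequences $0\to A_i\to B_i\to C_i\to 0$ in $\Gamma\modd$ with $A_i,C_i\in\calD$, whence $B_i\in\calD$ since $\calD$ is extension-closed; thus the middle term lies in $\calH(\calD)$, hence in $\calS(\calD)$, and the sequence is degreewise exact in $\calD$. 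So $\calS(\calD)$ is extension-closed in $\calS(\Gamma)$, and therefore in $T_2(\Gamma)\modd$. Applying this with $\Gamma=\Lambda_{k-1}$ and $\calD=\calS^{k-1}(\calC)$ completes the auxiliary induction.

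Finally, combining the two inductions: $\calS^{n-1}(\calC)$ is Frobenius by the main inductive hypothesis and is an extension-closed subcategory of $\Lambda_{n-1}\modd$ by the auxiliary one, so Lemma~\ref{lem:Chen}, applied with $\m$ replaced by $\Lambda_{n-1}$, yields that $\calS^{n}(\calC)=\calS(\calS^{n-1}(\calC))$ is a Frobenius category. I do not expect a genuine obstacle here: the whole argument is essentially the observation that the hypothesis ``extension-closed subcategory of a module category'' required by Lemma~\ref{lem:Chen} is preserved under forming monomorphism categories, after which the Frobenius property propagates automatically.
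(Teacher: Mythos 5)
Your overall route is the same as the paper's: the paper offers no written proof at all, presenting the statement as an immediate consequence of Lemma~\ref{lem:Chen}, and your double induction (on $n$ for the Frobenius property, and on $k$ for extension-closedness of $\calS^{k}(\calC)$ in $\Lambda_k\modd$ with $\Lambda_k=T_2(\Lambda_{k-1})$) is exactly the bookkeeping needed to make that ``immediate'' application legitimate. One step, however, is stated too loosely to be a valid inference. In this paper $\calS(\calD)$ consists of the \emph{inflations} of the exact category $\calD$, i.e.\ monomorphisms whose cokernel again lies in $\calD$, not of arbitrary $\Gamma$-module monomorphisms between objects of $\calD$; so from ``the middle term lies in $\calH(\calD)$'' together with ``$\calS(\Gamma)$ is extension-closed'' you may only conclude that the middle morphism $f_B$ is a monomorphism between objects of $\calD$, and the phrase ``hence in $\calS(\calD)$'' skips the needed cokernel condition. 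The gap is easily closed: applying the snake lemma to the degreewise short exact sequences gives an exact sequence $0\to\cok f_A\to\cok f_B\to\cok f_C\to 0$ in $\Gamma\modd$, and since $\cok f_A,\cok f_C\in\calD$ and $\calD$ is extension-closed, $\cok f_B\in\calD$, so $f_B$ is indeed an inflation of $\calD$ and the middle term lies in $\calS(\calD)$. With that one line added, your argument is complete and matches the intended proof.
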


\begin{exm}
Let $A=k[x]/\langle x^2\rangle$. We compute the Auslander-Reiten quiver for $\calS^2(A\modd)$.
\end{exm}

 Let $S$ be the unique simple module of $A$. By the construction of  $\calS^2(A\modd)$, we know that $\calS^2(A\modd)=\{ \rm{inflations \ in \ }\calS(A\modd)\}$. For the inflation $\phi: \left(\begin{smallmatrix}X\\Y\end{smallmatrix}\right)_{ f}\to \left(\begin{smallmatrix}X'\\Y'\end{smallmatrix}\right)_{ f'}$ which is indecomposable in $\calS^2(A\modd)$, we simply write it as
$\begin{matrix}{\begin{smallmatrix}X\\Y\end{smallmatrix}} \\ {\begin{smallmatrix}X'\\Y'\end{smallmatrix}}
\end{matrix}$, if $\phi$ is uniquely determined by $X, Y, X', Y'$. By theorem $A$, We can compute the Auslander-Reiten quiver for $\calS^2(A\modd)$ as follows:
 
\begin{figure}[h]
\begin{tikzpicture}[line width=0.4pt,scale=1.6]
\node (a1) at (2,1.7) {$ {\begin{matrix}{\begin{smallmatrix}0\\S\end{smallmatrix}} \\ {\begin{smallmatrix}0\\A\end{smallmatrix}}
\end{matrix}}$};
\node (a2) at (3,2) {$ {\begin{matrix}{\begin{smallmatrix}S\\S\end{smallmatrix}} \\ {\begin{smallmatrix}S\\A\end{smallmatrix}}
\end{matrix}}$};
\node (a3) at (4,1.7) {$ {\begin{matrix}{\begin{smallmatrix}S\\A\end{smallmatrix}} \\ {\begin{smallmatrix}S\\A\end{smallmatrix}}
\end{matrix}}$};
\node (a4) at (5,2) {$\begin{matrix}{\begin{smallmatrix}S\\A\end{smallmatrix}} \\ {{\begin{matrix}{\begin{smallmatrix}0\\S\end{smallmatrix}} \oplus {\begin{smallmatrix}A\\A\end{smallmatrix}}
\end{matrix}} }
\end{matrix}$};
\node (a5) at (6,1.7) {$ {\begin{matrix}{\begin{smallmatrix}0\\0\end{smallmatrix}} \\ {\begin{smallmatrix}S\\S\end{smallmatrix}}
\end{matrix}}$};

\node (a6) at (7,2) {$\begin{matrix}{\begin{smallmatrix}0\\S\end{smallmatrix}} \\ {{\begin{matrix}{\begin{smallmatrix}S\\S\end{smallmatrix}} \oplus {\begin{smallmatrix}0\\A\end{smallmatrix}}
\end{matrix}} }
\end{matrix}$};

\node (a7) at (8,1.7) {$ {\begin{matrix}{\begin{smallmatrix}0\\S\end{smallmatrix}} \\ {\begin{smallmatrix}0\\A\end{smallmatrix}}
\end{matrix}}$};

\node (b1) at (2,3) {$ {\begin{matrix}{\begin{smallmatrix}S\\S\end{smallmatrix}} \\ {\begin{smallmatrix}S\\S\end{smallmatrix}}
\end{matrix}}$};
\node (b2) at (3,2.7) {$ {\begin{matrix}{\begin{smallmatrix}0\\A\end{smallmatrix}} \\ {\begin{smallmatrix}0\\A\end{smallmatrix}}
\end{matrix}}$};
\node (b3) at (4,3){$\begin{smallmatrix}{\begin{smallmatrix}0\\0\end{smallmatrix}} \\ {\begin{smallmatrix}0\\S\end{smallmatrix}}\end{smallmatrix}$};

\node (b4) at (6,3) {$\begin{matrix}{\begin{smallmatrix}S\\A\end{smallmatrix}} \\ {{\begin{matrix}{\begin{smallmatrix}A\\A\end{smallmatrix}} \oplus {\begin{smallmatrix}0\\A\end{smallmatrix}}
\end{matrix}} }
\end{matrix}$};
\node (b5) at (8,3){$\begin{smallmatrix}\begin{smallmatrix}S\\S\end{smallmatrix} \\ \begin{smallmatrix}S\\S\end{smallmatrix}
\end{smallmatrix}$};

\node (c1) at (2,1) {$ {\begin{matrix}{\begin{smallmatrix}0\\0\end{smallmatrix}} \\ {\begin{smallmatrix}S\\A\end{smallmatrix}}
\end{matrix}}$};
\node (c2) at (4,1) {$ {\begin{matrix}{\begin{smallmatrix}S\\S\end{smallmatrix}} \\ {\begin{smallmatrix}A\\A\end{smallmatrix}}
\end{matrix}}$};
\node (c3) at (6,1) {$ {\begin{matrix}{\begin{smallmatrix}0\\S\end{smallmatrix}} \\ {\begin{smallmatrix}0\\S\end{smallmatrix}}
\end{matrix}}$};
\node (c4) at (8,1) {$ {\begin{matrix}{\begin{smallmatrix}0\\0\end{smallmatrix}} \\ {\begin{smallmatrix}S\\A\end{smallmatrix}}
\end{matrix}}$};

\node (d1) at (5,4) {$ {\begin{matrix}{\begin{smallmatrix}0\\0\end{smallmatrix}} \\ {\begin{smallmatrix}0\\A\end{smallmatrix}}
\end{matrix}}$}; 
\node (d2) at (7,4) {$ {\begin{matrix}{\begin{smallmatrix}A\\A\end{smallmatrix}} \\ {\begin{smallmatrix}A\\A\end{smallmatrix}}
\end{matrix}}$};
 
\node (e1) at (3,0) {$ {\begin{matrix}{\begin{smallmatrix}0\\0\end{smallmatrix}} \\ {\begin{smallmatrix}A\\A\end{smallmatrix}}
\end{matrix}}$};

\draw[->] (a1)--(a2);\draw[->] (a1)--(b2);
\draw[->] (a2)--(a3);\draw[->] (a2)--(c2);\draw[->] (a2)--(b3);\draw[->] (a3)--(a4);\draw[->] (a4)--(a5);\draw[->] (a4)--(b4);\draw[->] (a4)--(c3);\draw[->] (a5)--(a6);\draw[->] (a6)--(a7);
\draw[->] (a6)--(b5);\draw[->] (a6)--(c4);

 \draw[->] (b1)--(a2);\draw[->] (b2)--(a3); \draw[->] (b3)--(a4);\draw[->] (b3)--(d1); \draw[->] (b4)--(a6); \draw[->] (b4)--(d2); 
 \draw[->] (d1)--(b4); \draw[->] (d2)--(b5); 
\draw[->] (c1)--(a2);\draw[->] (c1)--(e1);\draw[->] (c2)--(a4);\draw[->] (c3)--(a6);

\draw[->] (e1)--(c2); 
\draw[dashed] (a1)--(a3)--(a5)--(a7); 
\draw[dashed] (a2)--(a4)--(a6); 
\draw[dashed] (b1)--(b3)--(b4)--(b5); 
\draw[dashed] (c1)--(c2)--(c3)--(c4);  
\draw[dashed] (2,4)--(b1)--(a1)--(c1)--(2,-0.3);
\draw[dashed] (8,4)--(b5)--(a7)--(c4)--(8,-0.3);
\end{tikzpicture}

\end{figure}

\newpage

 A finite dimensional Iwanaga-Gorenstein algebra $A$ is called stably $d$-Calabi-Yau, if the category of Gorenstein projective $A$-modules is a stably $d$-Calabi-Yau Frobenius category. A finite dimensional algebra $A$ is called $d$-Calabi-Yau tilted if it is isomorphic to the endomorphism algebra of some $d$-cluster tilting object in a $d$-Calabi-Yau triangulated category. Keller and Reiten showed that if $A$ is a 2-Calabi-Yau tilted algebras, then $A$ is 1-Iwanaga-Gorenstein and stably 3-Calabi-Yau \cite{KR}. Hence the subcategory of Gorenstein projective $A$-modules $\Gp(A)$ is a functorially finite and stably 3-Calabi-Yau Frobenius subcategory. Let $\Lambda=\begin{bmatrix} A&A\\0&A
 \end{bmatrix}$ be the upper triangular matrix algebra. Since $\m\modd\cong\calH(A)$, every $\Lambda$-module is an $A$-homomorphism $\left(\begin{smallmatrix}X\\Y
 \end{smallmatrix}\right)_f$ and $\Gp(\Lambda)\cong \calS(\Gp(A))$ (Theorem \ref{thm:t2a}).  
By Remark \ref{rem:fCY}, 
$
\tau_{Gp(\m)}\left(\begin{smallmatrix}X\\Y
 \end{smallmatrix}\right)_f \cong \Mimo\bcok \left(\begin{smallmatrix}X\\Y
 \end{smallmatrix}\right)_f \la2\ra.
$

\begin{exm}
Let $A$ be the following bounded quiver algebra with relation $\beta\delta, \delta\gamma, \gamma\beta$.
$$
\begin{tikzpicture}
\node (A) at (0,0) {$1$}; 
  \node(B) at (1,0) {$2$};
  \node (C) at (2.7,1) {$3$}; 
  \node (D) at (2.7,-1) {$4$};
  \draw[->] (B)--node[above]{$\alpha$}(A);
  \draw[->] (B)--node[above]{$\beta$}(C);
  \draw[->]  (C)--node[right]{$\gamma$}(D);
  \draw[->]  (D)--node[below]{$\delta$}(B);
\end{tikzpicture}
$$
The algebra $A$ is cluster-tilted, and hence  $2$-Calabi–Yau tilted. Therefore $\Gp(A)$ is a $3$-Calabi-Yau Frobenius category.
\begin{figure}[h]
\begin{tikzpicture}
\node[red] (a1) at (2,5) {$4$};
\node[red] (a2) at (4,5) {$3$};
\node[red] (a3) at (6,5) {$rP_4$};
\node (a4) at (8,5) {$I_2$};
\node[red] (b1) at (5,4) {$P_2$};
\node (b2) at (7,4) {$2$};
\node[red] (b3) at (9,4) {$4$};
\node[red] (p1) at (3,6) {$P_3$};
\node[red] (p2) at (7,6) {$P_4$};
\node[red] (q1) at (4,3) {$1$};
\node (q2) at (6,3) {$I_3$};
\draw[->] (a2)--(b1);\draw[->] (a3)--(b2);\draw[->] (a4)--(b3);
\draw[->] (b1)--(a3);\draw[->] (b2)--(a4); 
\draw[->] (a1)--(p1);\draw[->] (a3)--(p2);\draw[->] (p1)--(a2);\draw[->] (p2)--(a4);
\draw[->] (q1)--(b1);\draw[->] (b1)--(q2); \draw[->] (q2)--(b2);
\draw[dashed] (a1)--(a2)--(a3)--(a4);
\draw[dashed] (b1)--(b2)--(b3);\draw[dashed] (q1)--(q2);
\draw[dashed] (2,6)--(a1)--(2,3);\draw[dashed] (9,6)--(b3)--(9,3);
\end{tikzpicture}
\caption{Auslander-Reiten quiver of $A\modd$
}\label{fig:2cy}
\end{figure}
In Figure \ref{fig:2cy},
$P_2=\begin{smallmatrix}&2\\1&&3\end{smallmatrix}$, 
$P_3=\begin{smallmatrix}3\\4\end{smallmatrix}$,
$P_4=\begin{smallmatrix}4\\2\\1\end{smallmatrix}$,
$rP_4=\begin{smallmatrix}2\\1\end{smallmatrix}$,
$I_2=\begin{smallmatrix}4\\2\end{smallmatrix}$,
$I_3=\begin{smallmatrix}2\\3\end{smallmatrix}$ and $\Gp(A)$ are marked red.
The monomorphism category $\calS(\Gp(A))$ contains $17$ indecomposable objects.

\begin{figure}[h]
\begin{tikzpicture}
\node (a1) at (2,5) {$\begin{smallmatrix}3\\P_2\end{smallmatrix}$};
\node (a2) at (4,5) {$\begin{smallmatrix}rP_4\\rP_4\end{smallmatrix}$};
\node (a3) at (6,5) {$\begin{smallmatrix}0\\4\end{smallmatrix}$};
\node (a4) at (8,5) {$\begin{smallmatrix}4\\P_3\end{smallmatrix}$};
\node (a5) at (10,5) {$\begin{smallmatrix}3\\3\end{smallmatrix}$};
\node (b1) at (3,4) {$\begin{smallmatrix}0\\rP_4\end{smallmatrix}$};
\node (b2) at (5,4) {$\begin{smallmatrix}rP_4\\P_4\end{smallmatrix}$};
\node (b3) at (7,4) {$\begin{smallmatrix}4\\4\end{smallmatrix}$};
\node (b4) at (9,4) {$\begin{smallmatrix}0\\3\end{smallmatrix}$};
\node (b5) at (11,4) {$\begin{smallmatrix}3\\P_2\end{smallmatrix}$};
\node (p1) at (3,6) {$\begin{smallmatrix}P_2\\P_2\end{smallmatrix}$};
\node (p2) at (7,6) {$\begin{smallmatrix}0\\P_3\end{smallmatrix}$};
\node (p3) at (9,6) {$\begin{smallmatrix}P_3\\P_3\end{smallmatrix}$};
\node (q1) at (4,3) {$\begin{smallmatrix}0\\P_4\end{smallmatrix}$};
\node (q2) at (6,3) {$\begin{smallmatrix}P_4\\P_4\end{smallmatrix}$};
\node (q3) at (10,3) {$\begin{smallmatrix}0\\P_2\end{smallmatrix}$};
\node(t1) at (11,2)  {$\begin{smallmatrix}1\\1\end{smallmatrix}$};
\draw[->] (a1)--(b1);\draw[->] (a2)--(b2);  \draw[->] (a3)--(b3);  \draw[->] (a4)--(b4);  \draw[->] (a5)--(b5);   
\draw[->](b1)--(a2);\draw[->](b2)--(a3);\draw[->](b3)--(a4);\draw[->](b4)--(a5);
\draw[->] (b1)--(q1);\draw[->] (b2)--(q2);\draw[->] (b4)--(q3);
\draw[->] (q1)--(b2);\draw[->] (q2)--(b3);\draw[->] (q3)--(b5);
\draw[->] (a1)--(p1);\draw[->] (a3)--(p2);\draw[->] (a4)--(p3);
\draw[->] (p1)--(a2);\draw[->] (p2)--(a4);\draw[->] (p3)--(a5);
\draw[dashed] (a1)--(a2)--(a3)--(a4)--(a5)--(11,5);
\draw[dashed] (2,4)--(b1)--(b2)--(b3)--(b4)--(b5);
\node (d1) at (2,7) {$\begin{smallmatrix}1\\1\end{smallmatrix}$};
\node (d2) at (9,2) {$\begin{smallmatrix}0\\1\end{smallmatrix}$};
\draw[->] (d1)--(p1); \draw[->] (d2)--(q3); \draw[->] (d2)--(t1);
\draw[dashed] (2,6)--(a1)--(2,3);
\draw[dashed] (11,6)--(b5)--(11,3);
\end{tikzpicture}
\caption{Auslander-Reiten quiver of $\calS(\Gp(A))$
}
\end{figure}
\end{exm}

 \newpage

{\bf Acknowledgments:} We would like to thank  Rasool Hafezi, Sondre Kvamme, Claus Michael Ringel and Markus Schmidmeier for helpful discussions and comments.



\end{document}